\documentclass[leqno]{amsart}%
% Dejan's Packages
\usepackage{hyperref}
\usepackage{pdfsync}
\usepackage{dsfont}
\usepackage{color}
\usepackage{amsthm}
\usepackage{algorithm}
\usepackage{algorithmic}
% end of Dejan's packages
%\usepackage{makeidx}
\usepackage{braket}
\usepackage{bm}
\usepackage{amsmath}
\usepackage{graphicx}
\usepackage{caption}
\usepackage{subcaption}
\usepackage{pb-diagram}
\usepackage{amsfonts}
\usepackage{amssymb}%
\setcounter{MaxMatrixCols}{30}
%TCIDATA{OutputFilter=latex2.dll}
%TCIDATA{Version=5.50.0.2960}
%TCIDATA{CSTFile=LaTeX article (bright).cst}
%TCIDATA{Created=Monday, August 27, 2007 13:53:03}
%TCIDATA{LastRevised=Monday, September 27, 2010 08:35:43}
%TCIDATA{<META NAME="GraphicsSave" CONTENT="32">}
%TCIDATA{<META NAME="SaveForMode" CONTENT="1">}
%TCIDATA{BibliographyScheme=Manual}
%TCIDATA{<META NAME="DocumentShell" CONTENT="General\SW\Blank - Standard LaTeX Article">}
%TCIDATA{Language=American English}
%BeginMSIPreambleData
\providecommand{\U}[1]{\protect\rule{.1in}{.1in}}
%EndMSIPreambleData
\numberwithin{equation}{section}
\newtheorem{theorem}{Theorem}[section]

\newtheorem{corollary}[theorem]{Corollary}

\newtheorem{definition}[theorem]{Definition}

\newtheorem{lemma}[theorem]{Lemma}

\newtheorem{proposition}[theorem]{Proposition}

\def\Xint#1{\mathchoice {\XXint\displaystyle\textstyle{#1}}
{\XXint\textstyle\scriptstyle{#1}}
{\XXint\scriptstyle\scriptscriptstyle{#1}}
{\XXint\scriptscriptstyle\scriptscriptstyle{#1}} \!\int}
\def\XXint#1#2#3{{\setbox0=\hbox{$#1{#2#3}{\int}$} \vcenter{\hbox{$#2#3$}}\kern-.5\wd0}}

\def\dashint{\Xint-}

\theoremstyle{remark}
% This sets the font so that definitions etc are not in italics
\newtheorem{remark}[theorem]{Remark}
\newcommand{\R}{\mathbb{R}}
\newcommand{\N}{\mathbb{N}}

\newcommand{\x}{x}
\newcommand{\uu}{\mathbf{u}}
\newcommand{\z}{\mathbf{z}}

\newcommand{\te}{\textrm}

\newcommand{\veps}{\varepsilon}

\DeclareMathOperator{\id}{Id}
\DeclareMathOperator{\supp}{supp}

\DeclareMathOperator{\divergence}{div}

\DeclareMathOperator{\Proj}{Proj}

\DeclareMathOperator{\Lip}{Lip}
\DeclareMathOperator{\Span}{Span}

\definecolor{mygreen}{rgb}{0.1,0.75,0.2}

\newcommand{\nc}{\normalcolor}

\newenvironment{list1}
  {\begin{list}
 {\textsc{\arabic{broj1}.} }
 {\usecounter{broj1}
  \setlength{\itemindent}{-1pt}
  \setlength{\listparindent}{-1pt}
  \setlength{\itemsep}{0pt}}
  \setlength{\labelwidth}{30pt}
  \setlength{\parsep}{0pt}
  }
  {\end{list}}

\addtolength{\textwidth}{72pt}
\addtolength{\oddsidemargin}{-36pt}
\addtolength{\evensidemargin}{-36pt}
\addtolength{\textheight}{10pt}
\addtolength{\topmargin}{-30pt}

\title{A variational approach to the consistency of spectral clustering}
\author{Nicol\'as Garc\'ia Trillos and  Dejan Slep\v{c}ev}

\keywords{spectral clustering, graph Laplacian, point cloud, discrete to continuum limit, Gamma-convergence, Dirichlet energy,  random geometric graph }
\subjclass{49J55, 49J45, 60D05, 68R10, 62G20}
% 	49J55  	Problems involving randomness [See also 93E20]
% 	49J45  	Methods involving semicontinuity and convergence; relaxation
% 	68R10  		Discrete mathematics in relation to computer science: Graph theory
% 	62G20  		Nonparametric inference: Asymptotic properties
%     60D05     Geometric probability and stochastic geometry

\begin{document}
\newcounter{broj1}
\date{\today}
\maketitle

\begin{abstract}
This paper establishes the consistency of spectral approaches to data clustering. 
We consider clustering of point clouds obtained as samples of a ground-truth measure. 
A graph representing the point cloud is obtained by assigning weights to edges based on the distance between the points they connect. 
We investigate the spectral convergence of both unnormalized and normalized graph Laplacians towards the appropriate operators in the continuum domain.
We obtain sharp conditions on how the connectivity radius can be scaled with respect to the number of sample points for the spectral convergence to hold. 
 We also show that the discrete clusters obtained via spectral clustering converge towards a continuum partition  of the ground truth measure. Such continuum partition minimizes a functional describing the continuum analogue of the graph-based spectral partitioning. 
Our approach, based on variational convergence, is general and flexible.
\end{abstract}

\section{Introduction}

Clustering is one of the basic problems of  statistics and machine learning: having a collection of $n$ data points
and a measure of their pairwise similarity the task is to partition the data into $k$ meaningful groups.  
There is a variety of criteria for the quality of partitioning and a plethora of clustering algorithms, overviewed in \cite{Fort10, Scha07, XuWuClusSurv, YanLes15}. 
Among most widely used are centroid based (for example the $k$-means algorithm),  agglomeration based (or hierarchical) and graph based ones.  Many graph partitioning approaches are based on dividing the graph representing the data into clusters of balanced sizes which have as few as possible edges between them \cite{ACPP12, ARV, BalancedCutHein, ShiMalik, SpiTen13, SzlamBresson, WeiChe89}.  
 Spectral clustering is a relaxation of minimizing graph cuts, which in any of its variants, \cite{JordanNg, ShiMalik, vonLux_tutorial},  consists of two steps.  The first step is the embedding step where data points are mapped to a euclidean space by using the spectrum of a \textit{graph Laplacian}. In the second step, the actual clustering is obtained by applying a clustering algorithm like $k$-means to the transformed points.  

The input of a spectral clustering algorithm is a weight matrix $W$ which captures the similarity relation between the data points. Typically, the choice of edge weights depends on the distance between the data points and a parameter $\veps$ which determines the length scale over which points are connected. We assume that the data set is a random sample of an underlying ground-truth measure. We investigate the convergence of spectral clustering as the number of available data points goes to infinity.  
   
For any given clustering procedure, a natural and important question is whether the procedure is consistent. That is, if it is true that as more data is collected, the partitioning of the data into groups obtained converges to some meaningful partitioning in the limit. Despite the abundance of clustering procedures in the literature, not many results establish their consistency in the nonparametric setting, where the data is assumed to be obtained from a unknown general distribution. 
Consistency of $k$-means clustering was established by Pollard \cite{pollard1981strong}.
%\red What else could we cite??\nc
Consistency of $k$-means clustering for paths with regularization was recently studied by Thorpe, Theil and Cade \cite{ThoTheCad15}, using a similar viewpoint to those of this paper.
Consistency for a class of single linkage clustering algorithms was shown by Hartigan \cite{Hartigan1981}.
Arias-Castro and Pelletier have proved the consistency of maximum variance unfolding \cite{ACPell13}.
Pointwise estimates between graph Laplacians and the continuum operators were studied by
Belkin and Niyogi \cite{bel_niy_LB},   % pointwise manifold LB, 
Coifman and Lafon \cite{coifman1},
Gin\'e and Koltchinskii \cite{GK}, Hein, Audibert and von Luxburg \cite{hein_audi_vlux05}, and %on weighted graph Laplacian on manifolds,  
Singer \cite{Singer}.
Spectral convergence was studied in the works of Ting, Huang, and  Jordan \cite{THJ}, Belkin and Niyogi \cite{belkin2007convergence} on the convergence of Laplacian eigenmaps, von Luxburg, Belkin and Bousquet on graph Laplacians, and of  Singer and Wu \cite{SinWu13} on connection graph Laplacian. 
The convergence of the eigenvalues and eigenvectors these works obtain is of great relevance to machine learning. However obtaining practical and rigorous rates at which the connectivity length scale $\veps_n \to 0$ as $n \to \infty$ remained an open problem.
Also relevant to point cloud analysis are studies of Laplacians on discretized manifolds by Burago, Ivanov and Kurylev  \cite{BIK} who obtain precise error estimates for eigenvalues and eigenvectors.
% and the consistency of the spectral clustering  by Belkin and Nyogi \cite{belkin2007convergence} and von Luxburg, Belkin, and Bousquet\cite{vonLuxburg}. 

Recently the authors in \cite{GTS}, and together with Laurent, von Brecht and Bresson in\cite{CheegerRatioConsistency},
introduced a framework for showing the consistency of clustering algorithms based on minimizing an objective functional on graphs. In \cite{CheegerRatioConsistency} they applied the technique to Cheeger and Ratio cuts. 
%
%Some of the consistency results related to graph Laplacians  are due to Belkin and Niyogi \cite{bel_niy_LB},   % pointwise manifold LB, 
%Gin\'e and Koltchinskii \cite{GK}, Hein, Audibert and von Luxburg \cite{hein_audi_vlux05}, and %on weighted graph Laplacian on manifolds,  
%Singer \cite{Singer}. All such works establish consistency of graph Laplacians in a pointwise setting. On the other hand, the works of Ting, Huang, and  Jordan \cite{THJ}, Belkin and Niyogi \cite{belkin2007convergence} on the convergence of Laplacian eigenmaps, and von Luxburg, Belkin and Bousquet on consistency of spectral clustering \cite{vonLuxburg}, consider spectral convergence and thus convergence of eigenvalues and eigenvectors, relevant for machine learning. An important difference between our work and the works previously mentioned is that in them, there is no explicit rate at which $\veps_n$ (the length scale used to construct the graph) is allowed to converge to $0$ as $n \to \infty$. 
%
Here the framework of \cite{GTS, CheegerRatioConsistency} is used to prove new results on consistency of spectral clustering, which establish the (almost) optimal rate at which the connectivity radius $\veps$ can be taken to $0$ as $n \to \infty$.
We prove the convergence of  the spectrum of the graph Laplacian towards the spectrum of a corresponding continuum operator. An important element of our work is that we
establish the convergence of the discrete clusters obtained via spectral clustering to their continuum counterparts. That is, as the number of data points $n \to \infty$ the discrete clusters (obtained via spectral clustering) are show to converge towards continuum objects (measures), which themselves are obtained via a clustering procedure in the continuum setting (performed on the ground truth measure). That is, the discrete clusters are shown to converge to continuum clusters obtained via spectral clustering procedure with full information (ground truth measure) available.  \nc
 We obtain results for unnormalized
(Theorem \ref{ConvergenceSpectrumTheorem}),
 and normalized (Theorems 
\ref{ConvergenceSpectrumTheoremNormalized} and \ref{ConvergenceSpectrumTheoremNormalizedRW}) graph Laplacians. 
 % In order to show that such convergence holds, we use a different approach to the one in \cite{vonLuxburg} where operator perturbation theory was used.
 The bridge connecting the spectrum of the graph Laplacian and the spectrum of a limiting operator in the continuum is built by using the notion of variational convergence known as $\Gamma$-convergence. The setting of $\Gamma$-convergence, combined with techniques of optimal transportation, provides an effective viewpoint to address a range of consistency and stability problems based on minimizing objective functionals on a random sample of a measure.

\subsection{Description of spectral clustering}
\label{sec:DescriptionSpectralClustering}
Let $V= \{\x_1, \dots, \x_n\}$ be a set of vertices  and let $W \in \R^{n \times n}$ be a symmetric matrix with non-negative entries. We define $\mathcal{D}\in \R^{n \times n}$, the \emph{degree matrix} of the weighted graph $(V, W)$, to be the diagonal matrix with $\mathcal{D}_{ii}= \sum_{j}W_{i,j}$ for every $i$. Also, we define $L$, the  \emph{unnormalized graph Laplacian} matrix of the weighted graph $(V,W)$, to be
\begin{equation} \label{GL}
 L :=  \mathcal{D}- W.  
\end{equation}
We also consider the matrices $N^{sym}$ and $N^{rw}$ given by
$$ N^{sym}:= \mathcal{D}^{-1/2} L \mathcal{D}^{-1/2}, \quad N^{rw}:= \mathcal{D}^{-1} L, $$
both of which we refer to as \emph{normalized graph Laplacians}. The superscript $sym$ indicates the fact that $N^{sym}$ is symmetric, whereas the superscript $rw$ indicates the fact that $N^{rw}$ is connected to the transition probabilities of a random walk that can be defined on the graph. Each of the matrices $L, N^{sym}, N^{rw}$ is used in a version of spectral clustering. The so called unnormalized spectral clustering uses the spectrum of the unnormalized graph Laplacian to embed the point cloud into a lower dimensional space, typically a method like $k$-means on the embedded points then provides the desired clusters (see \cite{vonLux_tutorial}). This is Algorithm \ref{alg1} below. 
\begin{algorithm*}                      % enter the algorithm environment
\caption{Unnormalized spectral clustering}          % give the algorithm a caption
\label{alg1}
\begin{itemize}
\item[]\textbf{Input:} Number of clusters $k$ and similarity matrix $W$.
\item[-] Construct the unnormalized graph Laplacian $L$.
\item[-] Compute the eigenvectors $u_1, \dots,u_k$ of $L$ associated to the $k$ smallest (nonzero) eigenvalues of $L$. 
\item[-] Define the matrix $U \in \R^{k\times n}$, where the $i$-th row of $U$ is the vector $u_i$.
\item[-] For $i=1,\dots,n$, let $y_i \in \R^k$ be the $i$-th column of $U$. 
\item[-] Use the $k$-means algorithm to partition the set of points $\left\{y_1,\dots, y_n  \right\}$  into $k$ groups, that we denote by $G_1,\dots, G_k$.  
\item[]\textbf{Output:} Clusters $G_1,\dots, G_k$.  
\end{itemize}
% and a label for \ref{} commands later in the document
%and a label for \ref{} commands later in the document
\end{algorithm*}

In the same spirit, the normalized graph Laplacians are used. An algorithm for normalized spectral clustering using $N^{sym}$ was introduced in \cite{JordanNg} (see Algorithm \ref{alg2}), and an algorithm using $N^{rw}$ was introduced in \cite{ShiMalik} (see Algorithm \ref{alg3}).  

\begin{algorithm*}                      % enter the algorithm environment
\caption{Normalized spectral clustering as defined in \cite{JordanNg}}          % give the algorithm a caption
\label{alg2}
\begin{itemize}
\item[]\textbf{Input:} Number of clusters $k$ and similarity matrix $W$.
\item[-] Construct the normalized graph Laplacian $N^{sym}$.
\item[-] Compute the eigenvectors $u_1, \dots,u_k$ of $N^{sym}$ associated to the $k$ smallest (nonzero) eigenvalues of $N^{sym}$. 
\item[-] Define the matrix $U \in \R^{k\times n}$, where the $i$-th row of $U$ is the vector $u_i$.
\item[-] Construct the matrix $V$ by normalizing the columns of $U$ so that the columns of $V$ have all euclidean norm equal to one.  
\item[-] For $i=1,\dots,n$, let $y_i \in \R^k$ be the $i$-th column of $V$. 
\item[-] Use the $k$-means algorithm to partition the set of points $\left\{y_1,\dots, y_n  \right\}$  into $k$ groups that we denote by $G_1,\dots, G_k$.  
\item[]\textbf{Output:} Clusters $G_1,\dots, G_k$.  
\end{itemize}
% and a label for \ref{} commands later in the document
%and a label for \ref{} commands later in the document
\end{algorithm*}

\begin{algorithm*}                      % enter the algorithm environment
\caption{Normalized spectral clustering as defined in \cite{ShiMalik}}          % give the algorithm a caption
\label{alg3}
Same as Algorithm 1 but using the normalized graph Laplacian $N^{rw}$ instead of $L$.
%\begin{itemize}
%\item[]\textbf{Input:} Number of clusters $k$ and similarity matrix $W$.
%\item[-] Construct the normalized graph Laplacian $N^{rw}$.
%\item[-] Compute the eigenvectors $u_1, \dots,u_k$ of $N^{rw}$ associated to the $k$ smallest (nonzero) eigenvalues of $N^{rw}$. 
%\item[-] Define the matrix $U \in \R^{k\times n}$, where the $i$-th row of $U$ is the vector $u_i$.
%\item[-] For $i=1,\dots,n$, let $y_i \in \R^k$ be the $i$-th column of $U$. 
%\item[-] Use the $k$-means algorithm to partition the set of points $\left\{y_1,\dots, y_n  \right\}$  into $k$ groups that we denote by $G_1,\dots, G_k$.  
%\item[]\textbf{Output:} Clusters $G_1,\dots, G_k$.  
%\end{itemize}
\end{algorithm*}
Spectral properties of graph Laplacians have connections to balanced graph cuts. For example, the spectrum of $N^{rw}$ is shown to be connected to the Ncut problem, whereas the spectrum of $L$ is connected to RatioCut (see \cite{vonLux_tutorial}). A probabilistic interpretation of the spectrum of $N^{rw}$ may be found in \cite{DiffMapsProbabilistic}. In addition, connections between normalized graph Laplacians, data parametrization and dimensionality reduction via diffusion maps are developed in \cite{DiffusionMapsCoarseGraining}.
% We present two different normalized graph clustering algorithms one of which uses $N^{sym}$ and is proposed in \cite{JordanNg} (see Algorithm \ref{alg2} below), and the other one which uses $N^{rw}$ and is proposed in \cite{ShiMalik} (see Algorithm \ref{alg3} below).

We now present some facts about the matrices $L, N^{sym}$ and $N^{rw}$, all of which may be found in \cite{vonLux_tutorial}. First of all $L$ is a positive semidefinite symmetric matrix. In fact for every vector $u \in \R^n$
\begin{equation}
 \langle Lu, u \rangle= \frac{1}{2} \sum_{i,j}W_{i,j}(u_i - u_j)^2,
 \label{LaplacianAndEnergy0}
\end{equation}
where on the left hand side we are using the usual inner product in $\R^n$. The smallest eigenvalue of $L$ is equal to zero, and its multiplicity is equal to the number of connected components of the weighted graph. The matrix $N^{sym}$ is symmetric and positive semidefinite as well. Moreover, for every $u \in \R^n$
\begin{equation}
 \langle N^{sym}u, u \rangle= \frac{1}{2} \sum_{i,j}W_{i,j}\left(\frac{u_i}{\mathcal{D}_{ii}} - \frac{u_j}{\mathcal{D}_{jj}}\right)^2.
 \label{LaplacianAndEnergyN0}
\end{equation}
In addition, $0$ is an eigenvalue of $N^{sym}$, with multiplicity equal to the number of connected components of the weighted graph. The vector $\mathcal{D}^{1/2} \textbf{1}$ (where $\textbf{1}$ is the vector with all entries equal to one) is an eigenvector of $N^{sym}$ with eigenvalue $0$. 

The two forms of normalized graph Laplacians are closely related due to the correspondence between the spectruma of $N^{sym}$ and $N^{rw}$. In fact, it is straightforward to show that
\begin{equation}
N^{rw}u = \lambda u \quad \text{if and only if} \quad N^{sym} w = \lambda w, \quad \text{where } w = \mathcal{D}^{1/2}u.
\label{ConditionNsymNrw}
\end{equation}
That is, $N^{sym}$ and $N^{rw}$ have the same eigenvalues, and there is a simple relation between their corresponding eigenvectors.

\subsection{Spectral clustering of point clouds.}
Let $V= \{\x_1, \dots, \x_n\}$ be a point cloud in $\R^d$. To give a weighted graph structure to the set $V$, we consider a kernel $\eta$, that is, we consider $\eta : \R^d \to [0, \infty)$ a radially symmetric, radially decreasing function decaying to zero sufficiently fast. The kernel is appropriately rescaled to take into account data density. In particular, let $\eta_\veps$ depend on the length scale $\veps$ where we take $\eta_\veps: \R^d \rightarrow \R $ to be defined by
$$\eta_\veps(z) := \frac{1}{\veps^d}\eta\left(\frac{z}{\veps} \right).$$
In this way we impose that significant weight is given to edges connecting points up to distance $\veps$. We consider the similarity matrix $W^\veps$ defined by
\begin{equation}
\label{weights} 
W^\veps_{i,j} = \eta_\veps(\x_i - \x_j).
\end{equation}
%\red Nicolas, we should discuss if we want $W_{i,i} > 0$ or not.  \nc % ??
%and set $W^\veps_{i,i}=0$ for all $i$. \nc
We denote by $\mathcal{L}_{n, \veps}$ the unnormalized graph Laplacian \eqref{GL} of the weighted graph $(V,W^\veps)$, that is
\begin{equation} \label{GLen}
\mathcal{L}_{n, \veps} = \mathcal{D^\veps} - W^\veps 
\end{equation}
where $\mathcal D^\veps$ is the diagonal matrix with $\mathcal D^\veps_{i,i} = \sum_j W^\veps_{i,j}$. 

We define the \emph{Dirichlet energy on the graph} of a function $u : V \rightarrow \R$ to be
\begin{equation}  \sum_{i,j}W^\veps_{i,j}(u(\x_i) - u(\x_j))^2.
\label{Perimeter}
\end{equation}
 The fact that $\eta$ is a symmetric function guarantees that $W$ is symmetric and thus all the facts presented in Subsection \ref{sec:DescriptionSpectralClustering} apply. In particular, \eqref{LaplacianAndEnergy0} can be stated as: for every function $u : V \rightarrow \R$
\begin{equation}
 \langle \mathcal{L}_{n, \veps}u, u \rangle= \frac{1}{2} \sum_{i,j}W^\veps_{i,j}(u(\x_i) - u(\x_j))^2,
 \label{LaplacianAndEnergy}
\end{equation}
where on the left hand side we have identified the function $u$ with the vector $(u(\x_1), \dots, u(\x_n))$ in $\R^n$ and where $\langle \cdot, \cdot \rangle$ denotes the usual inner product in $\R^n$. 
%We use this identification in the remainder of the paper without further mention. 

The symmetric normalized graph Laplacian   $ \mathcal{N}^{sym}_{n , \veps} $  is given by
$$  \mathcal{N}_{n , \veps}^{sym} := \mathcal{D}^{ -1/2} \mathcal{L}_{n , \veps} \mathcal{D} ^{-1/2}.  $$

 Since the kernel $\eta$ is assumed radially symmetric, it can be defined as $\eta(x) := \bm{\eta}(|x|)$ for all $x \in \R^d$, where  $\bm{\eta}: [0, \infty) \rightarrow [0, \infty)$ is the radial profile.  We assume the following properties on $\bm{\eta}$:
\begin{itemize}
\item[\textbf{(K1)}] $\bm{\eta}(0)>0$ and $\bm{\eta}$ is continuous at $0$.  
\item[\textbf{(K2)}] $\bm{\eta}$ is non-increasing.
\item[\textbf{(K3)}] The integral $\int_{0}^{\infty} \bm{\eta}(r) \, r^{d+1 } dr $ is finite.
\end{itemize}
\begin{remark}
We remark that the last assumption on $\bm{\eta}$ is equivalent to imposing that the \emph{surface tension}
 \begin{equation} \label{sigma_eta}
 \sigma_\eta := \int_{\R^d} \eta(h)|h_1|^2dh  
\end{equation}
 is finite, where $h_1$ represents the first component of $h$. The second condition implies that more relevance is given to the interactions between points that are close to each other. We notice that the class of acceptable kernels is quite broad and includes both Gaussian kernels and discontinuous kernels like one defined by a function $\bm{\eta}$ of the form $\bm{\eta}=1$ for $t \leq 1$ and $\bm{\eta}=0$ for $t>1$. 
\end{remark}

\medskip

We focus on point clouds that are obtained as independent samples from a given distribution $\nu$. Specifically, consider an open, bounded, and connected set $D \subset \R^d$ with Lipschitz boundary (i.e. locally the graph of a Lipschitz function) and consider a probability measure $\nu$ supported on $\overline {D}$. We assume $\nu$ has a continuous density $\rho$, which is bounded above and below by positive constants on $D$.
We assume that the points $\x_1, \dots , \x_n$ (i.i.d. random points) are chosen according to the distribution $\nu$. We consider the graph with nodes $V=\{\x_1, \dots, \x_n\}$ and edge weights $\left\{W^\veps_{i,j} \right\}_{i,j}$ defined in \eqref{weights}.
For an appropriate scaling of $\veps:=\veps_n$ with respect to $n$,  we study the limiting behavior of the eigenvalues and eigenvectors of the graph Laplacians as $n \to \infty$. We now describe the continuum problems which characterize the limit.

%%%%%%%%%%%%%%%%%%%%%%%%%%%%%%%%%%%%%%%%%%%%%%%
\subsection{Description of spectral clustering in the continuum setting: the unnormalized case} \label{DescContClustU}

Let domain $D$, "ground-truth" measure $\nu$ with density $\rho$ be as above. 
The object that characterizes the limit of the graph Laplacians $\mathcal{L}_{n, \veps_n}$ as $n \rightarrow \infty$ is the differential operator:
\begin{equation} \label{defL}
\mathcal{L}:u \mapsto - \frac{1}{\rho} \divergence(\rho^2 \nabla u).
\end{equation}
We consider the pairs $\lambda\in \R$ and $u \in H^1(D)$ (the Sobolev space of $L^2(D)$ functions with distributional derivative $\nabla u$ in $L^2(D, \R^d)$), with $u$ not identically equal to zero, such that
\begin{alignat}{2}  \label{PDELimitFunctional0} 
\begin{aligned}
 \mathcal{L}u & =  \lambda u ,  \quad &  &  \te{in } D, \\
  \frac{\partial u}{\partial \bf{n}} & =0, &  & \te{on }  \partial D.
\end{aligned}
\end{alignat}
%
%
%In fact, we consider the values $\lambda \in \R$ for which there exists nontrivial solutions $u: D \rightarrow \R$ to the PDE
%\begin{equation}
%\label{PDELimitFunctional0} 
% \left\{ \begin{array}{l}
% - \frac{1}{\rho}\divergence(\rho^2 \nabla u  )=  \lambda u , \:  \te{in} \:  D 
%\\ \frac{\partial u}{\partial \bf{n}} =0, \: \te{on} \: \partial D
%\end{array} \right.,
%\end{equation}
A function $u$ as above is said to be an eigenfunction of $\mathcal{L}$ with corresponding eigenvalue $\lambda \in \R$. 
In Subsection \ref{SectionSpectrumL} we discuss the precise definition of a solution of 
 \eqref{PDELimitFunctional0} and present some facts about it. In particular  $\mathcal{L}$ is a positive semidefinite self-adjoint operator with respect to the inner product $\langle \cdot, \cdot \rangle_{L^2(D, \nu)}$ and has a discrete spectrum that can be arranged as an increasing sequence converging to infinity
 $$  0 = \lambda_1 \leq \lambda_2 \leq \dots ,$$
where each eigenvalue is repeated according to (finite) multiplicity. Furthermore, there exists a orthonormal basis of $L^2(D)$ (with respect to the inner product $\langle\cdot ,\cdot \rangle_{L^2(D, \nu)}$) consisting of eigenfunctions $u_i$ of $\mathcal{L}$. 

Given a mapping $\Phi: D \longrightarrow \R^k$ by $\Phi_\sharp \nu$ we denote the push forward of the measure $\nu$, namely the measure for which $\Phi_\sharp \nu(A) = \nu(\Phi^{-1}(A))$, for any Borel set $A$.
The continuum spectral clustering analogous to the discrete one of Algorithm 1 is as follows. 
Let $u_1,\dots, u_k : D \to \R$ be the orthonormal set of eigenfunctions corresponding to eigenvalues 
$\lambda_1, \dots, \lambda_k$. Consider the measure $\mu = (u_1, \dots, u_k)_\sharp \nu$. 
Let  $\tilde G_i \subset \R^k$  be the clusters obtained by k-means clustering of $\mu$.
Then $G_i = (u_1, \dots, u_k)^{-1}(\tilde G_i)$ for $i=1, \dots, k$ define  the \emph{spectral clustering} of $\nu$. 

%%%%%%%%%%%%%%%%%%%%%%%%%%%%%%%%%%%%%%%%%%%%%%%%%%%
\subsection{Description of spectral clustering in the continuum setting: the normalized cases} \label{DescContClustN}

The object that characterizes the limit of the symmetric normalized graph Laplacians $\mathcal{N}^{sym}_{n, \veps_n}$ as $n \rightarrow \infty$ is the differential operator 
$$\mathcal{N}^{sym}: u \mapsto - \frac{1}{\rho^{3/2}}\divergence \left(\rho^2 \nabla \left( \frac{u}{\sqrt{\rho}}\right)  \right).$$
We consider the space
\begin{equation}
  H^1_{\sqrt{\rho}}(D) := \left\{ u \in L^2(D) \: : \:  \frac{u}{\sqrt{\rho}} \in H^1(D)  \right\}. 
\label{H1sqrtrho}
\end{equation}
The spectrum of $\mathcal{N}^{sym}$ is the set of pairs $\tau \in \R$ and $u \in H^1_{\sqrt{\rho}}(D)$, where $u$ is not identically equal to zero, such that 
\begin{alignat}{2} 
\label{PDELimitFunctionalN} 
\begin{aligned}
 \mathcal{N}^{sym}(u) &= \tau u , &  &  \te{in }   D \\ 
 \frac{\partial (u / \sqrt{\rho}) }{\partial \bf{n}} & =0 & & \te{on }  \partial D.
\end{aligned}
\end{alignat}
The sense in which \eqref{PDELimitFunctionalN} holds is made precise in Subsection \ref{SectionSpectrumL}. The spectrum of the operator $\mathcal{N}^{sym}$ has similar properties to those of the spectrum of $\mathcal{L}$. We let 
$$0 = \tau_1 \leq \tau_2 \leq \dots,$$
denote the eigenvalues of $\mathcal{N}^{sym}$, repeated according to multiplicity.
\medskip

The continuum spectral clustering analogous to the discrete one of Algorithm 2 is as follows. 
Let $u_1,\dots, u_k : D \to \R$ be the orthonormal  set of eigenfunctions (with respect to the inner product $\langle\cdot ,\cdot \rangle_{L^2(D, \nu)}$)  corresponding to eigenvalues 
$\tau_1, \dots, \tau_k$. 
Normalize them by
\[ (\tilde u_1(x), \dots, \tilde u_k(x)) = \frac{( u_1(x), \dots, u_k(x)) }{\| ( u_1(x), \dots, u_k(x))\|} \;\te{ for all } x \in D. \]
Consider the measure $\tilde \mu = (\tilde u_1, \dots, \tilde u_k)_\sharp \nu$. 
Let  $\tilde G_i \subset \R^k$  be the clusters obtained by k-means clustering of $\tilde \mu$.
Then $G_i = (\tilde{u}_1, \dots, \tilde{u}_k)^{-1}(\tilde G_i)$ for $i=1, \dots, k$ define  the \emph{spectral clustering} o
$\nu$.
\medskip

Finally, the operator that describes the limit of the graph Laplacians $\mathcal{N}^{rw}_{n,\veps_n} ={ \mathcal{D}^{\veps_n}}^{-1} \mathcal{L}_{n, \veps_n}$ is described by
the operator $\mathcal{N}^{rw}$:
$$ \mathcal{N}^{rw}(u) = - \frac{1}{\rho^2} \divergence(\rho^2 \nabla u ). $$
As discussed in Subsection \ref{SectionSpectrumL}, the eigenvalues of $\mathcal{N}^{rw}$ are equal to the eigenvalues of $\mathcal{N}^{sym}$. The continuum clustering, which is analogous to the discrete one of Algorithm 3, is as in Subsection \ref{DescContClustU}, where eigenfunctions of $\mathcal{N}^{rw}$ are used.

%%%%%%%%%%%%%%%%%%%%%%%%%
\subsection{Passage from discrete to continuum.}

We are interested in showing that as $n \to \infty$ eigenvalues of discrete graph Laplacians and the associated eigenvectors converge towards eigenvalues and eigenfunctions of corresponding differential operators. 
The issue that arises is how to compare functions on discrete and continuum setting.  Typically this is achieved by introducing an interpolation operator that takes discretely defined functions to continuum ones and a restriction operator which restricts the continuum function to the discrete setting.
For this setting to work some smoothness of functions considered is required. Furthermore the choice of the interpolation operator and its properties adds an intermediate step that needs to be understood.

We choose a different route and introduce a way to compare the functions between settings directly. 
This approach is quite general and does not require any regularity assumptions. 
We use  the $TL^p$-topologies introduced in \cite{GTS} and in particular in this paper we focus in the $TL^2$-topology that we now recall. Denote by $\nu_n$ the empirical measure associated to the $n$ data points, that is
\begin{equation} \label{empirical}
\nu_n:=\frac{1}{n}\sum_{i=1}^{n}\delta_{\x_i}.
\end{equation}
For a given function $u \in L^2(D,\nu)$, the question is how to compare $u$ with a function $v \in L^2(D,\nu_n)$ (a function defined on the set $V$). 
%Note that without regularity assumptions on $u$, a restriction of $u$ to the data points is not a well defined operation since $u$ in fact represents an equivalence class of functions that can disagree on sets of Lebesgue measure zero and in particular on the set $V$.
 More generally, one can consider the problem of how to compare functions in $L^2(D,\mu)$ with those in $L^2(D,\theta)$ for arbitrary probability measures $\mu$, $\theta$ on $D$. We define the set of objects that 
 includes both the functions in discrete setting and those in continuum setting as follows: 
\[ TL^2(D) := \{ (\mu, f) \; : \:  \mu \in \mathcal P(D), \, f \in L^2(D, \mu) \}, \]
where $\mathcal{P}(D)$ denotes the set of Borel probability measures on $D$. For $(\mu,f)$ and $(\theta,g)$ in $TL^2$ we define the distance
  \begin{equation*} 
d_{TL^2}((\mu,f), (\theta,g)) =
   \inf_{\pi \in \Gamma(\mu, \theta)} \left(\iint_{D \times D} |x-y|^2 +  |f(x)-g(y)|^2  d\pi(x,y) \right)^{\frac{1}{2}},
\end{equation*}
where $\Gamma(\mu, \theta)$ is the set of all {\em couplings} (or \emph{transportation plans})  between $\mu$ and $\theta$, that is, the set of all  Borel probability measures on $D \times D$ for which the marginal on the first variable is $\mu$ and the marginal on the second variable is $\theta$. It was proved in \cite{GTS} that $d_{TL^2}$ is indeed a metric on $TL^2$. As remarked in \cite{GTS}, one of the nice features of the convergence in $TL^2$ is that it simultaneously generalizes the weak convergence of probability measures and the convergence in $L^2$ of functions. It also provides us with a way to compare functions which are supported in sets as different as point clouds and continuous domains. In Subsection \ref{TT} we present more details about this metric.

For a given $\mu \in \mathcal{P}(D)$ we denote by $L^2(\mu)$ the space of $L^2$-functions with respect to the measure $\mu$. Also, for  $f,g \in L^2(\mu)$ we write
$$\langle f, g \rangle_{\mu}:=\int_{D} fg d\mu \quad \te{ and }  \quad  \| f \|_\mu^2 =   \langle f, f\rangle_{\mu}. $$
Finally, if the measure $\mu$ has a density $\rho$, that is, if $d \mu = \rho dx$, we may write $\langle f, g \rangle_{\rho}$ and $\| f\|_\rho$ instead of $\langle f, g \rangle_{\mu}$ and $\|f\|_\mu$.

\subsection{Convergence of eigenvalues, eigenvectors, and of spectral clustering: the unnormalized case.}

Here we present one of the main results of this paper.
We state the conditions on $\veps_n$ for the spectrum of 
 the unnormalized graph Laplacian $\mathcal{L}_{n , \veps_n}$, given in \eqref{GLen}, to converge to the spectrum of $\mathcal{L}$, given by \eqref{defL} and for the spectral clustering of Algorithm 1 to converge to the clustering of Subsection \ref{DescContClustU}.
Let $\lambda_1 \leq \lambda_2 \leq \cdots $ be the eigenvalues of $\mathcal L$ and $u_1, u_2, \dots$ the corresponding orthonormal eigenfunctions, as in  Subsection \ref{DescContClustU}. We recall that orthogonality is considered with respect to the inner product in $L^2(\nu)$. 
 
 To state the results it is convenient to introduce 
$ 0=\overline{\lambda}_1 < \overline{\lambda}_2 < \cdots, $
the sequence of distinct eigenvalues of $\mathcal{L}$. For a given $k \in \N$, we denote by $s(k)$ the multiplicity of the eigenvalue $\overline{\lambda}_k$ and we let $\hat{k} \in \N$ be such that 
$\overline{\lambda}_k =  \lambda_{\hat{k}+1} = \dots = \lambda_{\hat{k}+s(k)}.$
Also, we denote by $\Proj_k: L^2(\nu) \rightarrow L^2(\nu)$ the projection (with respect to the inner product $\langle \cdot , \cdot \rangle_{\nu}$) onto the eigenspace of $\mathcal{L}$ associated to the eigenvalue $\overline{\lambda}_k$. For all large enough $n$, we denote  by $\Proj_k^{(n)}: L^2(\nu_n) \rightarrow L^2(\nu_n)$ the projection (with respect to the inner product $\langle \cdot , \cdot \rangle_{\nu_n}$) onto the space generated by all the eigenvectors of $\mathcal{L}_{n, \veps_n}$ associated to the eigenvalues $\lambda_{\hat{k}+1}^{(n)}, \dots, \lambda_{\hat{k}+ s(k)}^{(n)}$. Here, as in the rest of the paper, we identify $\R^n$ with the space $L^2(D, \nu_n)$.

\begin{theorem}[Convergence of the spectra of the unnormalized graph Laplacians] Let $d \geq 2$ and let $D \subseteq \R^d$, be an open, bounded, connected set with Lipschitz boundary.  Let $\nu$ be a probability measure on $D$ with continuous density $\rho$, satisfying
\begin{equation}
(\forall x \in D) \quad m \leq \rho(x) \leq M,
  \label{DensityBound}
  \end{equation}
for some $0< m\leq M$. Let $\x_1, \dots, \x_n, \dots$ be a sequence of i.i.d. random points chosen according to $\nu$. Let $\left\{ \veps_n \right\}_{n \in \N}$ be a sequence of positive numbers converging to $0$ and satisfying
\begin{align} \label{HypothesisEpsilon}
\begin{split}
%\lim_{n \rightarrow \infty} \frac{\sqrt{\log(\log n)}}{ n^{1/2}} \frac{1}{\veps_n}& =0 \:\:  if \: d=1, \\
\lim_{n \rightarrow \infty} \frac{(\log n)^{3/4}}{ n^{1/2} } \frac{1}{\veps_n}& =0 \:\:  if \: d=2, \\
\lim_{n \rightarrow \infty} \frac{(\log n)^{1/d}}{ n^{1/d} } \frac{1}{\veps_n}& =0 \:\: if \: d\geq3.
\end{split}
\end{align}
Assume the kernel $\eta$ satisfies conditions (K1)-(K3). Then, with probability one, all of the following statements hold true:
\begin{list1}
\item  Convergence of Eigenvalues: For every $k \in \N$
\begin{equation}
\lim_{n \rightarrow \infty}    \frac{ 2\lambda_{k}^{(n)} }{ n \veps_n^{2} }= \sigma_\eta \lambda_k,
\label{EigenvalueConvergence}
\end{equation}
where $\sigma_\eta$ is defined in \eqref{sigma_eta}.
\item For every $k \in \N$, every  sequence $\left\{ u_k^n \right\}_{n \in \N}$ with $u_k^n$ an eigenvector of $\mathcal{L}_{n , \veps_n}$ associated to the eigenvalue $\lambda_k^{(n)}$ and with $\|u_k^n\|_{\nu_n}=1$ is pre-compact in $TL^2$. Additionally,
whenever $u_k^n \overset{{TL^2}}{\longrightarrow} u_k$ along a subsequence as $n \to \infty$, 
then $\| u_k\|_{\nu} =1$ 
and $u_k$ is an eigenfunction of $\mathcal{L}$ associated to $\lambda_k$. 
\item Convergence of Eigenprojections: For all $k \in \N$ and 
for arbitrary sequence 
 $v_n \in L^2(\nu_n)$, if $v_n \overset{{TL^2}}{\longrightarrow} v$ 
 as $n \to \infty$ along some subsequence. Then along that subsequence
 \[   \Proj^{(n)}_k(v_n) \overset{{TL^2}}{\longrightarrow} \Proj_k(v),\: \text{ as }  n \rightarrow \infty.\]
\item Consistency of Spectral Clustering. Let $G^n_1, \dots G^n_k$ be the clusters obtained in Algorithm 1. Let $\nu_i^{n}= \nu_n \llcorner_{G^n_i}$ (the restriction of $\nu_n$ to $G_i^n$) for $i=1, \dots, k$. Then $(\nu_1^{n}, \dots, \nu_k^{n})$ is precompact with respect to weak convergence of measures and furthermore if $(\nu_1^{n}, \dots, \nu_k^{n})$ converges along a subsequence to $(\nu_{1}, \dots, \nu_{k})$ then 
$(\nu_{1}, \dots, \nu_{k}) = (\nu_{\llcorner G_1}, \dots, \nu_{ \llcorner G_k})$ where $G_1, \dots, G_k$ is a spectral clustering of $\nu$, described in Subsection \ref{DescContClustU}.

\end{list1}
\label{ConvergenceSpectrumTheorem}
\end{theorem}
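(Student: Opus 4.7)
The plan is to prove all four statements by reducing everything to a single variational convergence ingredient: $\Gamma$-convergence together with compactness of the rescaled Dirichlet energies in the $TL^2$ topology. Define
\begin{equation*}
E_n(u) := \frac{1}{n^2 \veps_n^2}\langle \mathcal{L}_{n,\veps_n}u,u\rangle = \frac{1}{2 n^2 \veps_n^{d+2}} \sum_{i,j} \eta\!\left(\tfrac{\x_i-\x_j}{\veps_n}\right)(u(\x_i)-u(\x_j))^2
\end{equation*}
on $L^2(\nu_n)$, and define $E(u) := \tfrac{\sigma_\eta}{2}\int_D \rho^2|\nabla u|^2\,dx$ on $L^2(\nu)$ (with $E(u)=+\infty$ outside $H^1(D)$). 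I would first show that with probability one, $E_n \overset{\Gamma}{\to} \tfrac{1}{2}E$ in the $TL^2$ topology (the factor $2$ comes from \eqref{EigenvalueConvergence}), and that bounded energy sequences are precompact in $TL^2$. The scaling \eqref{HypothesisEpsilon} enters precisely here: it forces $\veps_n \gg d_\infty(\nu_n,\nu)$, where the $\infty$-optimal transport distance between the empirical and true measure is of order $(\log n)^{1/d}/n^{1/d}$ for $d\geq 3$ (and $(\log n)^{3/4}/n^{1/2}$ for $d=2$). With such a transport map $T_n$ one can transfer functions between settings without losing control on the nonlocal Dirichlet energy.

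For the $\liminf$ inequality, given $u_n \overset{TL^2}{\to} u$, I would rewrite $E_n(u_n)$ as an integral against $\nu_n\otimes\nu_n$, push this through $T_n$ to get an integral against $\nu\otimes\nu$ with a slightly perturbed kernel, then use a convolution/mollification argument combined with the characterization of $H^1$ via difference quotients to produce $\sigma_\eta \int \rho^2|\nabla u|^2$. For the recovery sequence, given $u \in H^1(D)$, I would set $u_n := u\circ T_n$ (or, for a smooth dense class, simply restrict $u$ to $V$) and use that transport maps at the $\infty$-scale $\ll \veps_n$ do not change the nonlocal energy in the limit. Compactness is by a nonlocal Poincar\'e-type inequality established in the $TL^2$ framework (available from the authors' earlier work).

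Given this $\Gamma$-convergence plus compactness, the Courant–Fischer min-max characterization
\begin{equation*}
\lambda_k^{(n)} = \min_{\dim S=k} \max_{u\in S\setminus\{0\}} \frac{\langle \mathcal{L}_{n,\veps_n}u,u\rangle}{\|u\|_{\nu_n}^2}, \qquad \lambda_k = \min_{\dim S=k}\max_{u\in S\setminus\{0\}}\frac{\int \rho^2|\nabla u|^2}{\|u\|_\nu^2},
\end{equation*}
together with the fact that $\|\cdot\|_{\nu_n}$ converges to $\|\cdot\|_\nu$ along $TL^2$-convergent sequences, yields \eqref{EigenvalueConvergence}. For statement (2), any $L^2$-normalized sequence of eigenvectors $u_k^n$ has bounded energy $2\lambda_k^{(n)}/(n\veps_n^2)\to \sigma_\eta\lambda_k$, hence a $TL^2$-cluster point $u_k$ by compactness; the $\liminf$ and the recovery sequence show that $u_k$ realizes equality in the min-max, so $u_k$ is an eigenfunction with eigenvalue $\lambda_k$, and $\|u_k\|_\nu=1$ because $TL^2$-convergence with matching energy implies $L^2$-norm convergence. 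Statement (3), convergence of eigenprojections, follows from (1) and (2) by a standard abstract argument: the eigenspace of $\bar\lambda_k$ has a fixed dimension $s(k)$, and $\Proj^{(n)}_k(v_n)$ can be written as $\sum_{j=\hat k+1}^{\hat k+s(k)} \langle v_n, u_j^n\rangle_{\nu_n}\, u_j^n$; since the $u_j^n$'s are (subsequentially) $TL^2$-compact with limits that together span the $\bar\lambda_k$ eigenspace (this uses orthogonality, which passes to the limit), one reads off $TL^2$-convergence of $\Proj^{(n)}_k(v_n)$ to $\Proj_k(v)$.

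Finally, for statement (4), the map $\Phi_n := (u_1^n,\dots,u_k^n):V\to\R^k$ converges in $TL^2$ (componentwise) to $\Phi=(u_1,\dots,u_k):D\to\R^k$, so the pushforwards $\mu_n := (\Phi_n)_\sharp \nu_n$ converge weakly to $\mu := \Phi_\sharp \nu$ in $\R^k$. The $k$-means functional is continuous under weak convergence of compactly supported probability measures (Pollard-type argument), and its minimizers are stable up to extraction and up to the unavoidable relabeling/tie-breaking ambiguity. Writing the clusters as preimages under $\Phi_n$ and applying the $TL^2$-convergence of $\Phi_n$ then yields weak convergence of $\nu^n_i$ to $\nu_{\llcorner G_i}$ along subsequences. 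The main obstacle is the $\liminf$ inequality in the $\Gamma$-convergence step, because the sharp scaling in \eqref{HypothesisEpsilon} leaves essentially no room between the transport scale $d_\infty(\nu_n,\nu)$ and the connectivity scale $\veps_n$; handling this gap requires simultaneously mollifying on scale $\veps_n$, transporting on scale $d_\infty(\nu_n,\nu)$, and carefully tracking the dependence on $\eta$ via $\sigma_\eta$.
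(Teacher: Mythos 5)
Your plan follows essentially the same path as the paper: $\Gamma$-convergence plus compactness of the rescaled graph Dirichlet energies in $TL^2$ (built by composing the $\infty$-transport estimate $\|T_n - \mathrm{Id}\|_\infty \ll \veps_n$ with a deterministic nonlocal-to-local $\Gamma$-limit), the Courant--Fisher min-max applied by induction on $k$, convergence of eigenprojections via orthonormal discrete eigenbases passing to the limit, and $k$-means stability under pushforward weak convergence. Watch two normalizations: with your definition of $E$ you should obtain $E_n \overset{\Gamma}{\to} E$, not $\tfrac12 E$, and in the discrete Rayleigh quotient both the numerator and the denominator must use the same inner product (either the $\R^n$ one or the $\nu_n$ one, but not one of each), otherwise it produces $n\lambda_k^{(n)}$ rather than $\lambda_k^{(n)}$.
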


%\red
%\begin{remark}
%To clarify the second statement of Theorem \ref{ConvergenceSpectrumTheorem}, we remark that it is not true that the projection to the eigenspace associated to $k$-th smallest eigenvalue of $\mathcal{L}_{n,\veps_n}$, converges to the projection of the eiganspace associated to the $k$-th smallest eigenvalue of $\mathcal{L}$. To illustrate why such condition can not hold, let us consider an example in the context of two by two matrices. Let $A_n$ be the matrix
%\[A_n = \left(\begin{array}{cc}
%    1       & 0 \\
%    0      & 1 + \frac{1}{n}
%\end{array}\right)
% \]
%and let
%\[A = \left(\begin{array}{cc}
%    1       & 0 \\
%    0      & 1 
%\end{array}\right).
% \]
%Note that the projection of the vector $(1,1)$ on the eigenspace associated to the the smallest eigenvalue of $A_n$ ( namely $\lambda_n =1$) is the vector $(1,0)$, whereas the projection of $(1,1)$ on the eiganspace associated to the smallest eigenvalue of $A$ ( namely $\lambda=1$) is the vector $(1,1)$ itself ( as in this case such eigenspace is $\R^2$ itself). Note however that by considering a neighborhood $U$ around $\lambda =1$,  we deduce that for large enough $n$, the space spanned by the eiganspaces of those eigenvalues of $A_n$ in $U$ is $\R^2$. 
%\end{remark}

\begin{remark}
We remark that although the choice of the $TL^2$-topology used in the previous theorem may seem unusual at first sight, it actually reduces to a more common notion of convergence (like the one used in \cite{vonLuxburg} which we described below) in the presence of regularity assumptions on the density $\rho$ and the domain $D$. In fact, assume for simplicity that $D$ has smooth boundary and that $\rho$ is a smooth function. Consider $\left\{ u_k^n \right\}_{n \in \N}$ where $u_k^n$ is an eigenvector of $\mathcal{L}_{n , \veps_n}$ associated to the eigenvalue $\lambda_k^{(n)}$ and satisfying $\|u_k^n\|_{\nu_n}=1$. The second statement in Theorem \ref{ConvergenceSpectrumTheorem} says that up to subsequence, $u_{k}^n \overset{{TL^2}}{\longrightarrow} u_k$, where $u_k$ is an eigenfunction of $\mathcal{L}$ associated to $\lambda_k$. From the regularity theory of elliptic PDEs it follows that $u_k$ is smooth up to the boundary.  In particular, it makes sense to define a function $\tilde{u}_k^n$ on the point cloud, by simply taking the restriction of $u_k$ to the points $\left\{\x_1, \dots, \x_n \right\}$. It is straightforward to check that $\tilde{u}_k^n \overset{{TL^2}}{\longrightarrow} u_k$ due to the smoothness of $u_k$. In turn, $u_k^n \overset{{TL^2}}{\longrightarrow} u_k $, implies that $d_{TL^2}((\nu_n, \tilde{u}_k^n - u_k^n  ), ( \nu,0)     ) \rightarrow 0$. From this and Proposition \ref{EquivalenceTLp}, we conclude that
$$ \| u_k^n -  \tilde{u}_k^n\|_{\nu_n}  \rightarrow 0.    $$
This is precisely the mode of convergence used in \cite{vonLuxburg}.
\end{remark}
The proof of Theorem \ref{ConvergenceSpectrumTheorem} relies on the study of the limiting behavior of the following rescaled form of the Dirichlet energy \eqref{Perimeter} on the graph:
\begin{equation} \label{tTV}
G_{n , \veps}(u):= \frac{1}{\veps^2n^2}\sum_{i,j}W_{i,j}(u(\x_i)- u(\x_j) )^2.  
\end{equation}
The type of limit which is relevant for the problem, is the one given by variational convergence known as the  $\Gamma$-convergence. The notion of $\Gamma$-convergence is recalled in Subsection \ref{sec:gamma}. 
This notion of convergence is particularly suitable in order to study the convergence of minimizers of objective functionals on graphs as $n \to \infty$, as it is discussed in \cite{CheegerRatioConsistency}.

The relevant continuum energy is 
 the \emph{weighted Dirichlet} energy  $G: L^2(D) \rightarrow [0, \infty]$:
 \begin{equation} \label{WeightedDirichlet}
 G(u):= 
 \begin{cases}
  \int_{D}|\nabla u (x)|^2 \rho^2(x) dx   &  \te{if }   u \in H^{1}(D)  \\
\infty & \te{if } u \in L^2(D) \setminus H^{1}(D).
\end{cases}
\end{equation}

\begin{theorem}[$\Gamma$-convergence of Dirichlet energies]
Consider the same setting as in Theorem \ref{ConvergenceSpectrumTheorem} and the same assumptions on $\eta$  and on $\left\{ \veps_n \right\}_{n \in \N}$. Then, $ G_{n,\veps_n}$, defined by \eqref{tTV}, $\Gamma$-converge to $\sigma_\eta G$ as $n \rightarrow \infty$ in the $TL^2$ sense, where $\sigma_\eta$ is given by \eqref{sigma_eta}
 and $G$ is the weighted Dirichlet energy with weight $\rho^2$ defined in \eqref{WeightedDirichlet}.
Moreover, the sequence of functionals $\left\{ G_{n , \veps_n} \right\}_{n \in \N}$ satisfies the compactness property with respect to the $TL^2$-metric. That is, every sequence $\left\{ u_n \right\}_{n \in \N}$ with $u_n \in L^2(\nu_n)$ for which
$$  \sup_{n \in \N}\| u_n\|_{\nu_n} < \infty, \quad   \sup_{n \in \N } G_{n , \veps_n}(u_n) < \infty,  $$
is precompact in $TL^2$.
 \label{DiscreteGamma}
\end{theorem}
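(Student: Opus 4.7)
The plan is to follow the non-local to local variational convergence program based on optimal transportation: both the liminf, the limsup and the $TL^2$ compactness will be obtained by comparing the graph energy $G_{n,\veps_n}$ with a non-local continuum energy of Bourgain--Brezis--Mironescu / Ponce type, using an $\infty$-optimal transport map between $\nu$ and $\nu_n$.

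\textbf{Step 1 (transport map).} I invoke the almost-sure matching estimate (from the authors' prior work \cite{GTS}) to produce, for all large $n$, a Borel map $T_n : D \to V_n$ with $T_{n\sharp}\nu = \nu_n$ and $\|\mathrm{Id} - T_n\|_\infty \lesssim (\log n)^{3/4}/n^{1/2}$ if $d=2$ and $\lesssim (\log n / n)^{1/d}$ if $d\geq 3$. Hypothesis \eqref{HypothesisEpsilon} then yields $\|\mathrm{Id}-T_n\|_\infty / \veps_n \to 0$ almost surely. The map $T_n$ plays two roles: convergence $u_n \xrightarrow{TL^2} u$ is (by the characterization of $TL^2$) equivalent to $u_n\circ T_n \to u$ in $L^2(\nu)$; and the graph energy can be rewritten as
\[ G_{n,\veps_n}(u_n) = \frac{1}{\veps_n^2}\iint_{D\times D} \eta_{\veps_n}\bigl(T_n(x)-T_n(x')\bigr)\bigl(u_n(T_n(x))-u_n(T_n(x'))\bigr)^2 d\nu(x)d\nu(x'). \]

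\textbf{Step 2 (liminf + compactness).} Suppose $\sup_n \|u_n\|_{\nu_n}<\infty$ and $\sup_n G_{n,\veps_n}(u_n)<\infty$. Set $f_n := u_n \circ T_n \in L^2(\nu)$. Using the monotonicity of $\bm{\eta}$ (K2), for any fixed $\delta>0$ and all $n$ large enough that $\|\mathrm{Id}-T_n\|_\infty \leq \delta \veps_n/2$, one has the two-sided sandwich
\[ \eta_{(1+\delta)\veps_n}(x-x') \;\lesssim\; \eta_{\veps_n}(T_n(x)-T_n(x')) \;\lesssim\; \eta_{(1-\delta)\veps_n}(x-x'), \]
which after multiplication by the continuous density $\rho(x)\rho(x')$ (using $\nu = \rho\,dx$) bounds the non-local continuum energy
\[ E_{\veps}(f) := \frac{1}{\veps^2}\iint \eta_{\veps}(x-x')(f(x)-f(x'))^2 \rho(x)\rho(x')\,dx\,dx' \]
above and below by $G_{n,\veps_n}(u_n)$ (up to factors $(1\pm\delta)^{-2}$ and vanishing error from continuity of $\rho$). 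Hence $E_{(1+\delta)\veps_n}(f_n)$ is uniformly bounded. The Ponce / BBM compactness theorem on Lipschitz domains (valid because $\rho$ is bounded away from $0$ and $\infty$) then gives precompactness of $\{f_n\}$ in $L^2(\nu)$, i.e.\ precompactness of $\{u_n\}$ in $TL^2$, proving compactness. For the liminf, pass to a $TL^2$-limit $u \in L^2(\nu)$; by the same sandwich together with the non-local-to-local lower semicontinuity (Ponce), $u \in H^1(D)$ and
\[ \sigma_\eta G(u) = \sigma_\eta\int_D |\nabla u|^2 \rho^2\,dx \;\leq\; (1-\delta)^{-2}\liminf_n G_{n,\veps_n}(u_n), \]
and letting $\delta \downarrow 0$ concludes the liminf inequality.

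\textbf{Step 3 (recovery sequence).} By density of $C^\infty(\overline D)$ in $H^1(D)$ (Lipschitz boundary) and a standard diagonal argument, it suffices to construct a recovery sequence for $u\in C^\infty(\overline D)$. Define $u_n$ on $V_n$ by restriction, $u_n(x_i) = u(x_i)$. Since $u$ is continuous and $\nu_n \rightharpoonup \nu$, $u_n \xrightarrow{TL^2} u$ follows immediately. For the energy, Taylor expand $u(y)-u(x)=\nabla u(x)\cdot(y-x) + O(|x-y|^2)$ inside the sum (the kernel constrains $|x-y|\lesssim \veps_n$), apply the law of large numbers / weak convergence to push $\nu_n\otimes\nu_n$ to $\nu\otimes\nu$, and use the identity
\[ \frac{1}{\veps^2}\int_{\R^d}\eta_\veps(h)(\nabla u(x)\cdot h)^2\,dh \;\longrightarrow\; \sigma_\eta |\nabla u(x)|^2 \]
together with $d\nu = \rho\,dx$ to obtain $\limsup_n G_{n,\veps_n}(u_n) \leq \sigma_\eta \int_D|\nabla u|^2\rho^2\,dx = \sigma_\eta G(u)$.

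\textbf{Main obstacle.} The crux is Step 2: making the sandwich between $G_{n,\veps_n}$ and $E_{(1\pm\delta)\veps_n}$ rigorous, which uses in an essential way both the monotonicity assumption (K2) on $\bm\eta$ and the quantitative matching $\|\mathrm{Id}-T_n\|_\infty \ll \veps_n$. The dimension threshold in \eqref{HypothesisEpsilon} is precisely the sharp rate at which this matching can be done almost surely, so the scaling hypothesis of the theorem is exactly what the method needs; weakening the kernel or domain regularity would require substantially more delicate non-local-to-local arguments.
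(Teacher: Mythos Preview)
Your overall strategy coincides with the paper's: rewrite $G_{n,\veps_n}$ through an $\infty$-optimal transport map $T_n$, sandwich it between nonlocal continuum energies $G_\veps$, and invoke the Ponce/BBM $\Gamma$-convergence and compactness of $G_\veps$ (Proposition~\ref{NonlocalDirichlet}). However, three of your steps contain concrete errors that the paper handles differently.

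\emph{The sandwich is written with the scales swapped.} For the indicator profile, $|x-x'|\leq (1-\delta)\veps_n$ together with $\|T_n-\mathrm{Id}\|_\infty\leq \tfrac{\delta}{2}\veps_n$ forces $|T_n(x)-T_n(x')|\leq\veps_n$, so the correct lower bound uses the \emph{smaller} scale, and the upper bound the larger one. Your displayed inequality $\eta_{(1+\delta)\veps_n}(x-x')\lesssim \eta_{\veps_n}(T_n(x)-T_n(x'))\lesssim \eta_{(1-\delta)\veps_n}(x-x')$ is false in both directions already for the indicator kernel. The paper sets $\tilde\veps_n=\veps_n\mp 2\|T_n-\mathrm{Id}\|_\infty$ for the liminf and limsup respectively.

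\emph{For general kernels the pointwise sandwich fails.} Monotonicity (K2) alone does not give $\bm\eta(|T_n(x)-T_n(x')|/\veps_n)\geq c\,\bm\eta(|x-x'|/\tilde\veps_n)$: near a jump of $\bm\eta$ the ratio is uncontrolled. The paper first reduces to the indicator kernel $\bm\eta=\chi_{[0,1]}$ (via the layer-cake decomposition of Section~5 of \cite{GTS}, using (K1)--(K3)), for which the sandwich is an honest set-inclusion and the argument goes through cleanly. You should either do this reduction or use the truncated kernels $\underline{\bm\eta}^\alpha,\overline{\bm\eta}^\alpha$ of \eqref{HatEtaAlpha}--\eqref{TildeEtaAlpha}.

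\emph{The limsup in Step~3 cannot be closed by ``LLN / weak convergence of $\nu_n\otimes\nu_n$''.} The integrand $\eta_{\veps_n}(\,\cdot\,)$ itself depends on $n$ and concentrates on an $\veps_n$-diagonal, so weak convergence of the product measure says nothing. The paper instead restricts to Lipschitz $u$, uses the \emph{same} transport sandwich to get $G_{n,\veps_n}(u_n)\leq (\veps_n/\tilde\veps_n)^{d+2}G_{\tilde\veps_n}(u\circ T_n)$, controls $|G_{\tilde\veps_n}(u\circ T_n)-G_{\tilde\veps_n}(u)|$ via the Lipschitz bound $|u-u\circ T_n|\leq \mathrm{Lip}(u)\|T_n-\mathrm{Id}\|_\infty$, and finally appeals to the pointwise convergence $G_{\tilde\veps_n}(u)\to\sigma_\eta G(u)$ in Proposition~\ref{NonlocalDirichlet}. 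Your Taylor-expansion idea can be made to work, but only after routing through the transport map in this way.
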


The fact that the weight in the limiting functional $G$ is $\rho^2$ (and not $\rho$) essentially follows from the fact that the graph Dirichlet energy defined in \eqref{tTV} is a double sum. This is the same weight that shows up in the study of the continuum limit of the \textit{graph total variation} in \cite{GTS}. Theorem \ref{DiscreteGamma} is analogous to Theorems 1.1 and  1.2 in \cite{GTS} combined.

\subsection{Convergence of eigenvalues, eigenvectors, and of spectral clustering: the normalized case.}

We also study the limit of the spectra of $ \mathcal{N}^{sym}_{n , \veps_n} $, the symmetric normalized graph Laplacian which we recall is given by,
$$  \mathcal{N}_{n , \veps_n}^{sym} :=\mathcal{D}^{-1/2} \mathcal{L}_{n , \veps_n} \mathcal{D}^{-1/2}.  $$
For a function $u: V \rightarrow \R$, \eqref{LaplacianAndEnergyN0} can be written as
\begin{equation}
\langle \mathcal{N}^{sym}_{n , \veps_n}u , u  \rangle = \frac{1}{2} \sum_{i,j}W_{i,j}\left(\frac{u(\x_i)}{\sqrt{\mathcal{D}_{ii}}} - \frac{u(\x_j)}{\sqrt{\mathcal{D}_{jj}}}\right)^2.
\label{LaplacianAndEnergyNormalized}
\end{equation}
We denote by 
$$0=\tau_{1}^{(n)}\leq \dots \leq \tau_{n}^{(n)}$$
the eigenvalues of $\mathcal{N}^{sym}_{n , \veps_n}$ repeated according to multiplicity. Their limit is described by  differential operator 
$$\mathcal{N}^{sym}: u \mapsto - \frac{1}{\rho^{3/2}}\divergence \left(\rho^2 \nabla \left( \frac{u}{\sqrt{\rho}}\right)  \right).$$
%We consider the space
%\begin{equation}
%  H^1_{\sqrt{\rho}}(D) := \left\{ u \in L^2(D) \: : \:  \frac{u}{\sqrt{\rho}} \in H^1(D)  \right\}. 
%\label{H1sqrtrho}
%\end{equation}
%The spectrum of $\mathcal{N}^{sym}$ is the set of pairs $\tau \in \R$ and $u \in H^1_{\sqrt{\rho}}(D)$, where $u$ is not identically equal to zero, such that 
%\begin{alignat}{2} 
%\label{PDELimitFunctionalN} 
%\begin{aligned}
% \mathcal{N}^{sym}(u) &= \tau u , &  &  \te{in }   D \\ 
% \frac{\partial (u / \sqrt{\rho}) }{\partial \bf{n}} & =0 & & \te{on }  \partial D.
%\end{aligned}
%\end{alignat}
%The sense in which \eqref{PDELimitFunctionalN} holds is made precise in Subsection \ref{SectionSpectrumL}. The spectrum of the operator $\mathcal{N}^{sym}$ has similar properties to those of $\mathcal{L}$. 
Let 
$$0 = \tau_1 \leq \tau_2 \leq \dots,$$
denote the eigenvalues of $\mathcal{N}^{sym}$, repeated according to multiplicity. We write 
$ 0=\overline{\tau}_1 < \overline{\tau}_2 < \dots, $
to denote the distinct eigenvalues of $\mathcal{L}$. For a given $k \in \N$, we denote by $s(k)$ the multiplicity of the eigenvalue $\bar{\tau}_k$ and we let $\hat{k} \in \N$ be such that 
$\bar{\tau}_k =  \tau_{\hat{k}+1} = \dots = \tau_{\hat{k}+s(k)}.$
We define $\Proj_k$ and $\Proj_k^{(n)}$ analogously to the way we defined them in the paragraph preceding Theorem \ref{ConvergenceSpectrumTheorem}. The following is analogous to Theorem \ref{ConvergenceSpectrumTheorem}. 
%\begin{equation}
%\label{PDELimitFunctionalNormalized} 
% \left\{ \begin{array}{l}
% -\frac{ \divergence(\rho^2 \nabla ( \frac{u}{\sqrt{\rho}} )  )}{\rho}=  \lambda u , \:  \te{in} \:  D 
%\\ \frac{\partial u}{\partial \bf{n}} =0, \: \te{on} \: \partial D.
%\end{array} \right.
%\end{equation}
%For the same scaling of $\veps$ with respect to $n$ as in Theorem \ref{ConvergenceSpectrumTheorem} we study the limit of the eigenvalues $\mu^{(n)}_{1} \leq \dots \leq \mu^{(n)}_n$ of $\mathcal{N}_{n , \veps}$ and the limit of the corresponding associated eigenvectors.  In the limit, the relevant PDE is
%\begin{equation}
%\label{PDELimitFunctionalNormalized} 
% \left\{ \begin{array}{l}
% - \divergence(\rho^2 \nabla ( \frac{u}{\sqrt{\rho}} )  )=  \lambda u \rho, \:  \te{in} \:  D 
%\\ \frac{\partial u}{\partial \bf{n}} =0, \: \te{on} \: \partial D.
%\end{array} \right.
%\end{equation}
 \begin{theorem}[Convergence of the spectra of the normalized graph Laplacians]
Consider the same setting as in Theorem \ref{ConvergenceSpectrumTheorem} and the same assumptions on $\eta$  and on $\left\{ \veps_n \right\}_{n \in \N}$. Then, with probability one, all of the following statements hold
\begin{list1}
\item Convergence of Eigenvalues: For every $k \in \N$
\begin{equation*}
\lim_{n \rightarrow \infty}    \frac{2 \tau_{k}^{(n)} }{ \veps_n^{2} }= \frac{\sigma_\eta}{\beta_\eta  } \tau_k,
\label{EigenvalueConvergence}
\end{equation*}
where $\beta_\eta$ is given by
\begin{equation}
\beta_\eta:= \int_{\R^d}\eta(h) dh,
\label{BetaEta}
\end{equation}
and where $\sigma_\eta$ is given by \eqref{sigma_eta}.
%Convergence of Eigenprojections: If $u_n \overset{{TL^2}}{\longrightarrow} u$ then, for every $k \in \N$
%$$   \tilde{\Proj}_{k}^{(n)}(u_n) \overset{{TL^2}}{\longrightarrow} \tilde{\Proj}_{k}(u),\: as \: n \rightarrow \infty, $$
%where $\tilde{\Proj}_{k}^{(n)}(u_n)$ is the projection (with respect to the inner product $\langle \cdot, \cdot \rangle_{\nu_n}$) of $u_n$ onto the eigenspace associated to the $k$-th smallest eigenvalue of $\mathcal{N}_{n , \veps_n}$ (among the different eigenvalues); $\tilde{\Proj}_{k}(u)$ is the projection ( with respect to the inner product $\langle\cdot, \cdot \rangle_{\rho}$)  of $u$ onto the eigenspace associated to the $k$-th smallest eigenvalue of $\mathcal{N}$ (among the different eiganvalues).
\item For every $k \in \N$, every  sequence $\left\{ u_k^n \right\}_{n \in \N}$ with $u_k^n$ being an eigenvector of $\mathcal{N}_{n , \veps_n}^{sym}$ associated to the eigenvalue $\tau_k^{(n)}$ and with $\|u_k^n\|_{\nu_n}=1$ is pre-compact in $TL^2$. 
Additionally, whenever $u_k^n \overset{{TL^2}}{\longrightarrow} u_k$ along a subsequence as $n \to \infty$, 
then $\| u_k\|_{\nu} =1$ \nc 
and $u_k$ is an eigenfunction of $\mathcal{N}$ associated to $\tau_k$. 
\item Convergence of Eigenprojections: For all $k \in \N$ and for arbitrary $v_n \in L^2(\nu_n)$, if $v_n \overset{{TL^2}}{\longrightarrow} v$   along a subsequence as $n \to \infty$ then,
\[   \Proj^{(n)}_k(u_n) \overset{{TL^2}}{\longrightarrow} \Proj_k(u),\: \text{ as }  n \rightarrow \infty, \,
\te{along the subsequence. \nc} \]
\item Consistency of Spectral Clustering. Assume $\rho \in C^1(D)$. Let $G^n_1, \dots G^n_k$ be the clusters obtained in Algorithm 2. Let $\nu_{i}^n= \nu_n \llcorner_{G^n_i}$ for $i=1, \dots, k$. Then $(\nu_1^{n}, \dots, \nu_k^{n})$ is precompact with respect to weak convergence of measures and furthermore if $(\nu_1^{n}, \dots, \nu_k^{n})$ converges along a subsequence to $(\nu_{1}, \dots, \nu_{k})$ then 
$(\nu_{1}, \dots, \nu_{k}) = (\nu_{\llcorner G_1}, \dots, \nu_{ \llcorner G_k})$ where $G_1, \dots, G_k$ is a spectral clustering of $\nu$, described in Subsection \ref{DescContClustN}.
\end{list1}
\label{ConvergenceSpectrumTheoremNormalized}
\end{theorem}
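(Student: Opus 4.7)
The plan is to reduce the entire normalized problem to the unnormalized one studied in Theorem \ref{ConvergenceSpectrumTheorem} via the substitution $v(x_i) = u(x_i)/\sqrt{d_i^n}$, where $d_i^n := \mathcal{D}^{\veps_n}_{ii}/n$. Rewriting the Rayleigh quotient of $\mathcal{N}^{sym}_{n,\veps_n}$ from \eqref{LaplacianAndEnergyNormalized} in terms of $v$ gives, after multiplying numerator and denominator by $1/(\veps_n^2 n^2)$,
\begin{equation*}
\frac{2\tau_k^{(n)}}{\veps_n^2} \;=\; \min_{\dim S = k}\ \max_{v \in S}\ \frac{G_{n,\veps_n}(v)}{\frac{1}{n}\sum_{i} d_i^n\, v(x_i)^2},
\end{equation*}
with $G_{n,\veps_n}$ the graph Dirichlet energy \eqref{tTV}. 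A preliminary but crucial input is that the normalized degree converges uniformly on the point cloud: almost surely,
\[ \sup_{i=1,\dots,n}\ \bigl|\, d_i^n - \beta_\eta\rho(x_i) \,\bigr| \longrightarrow 0 \quad \text{as } n\to\infty. \]
This follows from the fact that $d_i^n = (\eta_{\veps_n} \ast \nu_n)(x_i)$, combined with the $\infty$-transport estimate $d_\infty(\nu_n,\nu) \lesssim \veps_n$ that underlies the scaling \eqref{HypothesisEpsilon} (this is precisely the tool used to prove Theorems \ref{ConvergenceSpectrumTheorem} and \ref{DiscreteGamma} and is available here under the same hypotheses). In particular the $d_i^n$ are bounded above and below by positive constants.

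With this in hand I would prove the $\Gamma$-convergence statement in the normalized setting, namely that $\widetilde{G}_{n,\veps_n}(u) := G_{n,\veps_n}(u/\sqrt{d^n})$, viewed as a functional on $TL^2$ together with the quadratic form $u\mapsto \tfrac{1}{n}\sum_i d_i^n (u(x_i)/\sqrt{d_i^n})^2$ in the denominator, $\Gamma$-converges to the continuum Rayleigh quotient $\frac{\sigma_\eta}{\beta_\eta}\int |\nabla(u/\sqrt{\rho})|^2\rho^2\,dx \big/ \int u^2 \rho\,dx$. The $\liminf$ inequality is obtained by applying the compactness and $\Gamma$-liminf halves of Theorem \ref{DiscreteGamma} to $v_n := u_n/\sqrt{d^n}$ (noting that uniform convergence of $d^n$ transfers $TL^2$-convergence between $u_n$ and $v_n$, and identifies the limit as $v = u/\sqrt{\beta_\eta\rho}$), while the $\limsup$/recovery sequence is obtained by starting from a recovery sequence $v_n \to v := u/\sqrt{\beta_\eta\rho}$ given by Theorem \ref{DiscreteGamma} and setting $u_n := \sqrt{d^n}\,v_n$.

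Given this, parts (1), (2) and (3) of the theorem follow by the same abstract machinery that yields Theorem \ref{ConvergenceSpectrumTheorem}. Specifically, the Courant–Fischer characterization above combined with the $\Gamma$-convergence of numerator and continuous convergence of denominator yields $\frac{2\tau_k^{(n)}}{\veps_n^2} \to \frac{\sigma_\eta}{\beta_\eta}\tau_k$; a one-line verification that $\tau_k = \min_S\max_w \int|\nabla w|^2\rho^2 / \int \rho^2 w^2$ (obtained from \eqref{PDELimitFunctionalN} by multiplying by $u\rho$ and integrating by parts with the Neumann condition) gives the matching continuum side. The eigenvector compactness is inherited from the compactness in Theorem \ref{DiscreteGamma} applied to $v_n$; identification of any subsequential $TL^2$-limit $u$ as an eigenfunction of $\mathcal{N}^{sym}$ for $\tau_k$, as well as convergence of the projections $\Proj_k^{(n)}$, follows by the standard argument: combine convergence of the first $k$ Rayleigh quotients with orthonormality and the $\Gamma$-liminf inequality applied to suitable test sequences (identical in structure to the unnormalized proof).

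The main obstacle, and the reason for the $C^1$ assumption on $\rho$ in part (4), is the normalization step $(u_1,\dots,u_k)/\|(u_1,\dots,u_k)\|$ in Algorithm 2: this map must be continuous on $D$ and the discrete analog must converge to it uniformly on compact sets that support $\nu$. I would handle this by invoking elliptic regularity for \eqref{PDELimitFunctionalN} (using $\rho\in C^1$ and the Lipschitz boundary of $D$) to ensure the limiting eigenfunctions $u_1,\dots,u_k$ are continuous up to $\overline D$, then upgrading the $TL^2$-convergence of the discrete eigenvectors to uniform convergence along a subsequence via standard a priori bounds for minimizers (as in Theorem \ref{ConvergenceSpectrumTheorem} part (4)). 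Once the map $\tilde\Phi := (\tilde u_1,\dots,\tilde u_k)$ is shown to be continuous and the discrete analog converges uniformly, the push-forward measures $\tilde\Phi^{(n)}_\sharp\nu_n$ converge weakly to $\tilde\Phi_\sharp\nu$; the consistency of $k$-means (Pollard \cite{pollard1981strong}) applied to these push-forwards, combined with pulling back the partitions by continuous maps, yields the precompactness and limit identification of $(\nu_1^n,\dots,\nu_k^n)$ claimed in (4). The only delicate point in this last step is ruling out degeneracy at points where $\|(u_1(x),\dots,u_k(x))\|$ is small; this is controlled by the fact that $\mathcal{D}^{1/2}\mathbf{1}$ (equivalently $\sqrt{\rho}$) belongs to the null space of $\mathcal{N}^{sym}$ and is bounded away from zero by the density hypothesis \eqref{DensityBound}.
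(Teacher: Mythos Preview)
Your treatment of parts (1)--(3) is essentially the paper's proof: the substitution $v=u/\sqrt{d^n}$, the uniform convergence of the normalized degrees $d_i^n\to\beta_\eta\rho(x_i)$ (this is the content of the paper's Lemma \ref{LemmaDiscreteGammaNormalized}), the resulting $\Gamma$-convergence of $\overline G_{n,\veps_n}$ to $\tfrac{\sigma_\eta}{\beta_\eta}\overline G$ (Theorem \ref{DiscreteGammaNormalized}), and then the Courant--Fischer argument copied from the unnormalized case. Nothing to add there.

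Part (4) has two genuine gaps. First, you propose to ``upgrade the $TL^2$-convergence of the discrete eigenvectors to uniform convergence along a subsequence via standard a priori bounds''. The paper does no such thing, and no such upgrade is available in this generality: discrete eigenvectors live on point clouds and one has only $TL^2$ compactness. The paper instead stays entirely in $TL^2$, using a composition lemma (Lemma \ref{TL2comp}) and the characterization in Lemma \ref{lem:tl_char} to pass the cluster indicators through the embedding map. Second, citing Pollard is not enough: the embedded measures $\mu_n=(u_1^n,\dots,u_k^n)_\sharp\nu_n$ are \emph{not} i.i.d.\ samples from the limit $\mu$, only Wasserstein-close to it. This is precisely why the paper proves the stability result Theorem \ref{th:km} (and Corollary \ref{cor:km}) for arbitrary sequences $\mu_m\to\mu$ in $d_2$, rather than invoking \cite{pollard1981strong}.

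Your handling of the normalization degeneracy also diverges from the paper, and here your idea is arguably simpler but depends on a reading of the algorithm. You control $\|(u_1,\dots,u_k)\|$ from below by noting that one of the $u_j$ is (a multiple of) $\sqrt\rho$, bounded below by \eqref{DensityBound}; this works only if the zero eigenvalue $\tau_1=0$ is among the $k$ used. The paper instead shows, for a general eigenfunction $u_1$, that $\nu(\{u_1=0\})=0$: using $\rho\in C^1$, elliptic regularity gives $w_1:=u_1/\sqrt\rho\in C^{1,\alpha}$ solving $-\divergence(\rho^2\nabla w_1)=\tau_1\rho^2 w_1$, and then the nodal set has measure zero by the implicit function theorem off the critical set together with rectifiability results for critical sets of elliptic solutions. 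The $C^1$ hypothesis enters here, for the nodal-set argument, not for any uniform-convergence upgrade. Once $\nu(\{\uu=0\})=0$ is known, the paper gives a direct $TL^2$ estimate (splitting the integral over $\{\|\uu\|<H\}$ and its complement) showing $\uu^n/\|\uu^n\|\to\uu/\|\uu\|$ in $TL^2$.
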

 The proof of the previous theorem is completely analogous to the one of Theorem \ref{ConvergenceSpectrumTheorem} once one has proved the variational convergence of the relevant energies. Indeed, consider $\overline{G}_{n , \veps}: L^2(\nu_n) \rightarrow \R$ defined by 
\begin{equation}
\overline{G}_{n , \veps}(u):= \frac{1}{n \veps^2 } \sum_{i,j}W_{i,j} \left( \frac{u(\x_i)}{\sqrt{\mathcal{D}_{ii}}} - \frac{u(\x_j)}{\sqrt{\mathcal{D}_{jj}}}\right)^2
\label{NormalizedDiscreteDirichlet}
\end{equation}
and $\overline{G}: L^2(D) \rightarrow [0, \infty]$ by
\begin{equation} \label{WeightedDirichletNormalized}
 \overline{G}(u):= 
 \begin{cases}
 \displaystyle{\int_{D} \left|\nabla \left(\frac{u}{\sqrt{\rho}} \right) \right|^2 \rho^2(x)dx }  &  \te{if }   u \in H^{1}_{\sqrt{\rho}}(D), \\ 
 \infty & \te{if }  u \in L^2(D) \setminus H^{1}_{\sqrt{\rho}}(D),
\end{cases}
\end{equation}
where $H^1_{\sqrt{\rho}}(D)$ is defined in \eqref{H1sqrtrho}. The following holds. 
\begin{theorem}[$\Gamma$-convergence of normalized Dirichlet energies]
With the same setting as in Theorem \ref{ConvergenceSpectrumTheorem} and the same assumptions on $\eta$  and on $\left\{ \veps_n \right\}_{n \in \N}$, $ \overline{G}_{n,\veps_n}$, defined by \eqref{NormalizedDiscreteDirichlet}, $\Gamma$-converge to $\frac{\sigma_\eta}{\beta_\eta  } \overline{G}$ as $n \rightarrow \infty$ in the $TL^2$-sense, where $\overline{G}$ is defined in \eqref{WeightedDirichletNormalized}, $\sigma_\eta$ and $\beta_\eta$ are given by \eqref{sigma_eta} and \eqref{BetaEta} respectively. Moreover, the sequence of functionals $\left\{\overline{G}_{n , \veps_n} \right\}_{n \in \N}$ satisfies the compactness property with respect to the $TL^2$-metric. That is, every sequence $\left\{ u_n \right\}_{n \in \N}$ with $u_n \in L^2(\nu_n)$ for which
$$  \sup_{n \in \N}\| u_n\|_{\nu_n} < \infty, \quad   \sup_{n \in \N } \overline{G}_{n , \veps_n}(u_n) < \infty,  $$
is precompact in $TL^2$.
 \label{DiscreteGammaNormalized}
\end{theorem}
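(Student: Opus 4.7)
The plan is to reduce this statement to the already-established Theorem \ref{DiscreteGamma} via a pointwise change of variable that absorbs the degree normalization. For $u \in L^2(\nu_n)$ set
\[
w(x_i) := u(x_i) \sqrt{ \frac{n\beta_\eta}{\mathcal{D}^{\veps_n}_{ii}} },
\]
a direct algebraic manipulation using \eqref{NormalizedDiscreteDirichlet} and \eqref{tTV} yields the exact identity
\[
\overline{G}_{n,\veps_n}(u) = \frac{1}{\beta_\eta}\, G_{n,\veps_n}(w).
\]
Once the substitution is shown to be continuous in the $TL^2$-topology, the liminf inequality, the recovery sequence, and the compactness property can all be transferred from $G_{n,\veps_n}$ to $\overline{G}_{n,\veps_n}$.

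The main technical input is a uniform concentration estimate for the graph degrees: under the scaling \eqref{HypothesisEpsilon}, with probability one,
\[
\max_{1\le i\le n} \left|\frac{\mathcal{D}^{\veps_n}_{ii}}{n} - \beta_\eta \rho(x_i)\right| \longrightarrow 0 \qquad \text{as } n\to\infty.
\]
This follows by applying a Bernstein-type bound pointwise to the sum $\mathcal{D}^{\veps_n}_{ii} = \sum_{j\ne i}\eta_{\veps_n}(x_i - x_j)$, whose conditional expectation given $x_i$ equals $(n-1)\int \eta_{\veps_n}(x_i - y)\rho(y)\,dy$, then using (K1)--(K3) and the continuity of $\rho$ to replace the convolution by $\beta_\eta \rho(x_i)$, and taking a union bound over the $n$ vertices. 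The logarithmic slack in \eqref{HypothesisEpsilon} is precisely what is needed to absorb this union bound. Since $\rho$ is bounded below by $m>0$, the estimate implies $\sqrt{n\beta_\eta/\mathcal{D}^{\veps_n}_{ii}} \to 1/\sqrt{\rho(x_i)}$ uniformly in $i$. A direct transport-plan argument then shows that if $u_n \overset{TL^2}{\longrightarrow} u$ and $w_n$ is the transform defined above, then $w_n \overset{TL^2}{\longrightarrow} u/\sqrt{\rho}$, and the inverse substitution $u_n(x_i) = w_n(x_i)\sqrt{\mathcal{D}^{\veps_n}_{ii}/(n\beta_\eta)}$ behaves symmetrically.

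With this dictionary, the three assertions follow cleanly. For the liminf inequality, given $u_n \overset{TL^2}{\longrightarrow} u$, pass $w_n \to u/\sqrt{\rho}$ through Theorem \ref{DiscreteGamma} to obtain
\[
\liminf_{n\to\infty} \overline{G}_{n,\veps_n}(u_n) = \frac{1}{\beta_\eta}\liminf_{n\to\infty} G_{n,\veps_n}(w_n) \ge \frac{\sigma_\eta}{\beta_\eta}\int_D \left|\nabla \frac{u}{\sqrt{\rho}}\right|^2 \rho^2\,dx = \frac{\sigma_\eta}{\beta_\eta}\overline{G}(u),
\]
which simultaneously forces $u/\sqrt{\rho} \in H^1(D)$, i.e. $u \in H^1_{\sqrt{\rho}}(D)$. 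For the limsup half, given $u \in H^1_{\sqrt{\rho}}(D)$ invoke Theorem \ref{DiscreteGamma} at $w := u/\sqrt{\rho} \in H^1(D)$ to produce a recovery sequence $w_n \overset{TL^2}{\longrightarrow} w$, then define $u_n(x_i) := w_n(x_i)\sqrt{\mathcal{D}^{\veps_n}_{ii}/(n\beta_\eta)}$; the identity $\overline{G}_{n,\veps_n}(u_n) = \beta_\eta^{-1} G_{n,\veps_n}(w_n)$ yields the desired limsup. Finally, for compactness, the uniform degree estimate converts bounds on $\|u_n\|_{\nu_n}$ and $\overline{G}_{n,\veps_n}(u_n)$ into the analogous bounds on $w_n$ and $G_{n,\veps_n}(w_n)$, so the compactness statement of Theorem \ref{DiscreteGamma} produces a $TL^2$-convergent subsequence of $w_n$, and multiplication by $\sqrt{\mathcal{D}^{\veps_n}_{ii}/(n\beta_\eta)}$ extracts the desired convergent subsequence of $u_n$.

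I expect the principal obstacle to be not the variational computation but rather the uniform degree estimate: one needs concentration of $\mathcal{D}^{\veps_n}_{ii}/n$ toward $\beta_\eta \rho(x_i)$ uniformly in $i$ at precisely the borderline scaling \eqref{HypothesisEpsilon}. The fact that $\eta$ is allowed to be discontinuous (e.g. indicator kernels) rules out naive Lipschitz arguments and requires combining the Bernstein variance bound with the modulus of continuity of $\rho$ and the scale separation $\veps_n \ll 1$, and then verifying that the logarithmic tolerance in \eqref{HypothesisEpsilon} comfortably absorbs the union bound over the $n$ vertices in dimensions $d\ge 2$.
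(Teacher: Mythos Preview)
Your reduction is exactly the one the paper uses: rewrite $\overline{G}_{n,\veps_n}(u_n)$ as $G_{n,\veps_n}$ evaluated at the degree-renormalized function, observe the matching continuum identity $\overline{G}(u)=G(u/\sqrt{\rho})$, and transfer liminf, limsup, and compactness from Theorem~\ref{DiscreteGamma}. The only substantive difference is how the key technical ingredient --- uniform control of $\mathcal{D}^{\veps_n}_{ii}/n$ near $\beta_\eta\rho(x_i)$ --- is established. You propose a direct Bernstein bound plus a union bound over the $n$ vertices, which does work under \eqref{HypothesisEpsilon} (the variance of each summand is $O(\veps_n^{-d})$, giving tail $\exp(-c\,n\veps_n^d)$, and $n\veps_n^d\gg\log n$ absorbs the union bound). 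The paper instead routes everything through the transportation maps of Proposition~\ref{thm:InifinityTransportEstimate}: writing $\mathcal{D}(x_i)/n=\int_D\eta_{\veps_n}(T_n(x)-T_n(y))\rho(y)\,dy$ via the change of variables \eqref{chofvar}, and then comparing $\eta_{\veps_n}(T_n(x)-T_n(y))$ to slightly dilated/contracted kernels $\underline{\eta}^\alpha_{\hat\veps_n}$, $\overline{\eta}^\alpha_{\tilde\veps_n}$ exactly as in the proof of Theorem~\ref{DiscreteGamma}. This yields both the $L^\infty$ lower bound and the pointwise convergence $\mathcal{D}\circ T_n/n\to\beta_\eta\rho$ without any additional probabilistic input. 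The advantage of the paper's route is economy and uniformity of method: all randomness in the paper is handled once, in Proposition~\ref{thm:InifinityTransportEstimate}, and every subsequent argument is deterministic. Your Bernstein argument is self-contained and perhaps more transparent to a reader unfamiliar with the transport framework, but it introduces a second probabilistic estimate that must be proved to hold on the same probability-one event.
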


Finally, we consider the limit of the spectrum of $\mathcal{N}^{rw}_{n, \veps_n}$, where $\mathcal{N}^{rw}_{n,\veps_n} = \mathcal{D}^{-1} \mathcal{L}_{n, \veps_n}$. Consider the operator $\mathcal{N}^{rw}$ given by
$$ \mathcal{N}^{rw}(u) = - \frac{1}{\rho^2} \divergence(\rho^2 \nabla u ). $$
As discussed in Subsection \ref{SectionSpectrumL}, the eigenvalues of $\mathcal{N}^{rw}$ are equal to the eigenvalues of $\mathcal{N}^{sym}$. Thus from \eqref{ConditionNsymNrw} and from Theorem \ref{ConvergenceSpectrumTheoremNormalized}, it follows that after appropriate rescaling, the eigenvalues of $\mathcal{N}_{n,\veps_n}^{rw}$ converge to the eigenvalues of $\mathcal{N}^{rw}$. Moreover, using again \eqref{ConditionNsymNrw} and Theorem \ref{ConvergenceSpectrumTheoremNormalized}, we have the following convergence of eigenvectors.
\begin{corollary}
Consider the same setting as in Theorem \ref{ConvergenceSpectrumTheorem} and the same assumptions on $\eta$  and on $\left\{ \veps_n \right\}_{n \in \N}$. Then, with probability one, the following statement holds:
 For every $k \in \N$, every  sequence $\left\{ u_k^n \right\}_{n \in \N}$ with $u_k^n$ being an eigenvector of $\mathcal{N}_{n , \veps_n}^{rw}$ associated to the eigenvalue $\tau_k^{(n)}$ and with $\|u_k^n\|_{\nu_n}=1$ is pre-compact in $TL^2$. Additionally, all its cluster points are eigenfunctions of $ \mathcal{N}^{rw}$ with eigenvalue $\tau_k$. Finally the clusters obtained by Algorithm 3 converge to clusters
obtained by spectral clustering corresponding to $\mathcal N^{rw}$ described at the end of
Subsection \ref{DescContClustU}. 
\label{ConvergenceSpectrumTheoremNormalizedRW}
\end{corollary}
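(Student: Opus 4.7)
The approach is to reduce the corollary to Theorem \ref{ConvergenceSpectrumTheoremNormalized} by exploiting the algebraic correspondence \eqref{ConditionNsymNrw} between the spectra of $\mathcal{N}^{sym}_{n,\veps_n}$ and $\mathcal{N}^{rw}_{n,\veps_n}$, together with its continuum analogue. A direct computation from the definitions of the continuum operators gives $\mathcal{N}^{sym}(\sqrt{\rho}\,u) = \sqrt{\rho}\,\mathcal{N}^{rw}(u)$, and the boundary condition in \eqref{PDELimitFunctionalN} for $\sqrt{\rho}\,u$ reduces to the Neumann condition for $u$. Hence $(\tau,u)$ is an eigenpair of $\mathcal{N}^{rw}$ if and only if $(\tau, \sqrt{\rho}\,u)$ is one of $\mathcal{N}^{sym}$; combined with \eqref{ConditionNsymNrw} this yields the eigenvalue convergence at once.

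For the convergence of eigenvectors, given $u_k^n$ with $\|u_k^n\|_{\nu_n}=1$, I would set $w_k^n := (\mathcal{D}^{\veps_n})^{1/2} u_k^n$ and $\tilde w_k^n := w_k^n/\|w_k^n\|_{\nu_n}$; by \eqref{ConditionNsymNrw} the latter is a unit-norm eigenvector of $\mathcal{N}^{sym}_{n,\veps_n}$ at eigenvalue $\tau_k^{(n)}$, so Theorem \ref{ConvergenceSpectrumTheoremNormalized} supplies $TL^2$-precompactness and convergence along a subsequence to an eigenfunction $\tilde w$ of $\mathcal{N}^{sym}$ with eigenvalue $\tau_k$. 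To transfer the convergence back to $u_k^n$ I would use the uniform kernel-density estimate $\max_i |\mathcal{D}^{\veps_n}_{ii}/(n\beta_\eta) - \rho(\x_i)| \to 0$ almost surely, which follows from a standard concentration/covering argument under the scaling \eqref{HypothesisEpsilon} and is in any case already embedded in the proof of Theorem \ref{ConvergenceSpectrumTheoremNormalized}. Since $\rho \ge m > 0$, the multiplier $\sqrt{n\beta_\eta/\mathcal{D}^{\veps_n}_{ii}}$ then converges uniformly to $1/\sqrt{\rho(\x_i)}$ and is uniformly bounded, and multiplication by such a sequence preserves $TL^2$-convergence (direct from the transport-plan definition of $d_{TL^2}$). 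Writing $u_k^n = (\|w_k^n\|_{\nu_n}/\sqrt{\mathcal{D}^{\veps_n}})\,\tilde w_k^n$ and identifying the scalar factor via $\|u_k^n\|_{\nu_n}=1$, I obtain $u_k^n \to c\,\tilde w/\sqrt{\rho}$ in $TL^2$ along the subsequence, which by the correspondence of the first paragraph is an eigenfunction of $\mathcal{N}^{rw}$ with eigenvalue $\tau_k$.

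For the clustering consistency, Algorithm 3 is structurally identical to Algorithm 1 with $\mathcal{N}^{rw}_{n,\veps_n}$ in place of $\mathcal{L}_{n,\veps_n}$, and its continuum analogue from Subsection \ref{DescContClustN} is identical to the one in Subsection \ref{DescContClustU} with eigenfunctions of $\mathcal{N}^{rw}$ in place of those of $\mathcal{L}$. So the proof of Theorem \ref{ConvergenceSpectrumTheorem}(4) applies verbatim once the eigenvector $TL^2$-convergence just established is substituted and the usual ambiguity within each eigenspace is resolved by passing to a further subsequence and reordering. The main technical nuisance is the joint bookkeeping of two sources of randomness — the fluctuating normalization $\|w_k^n\|_{\nu_n}$ and the random multiplier $\sqrt{\mathcal{D}^{\veps_n}_{ii}}$ — when pulling $TL^2$-convergence through the change of variables $w \leftrightarrow \sqrt{\rho}\,u$; the uniform density estimate above is what makes this bookkeeping go through, and apart from it the corollary is a mechanical consequence of Theorem \ref{ConvergenceSpectrumTheoremNormalized}.
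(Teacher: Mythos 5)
Your overall strategy is the same as the paper's: multiply by $\mathcal{D}^{1/2}$ to pass to an eigenvector of $\mathcal{N}^{sym}_{n,\veps_n}$, apply Theorem~\ref{ConvergenceSpectrumTheoremNormalized}, then undo the change of variables. The main cosmetic difference is normalization — you normalize by $\|w_k^n\|_{\nu_n}$ whereas the paper divides by $\sqrt{n}$, checks $\sup_n \|\tilde w_k^n\|_{\nu_n}<\infty$, and then invokes the precompactness in the theorem directly; both variants are fine.

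However, there is a genuine flaw in the technical device you rely on for the transfer. The claimed uniform estimate $\max_i \bigl| \mathcal{D}^{\veps_n}_{ii}/(n\beta_\eta) - \rho(\x_i) \bigr| \to 0$ is \emph{false} for points within an $O(\veps_n)$ layer of $\partial D$: there the kernel is truncated by the domain, so $\mathcal{D}^{\veps_n}_{ii}/n$ is biased downward by a factor bounded away from $1$ (roughly a factor $1/2$ at a flat boundary point), no matter how fine the scaling \eqref{HypothesisEpsilon}. Consequently it is not "embedded in the proof of Theorem~\ref{ConvergenceSpectrumTheoremNormalized}"; what the paper actually proves in Lemma~\ref{LemmaDiscreteGammaNormalized} is (i) a uniform two-sided $L^\infty$ bound on the multiplier $\sqrt{n/\mathcal{D}\circ T_n}$, and (ii) \emph{pointwise, Lebesgue-a.e.} convergence $\int_D \eta_{\veps_n}(T_n(x)-T_n(y))\rho(y)\,dy \to \beta_\eta\rho(x)$, and it is the combination of these (via dominated convergence, not a uniform bound) that yields the equivalence $u_n \overset{TL^2}{\to} u \Longleftrightarrow \bar u_n \overset{TL^2}{\to} u/\sqrt{\beta_\eta\rho}$. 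The fix is straightforward: replace your uniform-convergence claim by a direct appeal to Lemma~\ref{LemmaDiscreteGammaNormalized}; with that substitution your argument, including identification of the limit as $w/\sqrt{\beta_\eta\rho}$ (note the $\sqrt{\rho}$) and the clustering step carried over from Subsection~\ref{sec:csc}, goes through.
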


\subsection{Stability of $k$--means clustering} \label{sec:kmean}

One of the final elements of the proof of the consistency results of spectral clustering (statement 4. in Theorems \ref{ConvergenceSpectrumTheorem} and \ref{ConvergenceSpectrumTheoremNormalized}) requires new results on stability of $k$-means clustering with respect to perturbations of the measure being clustered. These results extend the result of Pollard \cite{pollard1981strong} who proved the consistency of $k$-means clustering. It is important to extend such results because in our setting, at the discrete level, the point set used as input for the $k$-means algorithm is not a sample from a given distribution and thus one can not apply the results in \cite{pollard1981strong} directly. 

Given $k \in \N$ and given a measure $\mu$ on $\R^N$ with finite second moments, let $F_{\mu,k} : \R^{N\times k} \to [0, \infty)$ be defined by
\begin{equation} \label{kmeans}
F_{\mu,k}(z_1, \dots, z_k) := \int d(x, \{z_1, \dots, z_k\})^2 d \mu(x) 
\end{equation}
where $z_i \in \R^N$ for $i=1, \dots, k$. For brevity we write
 $\z$ both for $(z_1, \dots, z_k)$ and $\{z_1, \dots, z_k\}$ where the object considered should be clear from the context. 
The problem of $k$-means clustering is to minimize $F_{\mu,k}$ over $\R^{N\times k}$. 
In Subsection \ref{sec:kmeans} we show the existence of minimizers of the functional \eqref{kmeans}. The main result is the following.
\begin{theorem}[Stability of $k$-means clustering]  \label{th:km}
Let $k \geq 1$. Let $\mu$ be a Borel probability measure on $\R^N$ with finite second moments and whose support has at least $k$ points. Assume $\left\{ \mu_m \right\}_{m \in \N}$ is a sequence of probability measures on $\R^N$ with finite second moments which converges in the Wasserstein distance (see \eqref{ot plan}) to  $\mu$. Then, 
$$ \lim_{m \rightarrow \infty}  \min_{\z} F_{\mu_m, k}(\z)  = \min_{\z}F_{\mu,k}(\z). $$
Moreover, if $\z_m$ is a minimizer of $F_{\mu_m,k}$ for all $m$, then the set $\{\z_m, m \in \N\}$ is precompact in $\R^{N\times k}$ and all of its  accumulation points are minimizers of $F_{\mu,k}$. 
\end{theorem}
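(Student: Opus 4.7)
The plan is to adapt Pollard's consistency argument for $k$--means, which handles empirical samples of $\mu$, to any $W_2$--convergent approximating sequence $\{\mu_m\}$. I would proceed by induction on $k$; the base case $k=1$ is the elementary continuity of the variance under $W_2$--convergence. For the inductive step I need three ingredients: joint continuity of $F$ in $(\mu,\z)$, an upper bound for the minima, and (the core difficulty) precompactness of any sequence of minimizers.

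For continuity, recall that $W_2$--convergence is equivalent to weak convergence together with convergence of second moments, hence $\int f\,d\mu_m \to \int f\,d\mu$ for every continuous $f$ with $|f(x)|\leq C(1+|x|^2)$. Applied to $f_{\z}(x)=d(x,\z)^2$ this yields $F_{\mu_m,k}(\z)\to F_{\mu,k}(\z)$ for each fixed $\z$, and a uniform second--moment estimate upgrades it to uniform convergence on each ball $\{|\z|\leq R\}$; combined with the Lipschitz dependence of $F_{\mu,k}$ on $\z$ this gives joint continuity of $F$ on $\mathcal P_2(\R^N)\times\R^{N\times k}$. The upper bound is then immediate: if $\z^*$ minimizes $F_{\mu,k}$, then $\min F_{\mu_m,k}\leq F_{\mu_m,k}(\z^*)\to F_{\mu,k}(\z^*)=\min F_{\mu,k}$.

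Precompactness of minimizers $\{\z_m\}$ is the heart of the argument and its main obstacle. I would invoke the \emph{gap lemma}: if $\mathrm{supp}(\mu)$ contains at least $k$ points then $\min F_{\mu,k-1}>\min F_{\mu,k}$ (proved by adding a centre at a support point missed by an optimal $(k-1)$--configuration, which strictly decreases the cost in a neighbourhood of that point). Suppose for contradiction that, along a subsequence, some $|z_{m,i}|\to\infty$. After passing to a further subsequence, partition $\{1,\dots,k\}$ into an ``escaping'' set $U$ and a ``bounded'' set $S$. If $S=\emptyset$ then $d(x,\z_m)\to\infty$ pointwise, and Fatou applied to the tight family $\mu_m$ forces $F_{\mu_m,k}(\z_m)\to\infty$, contradicting the trivial uniform bound $F_{\mu_m,k}(\z_m)\leq\int|x|^2\,d\mu_m$. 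Otherwise $k':=|S|\in\{1,\dots,k-1\}$; let $\tilde \z_m$ denote the bounded centres, converging to some $\tilde \z\in\R^{N\times k'}$. Using the uniform tightness built into $W_2$--convergence, choose a compact $K\subset\R^N$ with $\int_{K^c}|x|^2\,d\mu_m$ arbitrarily small, uniformly in $m$. For $m$ large the escaping centres are so far from $K$ that $d(x,\z_m)=d(x,\tilde\z_m)$ for every $x\in K$, which yields
\[ F_{\mu_m,k}(\z_m)\;\geq\; F_{\mu_m,k'}(\tilde \z_m) - \int_{K^c} d(x,\tilde \z_m)^2\,d\mu_m \;\geq\; \min F_{\mu_m,k'} - o(1). \]
By the induction hypothesis $\min F_{\mu_m,k'}\to\min F_{\mu,k'}\geq\min F_{\mu,k-1}>\min F_{\mu,k}$, contradicting the upper bound already established.

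With precompactness in hand, any subsequential limit $\z$ of $\{\z_m\}$ satisfies $F_{\mu,k}(\z)=\lim_m F_{\mu_m,k}(\z_m)\geq \min F_{\mu,k}$ by the joint continuity, so $\z$ is a minimizer; since every subsequence of $\{\z_m\}$ contains a further convergent subsequence with minimizer as its limit, $\{\z_m\}$ is precompact with minimizer accumulation points, and $\min F_{\mu_m,k}\to\min F_{\mu,k}$. This closes the induction. The main technical difficulty is the precompactness step: excluding the scenario that $W_2$--mass can ``hide'' at infinity and trap a centre there requires both the strict gap $\min F_{\mu,k-1}>\min F_{\mu,k}$ (which is exactly why the hypothesis $|\mathrm{supp}(\mu)|\geq k$ appears) and the uniform second--moment tightness characteristic of $W_2$ convergence, stronger than what mere weak convergence of $\mu_m$ would give.
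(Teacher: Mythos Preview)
Your argument is correct and follows the same overall architecture as the paper: both hinge on the strict gap $\min F_{\mu,l}>\min F_{\mu,k}$ for $l<k$ (which is where the hypothesis $|\mathrm{supp}(\mu)|\geq k$ enters) and both obtain precompactness of minimizers by contradiction, ruling out the scenario in which some centres escape to infinity while the remaining ones realize a $k'$--means cost for some $k'<k$.

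The execution differs, however. The paper does not use induction or a soft tightness/compact--set argument. Instead it derives, via the optimal coupling and Cauchy--Schwarz, the quantitative and \emph{uniform in~$\z$} bound
\[
|F_{\mu,k}(\z)-F_{\nu,k}(\z)|\;\leq\; d_2(\mu,\nu)\Bigl(2\min\bigl\{\sqrt{F_{\mu,k}(\z)},\sqrt{F_{\nu,k}(\z)}\bigr\}+d_2(\mu,\nu)\Bigr),
\]
which immediately yields $\min F_{\mu_m,k}\to\min F_{\mu,k}$ \emph{for every $k$ simultaneously} (no induction needed), and in the escape scenario gives directly $F_{\mu_m,k}(\z_m)\to F_{\mu,l}(\{z_1,\dots,z_l\})\geq \min F_{\mu,l}$, producing the contradiction in one line. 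Your route---local uniform continuity of $\z\mapsto F_{\mu,k}(\z)$ plus $W_2$--tightness to control the contribution outside a compact set, combined with an inductive call to the convergence of $\min F_{\mu_m,k'}$---is sound and closer in spirit to Pollard's original argument; the paper's transport estimate is a slicker substitute that bypasses both the induction and the $K/K^c$ splitting.
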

We present the proof of the previous Theorem in Subsection \ref{sec:kmeans}.

The clusters corresponding to $\z$ minimizing the $F_{\mu,k}$ are the Voronoi cells:  
$G_i =  \{x \in \R^N \::\: d(x, \z) = d(x, z_i)  \}$. 
We prove in Lemma \ref{lem:nomassbdry} that the measure of the boundaries of clusters is zero, that is we show that if $i \neq j$ then $\mu(G_i \cap G_j) = 0$. 
In other words it is irrelevant to which cluster are the points on the boundary assigned to and because of this, we are allowed to define the clusters to be either open or closed sets.  We furthermore note that the associated measures, $\mu_{\llcorner G_i}$ are mutually orthogonal and  satisfy $\sum_i \mu_{\llcorner G_i} = \mu$. 

A consequence of Theorem \ref{th:km} is that as the cluster centers converge so do the measures representing the clusters.

\begin{corollary} \label{cor:km}
Under the assumptions of Theorem \ref{th:km}, if $\z^m$ converge along a subsequence to $\z$ then 
the measures $\mu_{m,\llcorner G_i^m}$ converge weakly in the sense of measures to 
$\mu_{\llcorner G_i}$ as $m \to \infty$ for all $i=1, \dots, k$. 
\end{corollary}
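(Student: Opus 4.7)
The plan is to show that for every bounded continuous test function $\varphi\colon \R^N \to \R$,
\begin{equation*}
\int \varphi\,\charf_{G_i^m}\, d\mu_m \longrightarrow \int \varphi\,\charf_{G_i}\, d\mu,
\end{equation*}
which is precisely the weak convergence of $\mu_{m,\llcorner G_i^m}$ to $\mu_{\llcorner G_i}$. I would decompose the difference by inserting $\pm \int \varphi\, \charf_{G_i}\, d\mu_m$, obtaining a ``measure'' piece and a ``geometric'' piece to analyze separately.

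The measure piece $\int \varphi\,\charf_{G_i}\, d\mu_m - \int \varphi\,\charf_{G_i}\, d\mu$ involves a bounded function $\varphi\,\charf_{G_i}$ whose set of discontinuities lies in $\partial G_i \subseteq \bigcup_{j\neq i}(G_i\cap G_j)$. By Lemma \ref{lem:nomassbdry} this is a $\mu$-null set, and since Wasserstein convergence implies weak convergence, the Portmanteau theorem gives that this piece tends to zero.

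The geometric piece $\int \varphi(\charf_{G_i^m} - \charf_{G_i})\, d\mu_m$ is bounded by $\|\varphi\|_\infty \mu_m(G_i^m \triangle G_i)$. The key observation I would use is the slab-containment
\begin{equation*}
G_i^m \triangle G_i \;\subseteq\; S_{\varepsilon_m} \;:=\; \bigcup_{j\neq i}\bigl\{ x : |d(x,z_i) - d(x,z_j)| \leq \varepsilon_m \bigr\}, \qquad \varepsilon_m := 2 \max_\ell |z_\ell^m - z_\ell|,
\end{equation*}
which follows from the $1$-Lipschitz dependence of $d(x,\cdot)$: any strict inequality $d(x,z_i)>d(x,z_j)$ witnessing $x\notin G_i$ can only be reversed for the perturbed centers $\z^m$ if its slack is at most $\varepsilon_m$, and symmetrically for $x \in G_i\setminus G_i^m$. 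For any fixed $\varepsilon_0 > 0$, once $\varepsilon_m < \varepsilon_0$ the closed set $S_{\varepsilon_0}$ contains $S_{\varepsilon_m}$, and Portmanteau for closed sets gives $\limsup_m \mu_m(S_{\varepsilon_0}) \leq \mu(S_{\varepsilon_0})$. Sending $\varepsilon_0 \downarrow 0$, the nested family $\{S_{\varepsilon_0}\}$ decreases to $\bigcup_{j\neq i}\{x: d(x,z_i)=d(x,z_j)\} \subseteq \bigcup_{j\neq i}(G_i\cap G_j)$, which is $\mu$-null, so the geometric piece vanishes in the limit.

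The main obstacle is that both the set $G_i^m$ and the measure $\mu_m$ vary with $m$, so neither Portmanteau nor pointwise indicator convergence applies directly. The slab-sandwich estimate is what decouples the two sources of variation: the geometric fluctuation of the Voronoi cells is quantified by $\varepsilon_m$, while Portmanteau is invoked only against the \emph{fixed} closed set $S_{\varepsilon_0}$, after which $\varepsilon_0$ is sent to zero using the non-degeneracy of the limit partition provided by Lemma \ref{lem:nomassbdry}.
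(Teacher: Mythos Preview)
Your approach is in the same spirit as the paper's, which only gives a one-sentence sketch invoking ``continuous dependence of Voronoi boundaries on centers'' and the Levy--Prokhorov characterization of weak convergence. Your decomposition into a measure piece and a geometric piece, with Portmanteau applied against a fixed closed slab, is exactly the natural way to make that sketch rigorous.

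There is, however, one incorrect step. You claim that
\[
\bigcap_{\varepsilon_0>0} S_{\varepsilon_0} \;=\; \bigcup_{j\neq i}\{x: d(x,z_i)=d(x,z_j)\} \;\subseteq\; \bigcup_{j\neq i}(G_i\cap G_j).
\]
The inclusion is backwards: the bisecting hyperplane $P_{ij}=\{d(\cdot,z_i)=d(\cdot,z_j)\}$ generally extends into other Voronoi cells $G_\ell$ with $\ell\neq i,j$, whereas $G_i\cap G_j=\{x:d(x,z_i)=d(x,z_j)=d(x,\z)\}$ is only the portion of $P_{ij}$ that actually forms part of the Voronoi boundary. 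Lemma~\ref{lem:nomassbdry} gives $\mu(G_i\cap G_j)=0$, but says nothing about $\mu(P_{ij}\setminus(G_i\cap G_j))$, so the argument does not close as written.

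The fix is to sharpen the slab. Replace $S_\varepsilon$ by
\[
T_\varepsilon \;:=\; \bigl\{\,x:\ \exists\, j\neq\ell \text{ with } d(x,z_j)\leq d(x,\z)+\varepsilon \text{ and } d(x,z_\ell)\leq d(x,\z)+\varepsilon \,\bigr\}.
\]
The same Lipschitz estimate you used shows $G_i^m\triangle G_i\subseteq T_{\varepsilon_m}$: if $x\in G_i\setminus G_i^m$ then $z_i$ is optimal and the index $j$ witnessing $x\notin G_i^m$ satisfies $d(x,z_j)\leq d(x,\z)+\varepsilon_m$; if $x\in G_i^m\setminus G_i$ then the optimal index $\ell\neq i$ and $z_i$ itself satisfy the analogous bound. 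Each $T_{\varepsilon_0}$ is closed, the family decreases as $\varepsilon_0\downarrow 0$, and now
\[
\bigcap_{\varepsilon_0>0} T_{\varepsilon_0} \;=\; \bigcup_{j\neq\ell}(G_j\cap G_\ell),
\]
which is $\mu$-null by Lemma~\ref{lem:nomassbdry}. With this refinement your Portmanteau argument goes through unchanged.
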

The corollary follows from Theorem \ref{th:km}, since the convergence of centers of Voronoi cells, along with the fact that the boundaries of cells change continuously with respect to cell centers implies that the measures converge in Levy-Prokhorov metric, which characterizes the weak convergence of measures. 
\nc

\subsection{Discussion}\label{Discussion}

Theorems 
\ref{ConvergenceSpectrumTheorem},
\ref{ConvergenceSpectrumTheoremNormalized}, and \ref{ConvergenceSpectrumTheoremNormalizedRW} establish the consistency of spectral clustering.
 An important difference between our work and the available consistency results is that
 we provide an explicit range of rates 
 at which $\veps_n$ (the length scale used to construct the graph) is allowed to converge to $0$ as $n \to \infty$. 
In \cite{vonLuxburg} the parameter $\veps$ is not allowed to depend on $n$. As a result, the functional obtained in the limit is a non-local (i.e. integral, rather than differential) operator. Operators with very different spectral properties are obtained in the limit depending on whether one uses a normalized or unnormalized graph Laplacian. In particular, it is argued that normalized spectral clustering is more advantageous than the unnormalized clustering, because in the normalized case the spectrum of the limiting operator is better behaved and the spectral consistency in the unnormalized case is only guaranteed in restrictive settings. We remark that our results show that when the parameter $\veps_n$ decays to zero such difference between the normalized and the unnormalized settings disappears and the limiting operators in both cases have a discrete spectrum.

When constructing the graph it is advantageous, from the point of view of computational complexity, to 
have fewer edges (that is to take $\veps$ small).
However below some threshold the graph thus constructed does not contain enough information to accurately  recover the geometry of the underlying ground-truth distribution. 
How large $\veps$ should be taken depends on $n$, the number of data points available. 
As number of data points increases $\veps$ converges to zero.
We remark that for $d \geq 3$,  the results of Theorems 
\ref{ConvergenceSpectrumTheorem},
\ref{ConvergenceSpectrumTheoremNormalized}, and \ref{ConvergenceSpectrumTheoremNormalizedRW}
are (almost) optimal in the sense of scaling. Namely,  we show that if the kernel $\eta$ used to construct the graph is compactly supported, then convergence holds if
$\veps_n \gg \frac{\log(n)^{1/d}}{n^{1/d}}$, while if $\veps_n \ll \frac{\log(n)^{1/d}}{n^{1/d}}$
the convergence does not hold. \nc This follows from the results on the connectivity of random geometric graphs in \cite{GuptaKumar,Goel,PenroseBook} which show that with high probability for large  $n$ the graph thus obtained is disconnected.  

Finally, we remark that our results are essentially independent of the kernel used to construct the weights. For example, when the points are sampled from the uniform distribution on a domain $D$, our results show that the spectra of the graph Laplacians converge to the spectrum of the Laplacian on the domain $D$, regardless of the kernel used. 
\subsection{Outline of the approach.} \label{OutlineApproach}

Theorem \ref{ConvergenceSpectrumTheorem} is based on the variational convergence of the energies $G_{n , \veps_n}$ towards $ \sigma_\eta G $, together with the corresponding compactness result (Theorem \ref{DiscreteGamma}). In order to show Theorem \ref{DiscreteGamma}, we first introduce the functional $G_{\veps_n}: L^2(D,\rho) \rightarrow [0,\infty)$  given by,  
\begin{equation}
G_{\veps_n}(u):= \frac{1}{\veps_n^2}\int_{D}\int_{D} \eta_{\veps_n}(x-y)|u(x)-u(y)|^2\rho(x) \rho(y) dxdy,
\label{NonlocalTV}
\end{equation}
which serves as an intermediate object between the functionals $G_{n,\veps_n}$ and $G$. It is important to observe that the argument of $G_{n,\veps_n}$ is a function $u_n$  supported on the data points, whereas the argument of $G_{\veps_n}$ is a $L^2(D,\rho)$ function; in particular a function defined on $D$. The functional $G_{\veps_n}$ is a \textit{non-local} functional, where the term \textit{non-local} refers to the fact that differences of a given function on a $\veps_n$-neighborhood are averaged, which contrasts the local approach of averaging derivatives of the given function. Non-local functionals have been of interest in the last decades due to their wide range of applications which includes phase transitions, image processing and PDEs.    From a statistical point of view, for a fixed function $u:D \rightarrow \R$, $G_{\veps_n}(u)$ is nothing but the expectation of $G_{n, \veps_n}(u)$. On the other hand, the functional $G_{\veps_n}$ is relevant for our purposes because not only it approximates $G$ defined in \eqref{WeightedDirichlet} in a pointwise sense,  but it also approximates it in a variational sense (as the parameter $\veps_n$ goes to zero). More precisely the following holds.

\begin{proposition}
Consider an open, bounded domain  $D$  in $\mathbb{R}^d$ with Lipschitz boundary. 
Let $\rho: D \to \R$ be continuous and bounded below and above by positive constants. Le $\left\{ \veps_k \right\}_{k \in \N}$ be a sequence of positive numbers converging to zero. Then, $\left\{ G_{\veps_k} \right\}_{k\in \N}$ (defined in \eqref{NonlocalTV}) $\Gamma$-converges with respect to the $L^2(D,\rho)$-metric to $\sigma_{\eta} G$, where $\sigma_\eta$ is defined in \eqref{sigma_eta} and $G$ is defined in \eqref{WeightedDirichlet}. Moreover, the functionals $\left\{ G_{\veps_k} \right\}_{k\in \N}$ satisfy the compactness property, with respect to the $L^2(D,\rho)$-metric. That is, every sequence $\left\{ u_k \right\}_{n \in \N}$ with $u_k \in L^2(D,\rho)$ for which
$$  \sup_{k \in \N}\| u_k\|_{L^2(D,\rho)} < \infty, \quad   \sup_{k \in \N } G_{\veps_k}(u_n) < \infty,  $$
is precompact in $L^2(D,\rho)$. 
Finally, for every $u \in L^2(D,\rho)$ 
\begin{equation}
\lim_{n \rightarrow \infty}G_{\veps_k}(u) = \sigma_\eta G(u).
\label{PointwiseNonlocal}
\end{equation} 
\label{NonlocalDirichlet}
\end{proposition}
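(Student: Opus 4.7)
Since $\rho$ is continuous and bounded between $m$ and $M$ on $\bar D$, the $L^2(D,\rho)$-topology coincides with the standard $L^2(D)$-topology, so the weight enters only as a bounded multiplier inside integrals. The argument has three ingredients: a direct pointwise computation, a $\Gamma$-limsup obtained via a constant recovery sequence, and the harder $\Gamma$-liminf together with compactness, deduced from Bourgain--Brezis--Mironescu--Ponce (BBM) type results. For pointwise convergence, substitute $h=(y-x)/\veps_k$ to obtain
\[
G_{\veps_k}(u) = \int_D \rho(x)\int_{(D-x)/\veps_k}\eta(h)\left|\frac{u(x+\veps_k h)-u(x)}{\veps_k}\right|^2\rho(x+\veps_k h)\,dh\,dx.
\]
For $u\in C^2(\bar D)$ the difference quotient converges pointwise to $\nabla u(x)\cdot h$ and is dominated by $\|\nabla u\|_\infty |h|$, which is integrable against $\eta(h)$ by (K3); dominated convergence combined with $\int \eta(h) h_i h_j\,dh = \sigma_\eta \delta_{ij}$ (radial symmetry of $\eta$) gives $G_{\veps_k}(u)\to \sigma_\eta G(u)$. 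For general $u\in H^1(D)$, approximate by $C^2(\bar D)$ functions (possible because of the Lipschitz boundary) and use the uniform seminorm bound $G_{\veps_k}(v)\le C\|\nabla v\|_{L^2}^2$, obtained from $|v(x+\veps_k h)-v(x)|^2 \le \veps_k^2 |h|^2 \int_0^1 |\nabla v(x+t\veps_k h)|^2\,dt$ and Fubini.

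\textbf{$\Gamma$-limsup.} For $u\in H^1(D)$ the constant sequence $u_k\equiv u$ is a recovery sequence by the pointwise convergence above; for $u\in L^2(D)\setminus H^1(D)$ there is nothing to prove since the target is infinite.

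\textbf{$\Gamma$-liminf and compactness.} Assume $u_k \to u$ in $L^2(D,\rho)$ with $L:=\liminf_k G_{\veps_k}(u_k)<\infty$. Using the Lipschitz boundary, extend each $u_k$ to $\tilde u_k$ on a strictly larger domain $\tilde D\supset\supset D$ via a bounded linear operator that controls both the $L^2$-norm and the nonlocal seminorm (for instance a reflection-type extension). The lower bound $\rho\ge m$ then gives $\iint_{\tilde D\times\tilde D}\eta_{\veps_k}(x-y)|\tilde u_k(x)-\tilde u_k(y)|^2/\veps_k^2\,dx\,dy \le CL/m^2$. The classical Ponce compactness result and the accompanying BBM-type liminf inequality, both valid for kernels satisfying (K1)--(K3), now apply on $\tilde D$ and yield: (a) precompactness of $\{\tilde u_k\}$ in $L^2(\tilde D)$, hence of $\{u_k\}$ in $L^2(D,\rho)$; and (b) $u\in H^1(D)$ with the unweighted inequality $\sigma_\eta\int_D|\nabla u|^2\,dx \le \liminf_k \iint_{D\times D}\eta_{\veps_k}(x-y)|u_k(x)-u_k(y)|^2/\veps_k^2\,dx\,dy$. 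To reinstate the weight, use the uniform continuity of $\rho$ on $\bar D$: since the effective support of the kernel is $\{|x-y|\lesssim \veps_k\}$, we have $|\rho(x)\rho(y)-\rho(x_0)^2|\le \omega_\rho(\delta)+\omega_\rho(\veps_k)$ on balls $B(x_0,\delta)$. Partitioning $D$ into such small balls, applying the unweighted liminf on each ball multiplied by $\rho(x_0)^2$, then passing $k\to\infty$ and refining the partition $\delta\to 0$ yields $\sigma_\eta \int_D |\nabla u|^2 \rho^2\,dx \le L$.

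\textbf{Main obstacle.} The crux is the $\Gamma$-liminf with the non-constant weight $\rho$, which is only continuous and cannot be differentiated. The resolution is a two-scale localization: at the mesoscopic scale one exploits the small oscillation of $\rho$ on balls of radius $\delta\gg\veps_k$ to replace $\rho(x)\rho(y)$ by a constant, while at the microscopic scale $\veps_k$ one applies the sharp unweighted BBM asymptotics, from which the constant $\sigma_\eta$ emerges via the second-moment identity for $\eta$. Constructing a nonlocal-energy-controlled extension across $\partial D$ is another technical point, but is standard for Lipschitz domains.
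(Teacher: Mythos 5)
Your proof follows essentially the same route as the references the paper delegates to: \cite{Ponce} establishes the constant-weight case on Lipschitz domains, and Section~4 of \cite{GTS} carries out the adaptation to non-constant $\rho$ via the freezing-the-coefficient (two-scale localization) argument you describe. The overall structure --- pointwise convergence via the change of variables $h=(y-x)/\veps$ and dominated convergence using (K3) and the second-moment identity, $\Gamma$-limsup via a constant recovery sequence, $\Gamma$-liminf via localization plus the BBM/Ponce liminf inequality, compactness via the Ponce estimate --- is correct. One caveat: the nonlocal-energy-controlled extension across $\partial D$ that you invoke is more delicate than ``standard for Lipschitz domains.'' You need an extension operator that controls the nonlocal seminorm \emph{uniformly in~$\veps$}; this does not follow from a generic Sobolev extension, but requires a geometric comparison of the kernel before and after a bi-Lipschitz reflection, which hinges specifically on the monotonicity assumption (K2). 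That said, the detour is also unnecessary: the compactness and liminf results of \cite{Ponce} are stated directly for Lipschitz domains, so you can work on $D$ without extending (both for those two steps and for the uniform seminorm bound $G_{\veps_k}(v)\le C\|\nabla v\|_{L^2}^2$, which as written suffers from the same segment-exits-$D$ boundary issue that the extension was meant to fix). For the freezing step, note additionally that once you partition $D$ into balls the constant-coefficient liminf you need can be taken from the convex-domain result in \cite{AB2}, which is a mild simplification.
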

\begin{proof}
When $\rho$ is constant, the proof may be found in the Appendix of \cite{AB2} in case $D$ is a convex set, and in \cite{Ponce} for a general domain $D$ satisfying the assumptions in the statement. In case $\rho$ is not constant the results are obtained in a  straightforward way by adapting the arguments presented in \cite{Ponce} just as it is done in Section 4 in \cite{GTS} when studying the variational limit of the non-local functional
$$ TV_{\veps}(u):= \frac{1}{\veps_n}\int_{D}\int_{D} \eta_{\veps}(x-y)|u(x)-u(y)|\rho(x) \rho(y) dxdy,  $$
which is the $L^1$ analogue of $G_{\veps}$.
\end{proof}

As observed earlier, the argument of  $G_{n , \veps_n}$ is a function $u_n$ supported on the data points, while the argument of $G_{\veps_n}$ is an $L^2(D)$ function. For a function $u_n$ defined on the set $V= \left\{ \x_1, \dots, \x_n \right\}$, the idea is to associate an $L^2(D)$ function $\tilde{u}_n$ which approximates $u_n$ in the $TL^2$-sense and is such that $G_{\veps_n}(\tilde{u}_n)$ is comparable to $G_{n , \veps_n}(u_n)$. The purpose of doing this is to use Proposition  \ref{NonlocalDirichlet}. We construct the approximating function $\tilde{u}_n$ by using transportation maps (i.e. measure preserving maps) between the measure $\nu$ and $\nu_n$. More precisely, we set $\tilde u_n = u_n \circ T_n$ where $T_n$ is a transportation map between $\nu$ and $\nu_n$ which moves mass as little as possible. The estimates on how far the mass needs to be moved were known in the literature when $\rho$ is constant and when the domain $D$ is the unit cube $(0,1)^d$ (see \cite{LeightonShor,Talagrand,TalagrandGenericChain,TalagrandNewBook} for $d=2$ and \cite{ShorYukich} for $d\geq 3$). In \cite{W8L8} these estimates are extended to general domains $D$ and densities $\rho$ satisfying \eqref{DensityBound}. Indeed, the following is proved.

\begin{proposition}
Let $D\subseteq \R^d$ be a bounded, connected, open set with Lipschitz boundary. Let $\nu$ be a probability measure on $D$ with density $\rho: D \rightarrow (0,\infty)$ satisfying \eqref{DensityBound}. Let $\x_1, \dots, \x_n, \dots$ be i.i.d. samples from $\nu$. Let $\nu_n$ be the empirical measure associated to the $n$ data points.
Then, for any fixed $\alpha >2$, except on a set with probability $O(n^{-\alpha/2})$, there exists a transportation map $T_n: D \rightarrow D$ between the measure $\nu$ and the measure  $\nu_n$ (denoted $T_{n \sharp} \nu = \nu_n$) such that
$$ \| T_n - Id \|_{\infty} \leq C
\begin{cases}
 \frac{\ln(n)^{3/4}}{n^{1/2}},  & \te{if } d =2, \medskip \\
 \frac{\ln(n)^{1/d}}{n^{1/d}},  & \te{if } d \geq 3,
\end{cases}
$$
where $C$ depends only on $\alpha $, $D$, and the constants $m,M$.
\label{TheoremMatchings}
\end{proposition}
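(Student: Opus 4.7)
The plan is to reduce the claim to the classical matching estimates for the uniform measure on the unit cube $Q=(0,1)^d$. The results of Leighton--Shor (for $d=2$) and Shor--Yukich (for $d\geq 3$), refined by Talagrand, assert that for $Y_1,\dots,Y_n$ i.i.d.\ uniform on $Q$ with empirical measure $\mu_n$, except on a set of probability $O(n^{-\alpha/2})$ there exists a measure-preserving map $S_n:Q\to Q$ sending $\mathrm{Leb}|_Q$ to $\mu_n$ with $\|S_n-\mathrm{Id}\|_\infty$ bounded by the quantities appearing in the statement. Everything else is a reduction of $(D,\nu)$ to this canonical setting.

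I would proceed in two stages. First (constant density, general Lipschitz domain $D$): use the Lipschitz boundary hypothesis to cover $\overline{D}$ by finitely many bi-Lipschitz images of $Q$, then transfer the cube estimate through each chart and patch with a loss of only a multiplicative constant depending on $D$. Second (variable density with $m\leq\rho\leq M$): introduce a dyadic decomposition of $D$ refined to the scale $r_n\asymp(\log n/n)^{1/d}$. On each bottom cell $Q_k$ the density $\rho$ is nearly constant, so the normalized restriction $\nu|_{Q_k}/\nu(Q_k)$ is comparable to the uniform measure, and the previous step produces a local transport to the normalized empirical measure of the points in $Q_k$. Gluing the local maps gives a transport between $\nu$ and a measure that differs from $\nu_n$ only through the mismatch between the counts $N_k=\#\{\mathbf{x}_i\in Q_k\}$ and their expectations $n\nu(Q_k)$, controlled by Bernstein's inequality at size $O((N_k\log n)^{1/2})$. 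I would correct this residual via a dyadic cascade: at each scale $2^{-\ell}r_0$, pair up surplus and deficit sibling cells and redirect excess points across distance $O(2^{-\ell}r_0)$.

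The main obstacle is controlling the $L^\infty$ cost of this dyadic correction \emph{uniformly} over the $O(n)$ cells at $O(\log n)$ scales: because $W_\infty$ is sensitive to the worst transported mass, a naive union bound degrades the rate. Overcoming this requires Bernstein tails with $\alpha$ taken large enough that the scales--and--cells union bound still gives the $O(n^{-\alpha/2})$ probability, together with a generic--chaining organization ensuring the worst displacement is dominated by contributions at the smallest active scale. The case $d=2$ is significantly more delicate than $d\geq 3$: it belongs to the Koml\'os--Major--Tusn\'ady regime, and the extra $(\log n)^{1/4}$ factor in the bound is genuinely necessary, so one must invoke the sharper two--dimensional matching estimates of Ajtai--Koml\'os--Tusn\'ady and Talagrand for the bottom-scale step. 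The full execution of this reduction together with the multi--scale concentration argument is carried out in the authors' companion work \cite{W8L8}, from which Proposition \ref{TheoremMatchings} follows.
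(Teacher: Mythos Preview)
The paper does not prove this proposition; it simply imports it from the companion work \cite{W8L8}, after remarking that the cube/uniform case was already known from \cite{LeightonShor,Talagrand,TalagrandGenericChain,TalagrandNewBook} ($d=2$) and \cite{ShorYukich} ($d\geq 3$). Your proposal ultimately does the same thing---it closes by deferring to \cite{W8L8}---so in that sense it matches the paper exactly.

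What you add beyond the paper is a sketch of how the reduction in \cite{W8L8} proceeds: bi-Lipschitz charts to pass from a Lipschitz domain to cubes, a dyadic decomposition down to scale $(\log n/n)^{1/d}$ to handle variable density, Bernstein bounds on the cell counts, and a multiscale rebalancing cascade. This is a reasonable outline consistent with the literature the paper cites, and it is more informative than what the paper itself provides. Since both you and the paper treat the statement as a black box from \cite{W8L8}, there is nothing to correct; your sketch is a bonus rather than a competing proof.
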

From the previous result, Chebyshev's inequality and Borel-Cantelli lemma one obtains the following rate of convergence of the $\infty$-\textit{transportation} distance between the empirical measures $\nu_n$ and the measure $\nu$ (see \cite{W8L8} for details associated to Proposition \ref{TheoremMatchings}).
\begin{proposition} \label{thm:InifinityTransportEstimate}
 Let $D$ be an open, connected and bounded subset of $\R^d$ which has Lipschitz boundary. Let $\nu$ be a probability measure on $D$ with density $\rho$ satisfying \eqref{DensityBound}. Let  $\x_1, \dots, \x_n, \dots$ be a sequence of independent samples from $\nu$ and let $\nu_n$ be the associated empirical measures \eqref{empirical}. 
Then, there is a constant $C>0$ such that with probability one, there exists a sequence of transportation maps $\left\{ T_n \right\}_{n \in \N}$ from $\nu$ to $\nu_n$   ($T_{n \sharp} \nu = \nu_n$) and  such that: 
\begin{alignat}{2}
\te{if } d&=2 \te{ then}\qquad  & \limsup_{n \rightarrow \infty}   \frac{n^{1/2} \|Id - T_n\|_\infty }{(\log n)^{3/4}} & \leq C\label{InifinityTransportEstimate d=2}\\
\te{and\ if } d & \geq 3 \te{ then} & \limsup_{n \rightarrow \infty}  \frac{n^{1/d} \|Id - T_n\|_\infty }{(\log n)^{1/d}} & \leq C.
\label{InifinityTransportEstimate d>2}
\end{alignat}
\end{proposition}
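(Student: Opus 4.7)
The proposition is an essentially direct consequence of Proposition \ref{TheoremMatchings}, and the plan is to upgrade the "with high probability" statement there to an almost-sure $\limsup$ bound via the first Borel--Cantelli lemma. There are three bookkeeping steps.

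First, fix some $\alpha > 2$ (say $\alpha=3$) and write $r_n$ for the rate $(\log n)^{3/4}/n^{1/2}$ when $d=2$ and $(\log n)^{1/d}/n^{1/d}$ when $d\ge 3$. Let $C_0$ be the constant supplied by Proposition \ref{TheoremMatchings} for this choice of $\alpha$, and define the bad event
\[ A_n := \{\omega : \text{no transport map } T : D \to D \text{ with } T_\sharp \nu = \nu_n \text{ satisfies } \|T - \mathrm{Id}\|_\infty \le C_0 r_n \}. \]
Proposition \ref{TheoremMatchings} gives $\mathbb{P}(A_n) \le C' n^{-\alpha/2}$ for a constant $C'$ depending only on $\alpha$, $D$, $m$, $M$.

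Second, since $\alpha/2 > 1$, the series $\sum_n \mathbb{P}(A_n)$ converges. The first Borel--Cantelli lemma then yields $\mathbb{P}(A_n \text{ i.o.}) = 0$, so outside a $\mathbb{P}$-null set there exists a (random) integer $N = N(\omega)<\infty$ such that for every $n \ge N$ one may choose a transportation map $T_n$ with $\|T_n - \mathrm{Id}\|_\infty \le C_0 r_n$. For each $n < N$, choose $T_n$ to be any transportation map from $\nu$ to $\nu_n$; such a map always exists because $\nu$ is absolutely continuous with positive density, so $D$ can be partitioned into Borel sets $\{B_i^n\}_{i=1}^n$ with $\nu(B_i^n) = 1/n$, and one sets $T_n(x) = \x_i$ for $x \in B_i^n$. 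Since only finitely many indices are affected, they do not influence the limit superior, and one obtains
\[ \limsup_{n \to \infty} \frac{\|T_n - \mathrm{Id}\|_\infty}{r_n} \le C_0 \]
on the full-measure event, which is exactly \eqref{InifinityTransportEstimate d=2}--\eqref{InifinityTransportEstimate d>2} with $C = C_0$.

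All genuine difficulty resides in Proposition \ref{TheoremMatchings} itself, whose proof leverages the Leighton--Shor matching theorem in $d=2$, the Shor--Yukich estimate for $d \ge 3$, and the extension to Lipschitz domains and variable densities in \cite{W8L8}. Once that proposition is invoked as a black box, the step above is routine: the only mild subtlety is defining $T_n$ on the exceptional null event and for the initial indices before $N$, and that is handled trivially by noting that Monge couplings from an absolutely continuous measure to an atomic measure always exist. The mention of Chebyshev's inequality in the text is an alternative route: if one had only a moment bound $\mathbb{E}\|T_n - \mathrm{Id}\|_\infty^p$, Chebyshev would convert it into a tail bound of the form $O(n^{-\alpha/2})$ and the Borel--Cantelli step would proceed identically.
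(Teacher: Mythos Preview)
Your proposal is correct and matches the paper's approach: the paper simply states that the proposition follows from Proposition~\ref{TheoremMatchings} together with (Chebyshev and) Borel--Cantelli, and refers to \cite{W8L8} for details. Your write-up is in fact more careful than the paper's one-line justification, and your observation that Chebyshev is superfluous here (since Proposition~\ref{TheoremMatchings} already delivers a tail bound rather than a moment bound) is accurate.
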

As shown in Section \ref{ProofsMainTheorems}, Proposition \ref{NonlocalDirichlet} and Proposition \ref{thm:InifinityTransportEstimate} are at the backbone of Theorem \ref{DiscreteGamma}. Schematically, 
$$  G_{\veps} \overset{\Gamma}{\longrightarrow}  \sigma_\eta G \quad \text{ in }L^2 \quad  + \quad  \text{Proposition } \ref{thm:InifinityTransportEstimate} \Longrightarrow  G_{n, \veps_n} \overset{\Gamma}{\longrightarrow}  \sigma_\eta G \quad  \text{ in } TL^2.$$
We note that the statement $ G_{\veps} \overset{\Gamma}{\longrightarrow}  \sigma_\eta G$ is a purely analytic, purely deterministic fact. Proposition \ref{thm:InifinityTransportEstimate}, on the other hand contains all the probabilistic estimates needed to establish all the results on this paper. Such estimates in particular provide the constraints on the parameter $\veps_n$ in Theorem \ref{DiscreteGamma}. It is worth observing that Proposition \ref{thm:InifinityTransportEstimate} is a statement that only involves the underlying measure $\nu$ and the empirical measure $\nu_n$, and that in particular it does not involve estimates on the difference between the functional $G_{\veps_n}(u)$ and the functional $G_{n, \veps_n}(u)$ for $u$ belonging to a small (in the sense of $VC$-dimension) class of functions. In other words our estimates are related to the domains where the functions are defined (discrete/continuous) and not to the actual values of functions defined on those domains.

\medskip

With Theorem \ref{DiscreteGamma} at hand, the proof of Theorem \ref{ConvergenceSpectrumTheorem} now relies on some spectral properties of the operator $\mathcal{L}$ and analogous properties of $\mathcal{L}_{n , \veps_n}$. As shown in Section \ref{SectionSpectrumL}, the space $L^2(D,\rho)$ has a countable orthonormal basis (with respect to the inner product $\langle\cdot, \cdot  \rangle_{\rho}$) formed with eigenfunctions of $\mathcal{L}$. Additionally, the different eigenvalues of $\mathcal{L}$ can be organized as an increasing sequence of positive numbers converging to infinity. Each of the eigenvalues has finite multiplicity. Moreover, the eigenvalues of $\mathcal{L}$ have a variational characterization, as they can be written as the minimum value of optimization problems over successive subspaces of $L^2(D,\rho)$. This is the content of the Courant-Fisher mini-max principle which states that for every $k$
\begin{equation}
\lambda_{k} = \sup_{ S \in \Sigma_{k-1}} \min_{\|u\|_{\rho}=1 \:, \: u \in S^\perp}  G(u),
\label{MinimaxUnnormalizedCont}
\end{equation}
where we recall $0 = \lambda_1 \leq \lambda_2 \leq \dots,$ denote the eigenvalues of $\mathcal{L}$ repeated according to multiplicity, $\Sigma_{k-1}$ denotes the set of $(k-1)$-dimensional subspaces of $L^2(D,\rho)$, and where $S^\perp$ represents the orthogonal complement of $S$ with respect to the inner product $\langle \cdot, \cdot \rangle_{\rho}$. Moreover, the supremum in \eqref{MinimaxUnnormalizedCont} is attained by the span of the first $(k-1)$ eigenfunctions of $\mathcal{L}$. In Subsection \ref{SectionSpectrumL} we review the previously mentioned spectral properties of $\mathcal{L}$. Likewise, we can write the eigenvalues of $\mathcal{L}_{n, \veps_n}$ as
\begin{equation}
\lambda_{k}^{(n)} = \frac{n\veps_n^2}{2} \sup_{ S \in \Sigma^{(n)}_{k-1}} \min_{\|u\|_{\nu_n}=1 \:, \: u \in S^\perp} G_{n, \veps_n}(u),
\label{MinimaxUnnormalizedDiscrete}
\end{equation}
where $\Sigma_{k-1}^{(n)}$ denotes the set of $(k-1)$-dimensional subspaces of $\R^n$, and where $S^\perp$ represents the orthogonal complement of $S$ with respect to the inner product $\langle\cdot, \cdot \rangle_{\nu_n}$ in $\R^n$. Moreover, as in the continuum setting, the supremum in \eqref{MinimaxUnnormalizedDiscrete} is attained by the span of the first $(k-1)$ eigenvectors of $\mathcal{L}_{n, \veps_n}$. Theorem \ref{DiscreteGamma} allows us to exploit expressions \eqref{MinimaxUnnormalizedDiscrete} and \eqref{MinimaxUnnormalizedCont} and in fact in Section \ref{ProofsMainTheorems} we show how \ref{MinimaxUnnormalizedDiscrete} and \ref{MinimaxUnnormalizedCont} together with Theorem \ref{DiscreteGamma} imply Theorem \ref{ConvergenceSpectrumTheorem}, thus establishing the spectral convergence in the unnormalized case.

In the normalized case, the same approach used in the unnormalized case can be taken. In fact, the proof of Theorem \ref{ConvergenceSpectrumTheoremNormalized} follows from the proof of Theorem \ref{ConvergenceSpectrumTheorem} by \emph{mutatis mutandis} after Theorem \ref{DiscreteGammaNormalized} has been proved.

The paper is organized as follows. Section \ref{Notation} contains the notation and the background we need in the rest of the paper. In particular in Subsection \ref{TT} we review some facts about the $TL^2$ space, in Subsection \ref{sec:gamma} we review the definition of $\Gamma$-convergence, in Subsection \ref{sec:kmeans} we present some results on stability of $k$-means clustering, and in Subsection \ref{SectionSpectrumL} some facts about the spectrum of the operators $\mathcal{L}$, $\mathcal{N}^{sym}$ and $\mathcal{N}^{rw}$. In Section \ref{ProofsMainTheorems} we prove Theorem \ref{DiscreteGamma} and Theorem \ref{ConvergenceSpectrumTheorem}. Finally, in Section \ref{SectionGammaNormalized} we prove Theorem \ref{DiscreteGammaNormalized}, Theorem \ref{ConvergenceSpectrumTheoremNormalized} and Corollary \ref{ConvergenceSpectrumTheoremNormalizedRW}. 
 
\medskip

\section{Preliminaries}
\label{Notation}
\subsection{Transportation theory and the $TL^2$ space.} \label{TT}
Let $D$ be an open domain in $\R^d$.  We denote by $\mathfrak{B}(D)$ the Borel $\sigma$-algebra of $D$, by $\mathcal{P}(D)$ the set of all Borel probability measures on $D$ and by $\mathcal{P}_2(D)$ the Borel probability measures on $D$ with finite second moments. The Wasserstein distance between $\mu, \tilde{\mu} \in \mathcal{P}_2(D)$ (denoted by $d_2(\mu, \tilde{\mu})$)  is defined by:
\begin{equation} \label{ot plan}
d_2(\mu, \tilde{\mu}):= \min\left\{  \left( \int_{D \times D}|x-y|^2 d \pi (x,y) \right)^{1/2}\: : \: \pi \in \Gamma(\mu, \tilde{\mu})  \right\},
\end{equation}
where $\Gamma(\mu, \tilde{\mu})$ is the set of all {\em couplings} between $\mu$ and $\tilde{\mu}$, that is, the set of all  Borel probability measures on $D \times D$ for which the marginal on the first variable is $\mu$ and the marginal on the second variable is $\tilde{\mu}$.  The elements $\pi \in \Gamma(\mu, \tilde{\mu})$ are also referred as \emph{transportation plans} between $\mu$ and $\tilde{\mu}$. The existence of minimizers, which justifies the definition above, is straightforward to show, see \cite{villani2003topics}. It is known that  the convergence in Wasserstein metric is equivalent to weak convergence of probability measures and uniform integrability of second moments.

In the remainder, unless otherwise stated, we assume that $D$ is a bounded set. In that setting, we have $\mathcal{P}(D)= \mathcal{P}_2(D)$ and  uniform integrability of second moments is immediate. In particular, convergence in  the Wasserstein metric is equivalent to weak convergence of measures.  For details see for instance \cite{villani2003topics}, \cite{ambrosio2008gradient} and the references within. In particular, $\mu_n \rightharpoonup \mu $ (to be read $\mu_n $ converges weakly to $\mu$) if and only if there is a sequence of transportation plans between $\mu_n$ and $\mu$, $\left\{ \pi_n \right\}_{n \in \N}$, for which:
%We also consider,
%\begin{equation} \label{inf ot}
%d_\infty(\mu, \tilde{\mu}):= \inf \left\{  \esssup_\pi \{ |x-y| \::\: (x,y) \in D \times D \}  \: : \: \pi \in \Gamma(\mu, \tilde{\mu})  \right\},
%\end{equation}
%which defines a metric on $\mathcal P(D)$. We refer to this metric as the $\infty$-transportation distance.
% paragraph{}
%One could consider the previous notions in more generality, for example by working on a separable metric space with the Radon property, that is, spaces where every Borel probability measure has the inner regularity property.
\begin{equation}
 \lim_{n \rightarrow \infty} \iint_{D \times D} | x- y |^2 d \pi_n(x,y) =0. 
\label{convergentPlans}
\end{equation}
Actually, note that if $D$ is bounded, \eqref{convergentPlans} is equivalent to $ \lim_{n \rightarrow \infty} \iint_{D \times D} | x- y | d \pi_n(x,y) =0$. 
We say that a sequence of transportation plans, $\left\{ \pi_n \right\}_{n \in \N}$  (with $\pi_{n} \in \Gamma(\mu, \mu_n)$),  is \emph{stagnating} if it satisfies \eqref{convergentPlans}.
%We remark that, since $D$ is bounded, it is straightforward to show that a sequence of transportation plans is stagnating if and only if 
%$\pi_n$ converges weakly in the space of probability measures on $D \times D$ to $\pi = (id \times id)_\sharp \mu$. 
Given a Borel map $T: D \rightarrow D$ and $\mu \in \mathcal{P}(D)$, the \emph{push-forward} of $\mu$ by $T$, denoted by $T_{\sharp} \mu \in \mathcal{P}(D)$ is given by:
\begin{equation*}
T_{\sharp} \mu(A):= \mu\left( T^{-1}(A) \right), \: A \in \mathfrak{B}(D).
\end{equation*}
For any bounded Borel function $\varphi: D \rightarrow \R$ the following change of variables in the integral holds:
\begin{equation} \label{chofvar}
 \int_D \varphi(x) \: d (T_\sharp \mu)(x) = \int_D \varphi(T(x)) \, d \mu(x).
\end{equation}

We say that a Borel map $T : D \rightarrow D$ is a \emph{transportation map} between the measures $\mu\in \mathcal{P}(D)$ and $\tilde{\mu} \in \mathcal{P}(D)$ if $\tilde{\mu} = T_\sharp \mu$. In this case, we associate a transportation plan $\pi_T \in \Gamma(\mu , \tilde{\mu})$ to $T$ by:
\begin{equation}
\pi_T:= (\id \times T)_{\sharp}\mu,
\label{TransportationMapPlan1}
\end{equation}
where $(\id \times T): D \rightarrow D \times D$  is given by $(\id \times T)(x) = \left(x, T(x) \right)$. 
%Then for any $c \in L^1(D\times D , \mathfrak{B}\left(D \times D \right),\pi_T)$
%\begin{equation}
%\int_{D \times D} c(x,y) d \pi_T(x,y) = \int_{D}c\left(x, T(x)\right) d \mu(x).
%\label{TransportationMapPlan}
%\end{equation}

It is well known that when the measure $\mu \in \mathcal{P}_2(D)$ is absolutely continuous with respect to the Lebesgue measure, the problem on the right hand side of \eqref{ot plan} is equivalent to:
\begin{equation}
 \min \left\{  \left( \int_{D }|x-T(x)|^2 d \mu(x) \right)^{1/2}\: : \:  T_{\sharp}\mu= \tilde{\mu}  \right\}.
 \label{ot map}
\end{equation}
In fact, the problem \eqref{ot plan} has a unique solution which is induced (via \eqref{TransportationMapPlan1}) by a transportation map $T$ solving \eqref{ot map} (see \cite{villani2003topics}).
In particular, boundedness of $D$ implies that when $\mu$ has a density, then $\mu_n \rightharpoonup{\mu}$ as $n \rightarrow \infty$ is equivalent to the existence of a sequence $\left\{ T_n\right\}_{n \in \N}$ of transportation maps, (${T_n}_{\sharp} \mu= \mu_n$)  such that:
\begin{equation}
\int_{D} | x- T_n(x) |^2 d \mu(x) \rightarrow 0, \: \te{ as } \: n \rightarrow \infty.
\label{convergentMaps}
\end{equation}
We say that a sequence of transportation maps $\left\{ T_n \right\}_{n \in \N}$ is \emph{stagnating} if it satisfies  \eqref{convergentMaps}.
\medskip

We now introduce the space of objects that allows to simultaneously consider the discrete and continuum setting. Let
\[ TL^2(D) := \{ (\mu, f) \; : \:  \mu \in \mathcal P_2(D), \, f \in L^2(\mu) \}, \]
where $L^2(\mu)$ denotes the space of $L^2$ functions with respect to measure $\mu$. 
%If $\mu$ is absolutely continuous with respect to the Lebesgue measure, that is if $d \mu = \rho(x) dx$, we write $L^2(\rho)$ to represent $L^2(\mu)$ and if $\mu$ is the Lebesgue measure restricted to $D$ we simply write $L^2(D)$.  
For $(\mu,f)$, $(\nu,g)$ in $TL^2$ define
  \begin{equation} \label{tlpmetric}
d_{TL^2}((\mu,f), (\nu,g)) =
   \inf_{\pi \in \Gamma(\mu, \nu)} \left(\iint_{D \times D} |x-y|^2  + |f(x)-g(y)|^2  d\pi(x,y) \right)^{\frac{1}{2}}.
\end{equation}
The set $TL^2$ and $d_{TL^2}$ were introduced in \cite{GTS}, where it was also proved that $d_{TL^2}$ is a metric. Note that if we delete the second term on the right hand side of \eqref{tlpmetric} we recover the Wasserstein distance between the measures $\mu$ and $\nu$. The idea of introducing the second term on the right hand side of \eqref{tlpmetric} is to make it possible to compare functions in spaces as different as point clouds and continuous domains. We have the following characterization of convergence in $TL^2$. See \cite{GTS}[Propositions 3.3 and 3.12] for its proof.
\begin{proposition} 
Let $(\mu,f) \in TL^2$ and let
 $\left\{ \left(\mu_n , f_n \right) \right\}_{n \in \N}$ be a sequence in $TL^2$. The following statements are equivalent:
\begin{list1}
\item  $ \left(\mu_n , f_n \right)  \overset{{TL^2}}{\longrightarrow} (\mu, f)$ as $n \rightarrow \infty$.
\item The graphs of functions considered as measures converge in  the Wasserstein sense \eqref{ot plan}, that is
\[ (I \times f_n)_\sharp \mu_n \overset{d_2}{\longrightarrow} (I \times f)_\sharp \mu  \quad \te{ as } n \to \infty. \]
\item $\mu_n \rightharpoonup \mu$ and for every stagnating sequence of transportation plans $\left\{\pi_n \right\}_{n \in \N}$
(with $\pi_n \in \Gamma(\mu, \mu_n)$) 
\begin{equation}
\iint_{D \times D} \left| f(x) - f_n(y) \right|^2 d\pi_n(x,y) \rightarrow 0, \:  as \: n \rightarrow \infty.
\label{convergentTLp}
\end{equation} 
\item $\mu_n \rightharpoonup \mu$ and there exists a stagnating sequence of transportation plans $\left\{\pi_n \right\}_{n \in \N}$ (with $\pi_n \in \Gamma(\mu, \mu_n)$) for which \eqref{convergentTLp} holds.
% paragraph{}
\end{list1}
Moreover, if the measure $\mu$ is absolutely continuous with respect to the Lebesgue measure, the following are  equivalent to the previous statements:
\begin{list1}
\addtocounter{broj1}{3}
\item  $\mu_n \rightharpoonup \mu$ and there exists a stagnating sequence of transportation maps $\left\{ T_n \right\}_{n \in \N}$ (with ${T_n}_\sharp \mu = \mu_n$) such that:
\begin{equation}
\int_{D} \left| f(x) - f_n\left(T_n(x)\right) \right|^2 d\mu(x) \rightarrow 0, \:  as \: n \rightarrow \infty.
\label{convergentTLpMap}
\end{equation}
\item $\mu_n \rightharpoonup \mu$ and for any stagnating sequence of transportation maps $\left\{ T_n \right\}_{n \in \N}$  (with ${T_n}_\sharp \mu = \mu_n$)  \eqref{convergentTLpMap} holds.
\end{list1}
\label{EquivalenceTLp}
\end{proposition}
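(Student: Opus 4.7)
My plan is to prove the equivalences via the chain (1) $\Leftrightarrow$ (2), (4) $\Leftrightarrow$ (1) $\Rightarrow$ (3) $\Rightarrow$ (4), and then under the absolute-continuity assumption, (4) $\Leftrightarrow$ (5) and (3) $\Leftrightarrow$ (6). The starting observation is that $d_{TL^2}$ is really a Wasserstein distance in disguise: any $\pi \in \Gamma(\mu,\mu_n)$ lifts to $\Pi := ((\id,f) \times (\id,f_n))_\sharp \pi \in \Gamma((\id,f)_\sharp \mu,\, (\id,f_n)_\sharp \mu_n)$ with matching transport cost $|x-y|^2 + |f(x)-f_n(y)|^2$, and conversely every plan between the graph measures is supported on the product of graphs (since each marginal concentrates on a graph) and therefore arises this way. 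This identification immediately yields (1) $\Leftrightarrow$ (2). The implication (4) $\Rightarrow$ (1) is immediate from the definition of the infimum, and (1) $\Rightarrow$ (4) follows by choosing a sequence of near-minimizing plans $\pi_n^\ast \in \Gamma(\mu,\mu_n)$ for $d_{TL^2}$, whose first-marginal cost $\int |x-y|^2 d\pi_n^\ast \to 0$ guarantees stagnation. Finally, (3) $\Rightarrow$ (4) is trivial since a stagnating sequence always exists when $\mu_n \rightharpoonup \mu$ (take $d_2$-optimal plans between $\mu$ and $\mu_n$).

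The main work is (1) $\Rightarrow$ (3), which must upgrade the integral control from the single plan $\pi_n^\ast$ (provided by the already-established (4)) to an \emph{arbitrary} stagnating sequence $\pi_n$. I would argue by approximation. Fix $\varepsilon>0$ and, by density of $C(\overline D)$ in $L^2(\mu)$, pick a continuous $g$ with $\|f-g\|_{L^2(\mu)}<\varepsilon$. For any stagnating $\pi_n$, decompose
\[
|f(x)-f_n(y)|^2 \;\leq\; 3|f(x)-g(x)|^2 + 3|g(x)-g(y)|^2 + 3|g(y)-f_n(y)|^2
\]
and integrate against $\pi_n$. The first term contributes $3\|f-g\|_{L^2(\mu)}^2 \le 3\varepsilon^2$ by the marginal condition. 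The second vanishes as $n\to\infty$: by uniform continuity of $g$ on $\overline D$ combined with $\int|x-y|^2 d\pi_n \to 0$, split the integration domain at $\{|x-y| \leq \delta\}$ and apply Chebyshev. For the third term, which equals $3\|g-f_n\|_{L^2(\mu_n)}^2$, I invoke a bridging step using the \emph{different} plan $\pi_n^\ast$: the same triangle-inequality expansion
\[
\|g-f_n\|_{L^2(\mu_n)}^2 \leq 3\!\!\iint\! |g(y)-g(x)|^2 d\pi_n^\ast + 3\|g-f\|_{L^2(\mu)}^2 + 3\!\!\iint\! |f(x)-f_n(y)|^2 d\pi_n^\ast
\]
has each term either $O(\varepsilon^2)$ or vanishing as $n\to\infty$. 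Combining and sending $\varepsilon\to 0$ closes (3).

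Under the absolute-continuity assumption the remaining equivalences are routine. Any transport map $T$ induces a plan $(\id,T)_\sharp \mu$ with identical cost, so (5) $\Rightarrow$ (4) and (3) $\Rightarrow$ (6) are immediate. For the converses, since $\mu$ is absolutely continuous, Brenier's theorem (cf.~\cite{villani2003topics}) provides $d_2$-optimal transport maps $T_n^\ast$ from $\mu$ to $\mu_n$ satisfying $\int|x-T_n^\ast(x)|^2 d\mu = d_2(\mu,\mu_n)^2 \to 0$; these form a concrete stagnating sequence of maps to which (3) applies, yielding (6) and hence (5) by specialization. The principal obstacle throughout is the bridging step in (1) $\Rightarrow$ (3): one cannot directly compare values of $f_n$ under two different stagnating plans $\pi_n$ and $\pi_n^\ast$ without the continuous intermediate $g$, whose uniform continuity is what transfers the integral control from one plan to the other.
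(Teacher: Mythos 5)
Your proof is correct and follows essentially the same route as the source of this result (\cite{GTS}, Propositions 3.3 and 3.12): the identification of $d_{TL^2}$ with the Wasserstein distance between graph measures handles (1)$\Leftrightarrow$(2)$\Leftrightarrow$(4), the passage to an arbitrary stagnating sequence in (4)$\Rightarrow$(3) is done there too by approximating $f$ with a (uniformly) continuous function on the bounded domain and using the triangle inequality to transfer control between plans, and the map versions (5), (6) follow from the existence of optimal transport maps when $\mu$ is absolutely continuous.
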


\begin{remark}
One can think of the convergence in $TL^2$ as a generalization of weak convergence of measures and convergence in $L^2$ of functions. By this we mean that $\left\{ \mu_n \right\}_{n \in \N}$ in $\mathcal{P}(D)$ converges weakly (and in the Wasserstein sense) to $\mu\in \mathcal{P}(D)$ if and only if $ \left(\mu_n , 1 \right) \overset{{TL^2}}{\longrightarrow} (\mu, 1)$ as $n \rightarrow \infty$,  and that for $\mu \in \mathcal{P}(D)$ a sequence $\left\{ f_n \right\}_{n\in \N}$ in $L^2(\mu)$ converges in $L^2(\mu)$ to $f$ if and only if $(\mu, f_n) \overset{{TL^2}}{\longrightarrow} (\mu,f)$ as $n \rightarrow \infty$. The last fact is established
in Proposition \ref{EquivalenceTLp}.
\end{remark}

\begin{definition}
Suppose $\left\{ \mu_n  \right\}_{n \in \N} $ in $\mathcal{P}(D)$ converges weakly to $\mu \in \mathcal{P}(D)$. We say that the sequence $\left\{ u_n\right\}_{n \in \N}$ (with $u_n \in L^2(\mu_n)$) converges in the $TL^2$-sense to $u \in L^2(\mu)$, if $\left\{  \left(\mu_n , u_n  \right) \right\}_{n \in \N}$ converges to $(\mu, u)$ in the $TL^2$-metric. In this case we use a slight abuse of notation and write $u_n \overset{{TL^2}}{\longrightarrow} u$ as $n \rightarrow \infty$. Also, we say the sequence $\left\{ u_n\right\}_{n \in \N}$ (with $u_n \in L^2(\mu_n)$) is precompact in $TL^2$ if the sequence $\left\{   \left(\mu_n , u_n  \right)\right\}_{n \in \N}$ is precompact in $TL^2$.
\label{DefintionTL2Functions}
\end{definition}

\begin{remark}
Thanks to Proposition \ref{EquivalenceTLp} when $\mu$ is absolutely continuous with respect to the Lebesgue measure, $u_n \overset{{TL^2}}{\longrightarrow} u$ as $n \rightarrow \infty$ if and only if for every (or one) $\left\{ T_n\right\}_{n \in \N}$ stagnating  sequence  of transportation maps (with $T_{n \sharp} \mu= \mu_n$) it is true that $u_n \circ T_n \overset{{L^2(\mu)}}{\longrightarrow} u $ as $n \rightarrow \infty$. Also $\left\{ u_n  \right\}_{n \in \N} $ is precompact in $TL^2$ if and only if  for every (or one) $\left\{ T_n\right\}_{n \in \N}$ stagnating sequence  of transportation maps (with $T_{n \sharp} \mu= \mu_n$) it is true that  $\left\{ u_n \circ T_n \right\}_{n \in \N} $ is pre-compact in $L^2(\mu)$.
\label{RemarkDefinitionTL2Functions}
\end{remark}

\begin{lemma} \label{lem:tl_char}
Let $\mu_n$ be a sequence of  Borel probability measures on $\R^N$ with finite second moments, converging to a probability measure $\mu$ in the Wasserstein sense. 
Let $A_n$ be $\mu_n$ measurable, and $A$ be $\mu$ measurable. 
Let $\tilde{\mu}_n = \mu_{n \llcorner A_n}$ and $\tilde{\mu}= \mu_{\llcorner A}  $. 
Then,
\begin{equation}
(\mu_n, \chi_{A_n}) \overset{{TL^2}}{\longrightarrow} (\mu, \chi_A) \quad \te{ if and only if } \quad
\tilde{\mu}_n \rightharpoonup \tilde{\mu}
\end{equation}
as $n \to \infty$.
\end{lemma}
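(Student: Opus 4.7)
The plan is to apply characterization (2) of Proposition \ref{EquivalenceTLp}, which identifies $TL^2$-convergence $(\mu_n, \chi_{A_n}) \to (\mu, \chi_A)$ with $d_2$-convergence of the graph probability measures $(\id \times \chi_{A_n})_\sharp \mu_n \to (\id \times \chi_A)_\sharp \mu$ on $\R^N \times \R$. The crucial observation is that, because $\chi_{A_n}$ takes values only in $\{0,1\}$, each graph measure is supported on the disjoint union $\R^N \times \{0,1\}$ and admits the explicit decomposition
\[ (\id \times \chi_{A_n})_\sharp \mu_n = \tilde\mu_n \otimes \delta_1 + \mu_{n \llcorner A_n^c} \otimes \delta_0, \]
with the analogous decomposition $(\id \times \chi_A)_\sharp \mu = \tilde\mu \otimes \delta_1 + \mu_{\llcorner A^c} \otimes \delta_0$ for the limit.

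The second step is to pass from $d_2$-convergence of the graph measures to weak convergence on each sheet of $\R^N \times \{0,1\}$. Since the second coordinate of each graph measure lies in the bounded set $\{0,1\}$, its $d_2$-convergence is equivalent to weak convergence together with convergence of the second moments of the first marginals; those marginals are just $\mu_n$, and convergence of their second moments is part of the standing hypothesis $d_2(\mu_n,\mu)\to 0$. Hence $TL^2$-convergence is equivalent to weak convergence of the graph measures. Testing the latter against continuous bounded functions of the form $(x,y)\mapsto f(x)\phi(y)$, where $\phi$ is continuous bounded with prescribed values $\phi(0),\phi(1)\in\{0,1\}$, separates the two sheets and yields the equivalence with the joint pair
\[ \tilde\mu_n \rightharpoonup \tilde\mu \quad \text{and} \quad \mu_{n \llcorner A_n^c} \rightharpoonup \mu_{\llcorner A^c}. \]

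The \emph{only if} direction is then immediate, since the first of these two convergences is exactly the asserted conclusion. For the \emph{if} direction, the first convergence is the hypothesis of the lemma, and the second is obtained by subtracting it from the standing hypothesis $\mu_n \rightharpoonup \mu$, using the linearity of weak convergence when tested against bounded continuous functions; positivity of both $\mu_n-\tilde\mu_n=\mu_{n\llcorner A_n^c}$ and $\mu-\tilde\mu=\mu_{\llcorner A^c}$ ensures that this signed-measure convergence is equivalent to weak convergence of positive measures. The only mildly subtle point in the argument is tracking the equivalence of $d_2$-convergence and weak convergence on the noncompact product $\R^N \times \R$, but this is handled cleanly by the finite second moment hypothesis, as indicated above.
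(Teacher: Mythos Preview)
Your proof is correct and follows essentially the same approach as the paper: both invoke Proposition~\ref{EquivalenceTLp} to reduce $TL^2$-convergence to $d_2$-convergence of the graph measures $\tilde\mu_n \otimes \delta_1 + (\mu_n - \tilde\mu_n) \otimes \delta_0$, then pass between $d_2$-convergence and weak convergence using uniform integrability (equivalently, convergence) of second moments inherited from the hypothesis $d_2(\mu_n,\mu)\to 0$. The paper is slightly terser in separating the two sheets and in handling the converse direction, but the argument is the same.
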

\begin{proof}
From Proposition \ref{EquivalenceTLp} follows that $(\mu_n, \chi_{A_n}) \overset{{TL^2}}{\longrightarrow} (\mu, \chi_A)$ if and only if  $\tilde{\mu}_n \times \delta_{\{1\}} + (\mu_n - \tilde{\mu}_n) \times \delta_{ \{0\}} \overset{d_2}{\longrightarrow}
\tilde{\mu} \times \delta_{\{1\}} + (\mu - \tilde{\mu}) \times \delta_{\{0\}}$, as $n \rightarrow \infty$.

Since convergence in Wasserstein distance implies weak convergence, we deduce that $\tilde{\mu}_n \times \delta_{\{1\}} + (\mu_n - \tilde{\mu}_n) \times \delta_{ \{0\}} \rightharpoonup \tilde{\mu} \times \delta_{\{1\}} + (\mu - \tilde{\mu}) \times \delta_{\{0\}}$, and in particular we conclude that
$$ \tilde{\mu}_n \rightharpoonup \tilde{\mu}, \quad \text{as } n \rightarrow \infty. $$
 
Conversely, the weak convergence $\tilde{\mu}_n \rightharpoonup \tilde{\mu},$ together with the fact that $\mu_n \overset{d_2}{\longrightarrow} \mu$ (which in particular implies that $\mu_n \rightharpoonup \mu$), imply that
$$\tilde{\mu}_n \times \delta_{\{1\}} + (\mu_n - \tilde{\mu}_n) \times \delta_{ \{0\}} \rightharpoonup \tilde{\mu} \times \delta_{\{1\}} + (\mu - \tilde{\mu}) \times \delta_{\{0\}}.$$

In order to conclude that the above convergence also holds in the Wasserstein sense, we simply note that this follows from the  the uniform integrability of the second moments of $\left\{ \tilde{\mu}_n \right\}_{n\in \N}$, which in turn follows from

$$\lim_{t \rightarrow \infty} \sup_{n \in \N} \int_{ \left\{ |x| > t \right\}} |x|^2 d \tilde{\mu}_n(x) \leq  \lim_{t \rightarrow \infty} \sup_{n \in \N} \int_{\left\{ |x| > t \right\}} |x|^2 d \mu_n(x)  =0.$$
The equality in the previous expression follows from the fact that $\mu_n \overset{d_2}{\longrightarrow} \mu$.
\end{proof}

%\red
%Note that the statement above did not change... I guess  one could  formulate $TL^2$ convergence of functions in terms of weak convergence of  level sets...
%\nc

The following proposition states that inner products are continuous with respect to the $TL^2$-convergence.

\begin{proposition}
Suppose that $(\mu_n,u_n) \overset{{TL^2}}{\longrightarrow} (\mu,u) $ and $( \mu_n, v_n ) \overset{{TL^2}}{\longrightarrow} (\mu,v)$ as $n \rightarrow \infty$. Then, 

\begin{equation}
 \lim_{n \rightarrow \infty} \langle u_n , v_n \rangle_{\mu_n} = \langle u,v \rangle_{\mu}.
\end{equation}
\label{ContinuityInnerProdTL2}
\end{proposition}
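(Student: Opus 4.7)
The plan is to pick a common stagnating sequence of transportation plans and to reduce the inner-product difference to two $L^2(\pi_n)$ cross terms that can be handled by Cauchy--Schwarz.

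First I would invoke part (3) of Proposition \ref{EquivalenceTLp}. Since $\mu_n \rightharpoonup \mu$, any stagnating sequence of transportation plans $\{\pi_n\}_{n\in\N}$ with $\pi_n \in \Gamma(\mu,\mu_n)$ automatically satisfies both
\[
\iint_{D\times D} |u(x)-u_n(y)|^2\,d\pi_n(x,y) \longrightarrow 0
\quad\text{and}\quad
\iint_{D\times D} |v(x)-v_n(y)|^2\,d\pi_n(x,y) \longrightarrow 0,
\]
because $TL^2$-convergence is characterised by this property holding for \emph{every} stagnating plan. Fix such a sequence $\pi_n$.

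Second, I would rewrite both inner products as integrals against $\pi_n$ using that the marginals of $\pi_n$ are $\mu$ on the first factor and $\mu_n$ on the second:
\[
\langle u,v\rangle_\mu = \iint u(x)v(x)\,d\pi_n(x,y), \qquad
\langle u_n,v_n\rangle_{\mu_n} = \iint u_n(y)v_n(y)\,d\pi_n(x,y).
\]
Subtracting and using the algebraic identity
\[
u(x)v(x)-u_n(y)v_n(y) = u(x)\bigl(v(x)-v_n(y)\bigr) + v_n(y)\bigl(u(x)-u_n(y)\bigr),
\]
Cauchy--Schwarz in $L^2(\pi_n)$ yields
\[
\bigl|\langle u,v\rangle_\mu - \langle u_n,v_n\rangle_{\mu_n}\bigr|
\leq \|u\|_\mu \Bigl(\iint |v(x)-v_n(y)|^2 d\pi_n\Bigr)^{1/2}
+ \|v_n\|_{\mu_n}\Bigl(\iint |u(x)-u_n(y)|^2 d\pi_n\Bigr)^{1/2}.
\]
The first factor $\|u\|_\mu$ is finite because $u\in L^2(\mu)$, and the two square-root factors tend to $0$ by the first step.

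The only remaining point is to bound $\|v_n\|_{\mu_n}$ uniformly in $n$, which I expect to be the only mildly delicate step. This follows from the triangle inequality in $L^2(\pi_n)$:
\[
\|v_n\|_{\mu_n} = \bigl\|v_n(y)\bigr\|_{L^2(\pi_n)}
\leq \bigl\|v(x)\bigr\|_{L^2(\pi_n)} + \bigl\|v(x)-v_n(y)\bigr\|_{L^2(\pi_n)}
= \|v\|_\mu + o(1),
\]
so $\sup_n \|v_n\|_{\mu_n} < \infty$. Combining these estimates gives $\langle u_n,v_n\rangle_{\mu_n} \to \langle u,v\rangle_\mu$, completing the proof.
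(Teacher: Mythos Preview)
Your proof is correct. It differs from the paper's argument in its decomposition: the paper invokes the polarization identity to reduce to the special case $u_n=v_n$, $u=v$, i.e.\ to showing $\|u_n\|_{\mu_n}\to\|u\|_\mu$, which then follows in one line from the reverse triangle inequality in $L^2(\pi_n)$. You instead attack the inner-product difference directly via the splitting $u(x)v(x)-u_n(y)v_n(y)=u(x)(v(x)-v_n(y))+v_n(y)(u(x)-u_n(y))$ and Cauchy--Schwarz. Both routes rest on the same idea of writing everything against a common stagnating plan $\pi_n$, and your use of Proposition~\ref{EquivalenceTLp}(3) to justify the common plan is exactly right. The paper's route is marginally shorter because polarization avoids the separate step of bounding $\sup_n\|v_n\|_{\mu_n}$; amusingly, the triangle-inequality estimate you use for that bound is precisely the paper's entire proof applied to $v_n$.
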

\begin{proof}
By the polarization identity, it is enough to prove that if $( \mu_n,u_n ) \overset{{TL^2}}{\longrightarrow} (\mu,u) $ then,
\begin{equation}
 \lim_{n \rightarrow \infty}  \| u_n \|_{\mu_n} = \| u \|_{\mu}.
\end{equation}
For this purpose, consider a stagnating sequence of transportation plans $ \left\{ \pi_n \right\}_{n \in \N}$ with $\pi_n \in \Gamma(\mu, \mu_n)$.  We can write $\| u_n \|_{\mu_n} = \left(\int_{D\times D}|u_n(y) |^2 d\pi_n(x,y) \right)^{1/2}$ and $\|u\|_{\mu} = \left(\int_{D\times D}|u(x) |^2 d\pi_n(x,y) \right)^{1/2}$. Hence,
\begin{equation}
\left| \: \: \|u_n\|_{\mu_n}  - \|u\|_{\mu} \right| \leq \left(\int_{D \times D}|u_n(y)-u(x)|^2 d \pi_n(x,y) \right)^{1/2} \rightarrow 0 \:, \text{ as } \: n \rightarrow \infty.
\end{equation}
\end{proof}

In proving the convergence of $k$-means clustering (statement 4. in Theorems \ref{ConvergenceSpectrumTheorem} and \ref{ConvergenceSpectrumTheoremNormalized})) we also need the following result on $TL^2$ convergence of a composition of functions.

\begin{lemma}[Continuity of composition in $TL^2$] \label{TL2comp}
Let $\left\{ \mu_n \right\}_{n\in \N}$ and $\mu$ be a collection of Borel probability measures on $\R^d$ with finite second moments. Let  $F_n \in L^2(\mu_n, \R^d: \R^k)$  for all $n \in \N$ and  $F \in L^2(\mu,\R^d:\R^k)$. Consider the measures $\tilde{\mu}_n:= {F_{n }}_{\sharp} \mu_n$ for all $n \in \N$ and $\tilde{\mu}:={F}_{\sharp}\mu$. Finally, let $\tilde{f}_n \in L^2(\tilde \mu_n,\R^k: \R)$ for all $n \in \N$ and $\tilde{f} \in L^2(\tilde \mu,\R^k: \R)$. If 
$$ (\mu_n, F_n) \overset{{TL^2}}{\longrightarrow} (\mu, F)  \quad \text{as } n \rightarrow \infty, $$
and
$$  (\tilde{\mu}_n , \tilde{f}_n ) \overset{{TL^2}}{\longrightarrow} (\tilde{\mu}, \tilde{f}) \quad \text{as } n \rightarrow \infty. $$
Then,
$$ (\mu_n,  \tilde{f}_n\circ F_n   ) \overset{{TL^2}}{\longrightarrow} (\mu, \tilde{f}\circ F_n) \quad \text{as } n \rightarrow \infty. $$
\end{lemma}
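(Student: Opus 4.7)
The plan is to use the characterization of $TL^2$ convergence via stagnating transportation plans (Proposition \ref{EquivalenceTLp}, items 3 and 4) and simply transport those plans by the maps $F$ and $F_n$ from $\R^d$ to $\R^k$. The statement $(\mu, \tilde f \circ F_n)$ in the conclusion should be read as $(\mu, \tilde f \circ F)$; this is the natural target.

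First, I would invoke the $TL^2$ characterization to select a stagnating sequence $\pi_n \in \Gamma(\mu,\mu_n)$ (in particular $\iint |x-y|^2 d\pi_n(x,y) \to 0$) along which
\[
\iint_{\R^d\times\R^d} |F(x) - F_n(y)|^2 \, d\pi_n(x,y) \longrightarrow 0.
\]
Next, I would push these plans forward under the map $(F,F_n)\colon \R^d\times\R^d \to \R^k\times\R^k$, setting $\tilde\pi_n := (F\times F_n)_\sharp \pi_n$. A one-line check using the change of variables \eqref{chofvar} shows that the marginals of $\tilde\pi_n$ are $\tilde\mu = F_\sharp\mu$ and $\tilde\mu_n = (F_n)_\sharp \mu_n$, so $\tilde\pi_n \in \Gamma(\tilde\mu,\tilde\mu_n)$. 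Moreover,
\[
\iint_{\R^k\times\R^k} |u-v|^2 \, d\tilde\pi_n(u,v) \;=\; \iint_{\R^d\times\R^d} |F(x) - F_n(y)|^2 \, d\pi_n(x,y) \;\longrightarrow\; 0,
\]
so in particular $\tilde\mu_n \rightharpoonup \tilde\mu$ (by \eqref{convergentPlans}) and $\{\tilde\pi_n\}$ is a stagnating sequence of transportation plans between $\tilde\mu$ and $\tilde\mu_n$.

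Now I would apply the hypothesis $(\tilde\mu_n,\tilde f_n) \overset{TL^2}{\longrightarrow} (\tilde\mu,\tilde f)$ using item 3 of Proposition \ref{EquivalenceTLp}: this tells us that \emph{for every} stagnating sequence of plans, and in particular for $\{\tilde\pi_n\}$,
\[
\iint_{\R^k \times \R^k} |\tilde f(u) - \tilde f_n(v)|^2 \, d\tilde\pi_n(u,v) \longrightarrow 0.
\]
Changing variables back through $(F,F_n)$, this identity reads
\[
\iint_{\R^d\times\R^d} |\tilde f(F(x)) - \tilde f_n(F_n(y))|^2 \, d\pi_n(x,y) \longrightarrow 0.
\]
Since the $\pi_n$ are already a stagnating sequence in $\Gamma(\mu,\mu_n)$, item 4 of Proposition \ref{EquivalenceTLp} immediately yields $(\mu_n,\tilde f_n\circ F_n) \overset{TL^2}{\longrightarrow} (\mu,\tilde f\circ F)$, which is the desired conclusion.

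The only point requiring any care is verifying that $\tilde\pi_n$ is indeed stagnating in the ambient space $\R^k$ (not just a coupling with the right marginals), but this follows at once from the change-of-variables computation above; no regularity on $F$, $F_n$, $\tilde f$, $\tilde f_n$ beyond measurability and the given $L^2$ integrability is needed, which is why the argument works uniformly on discrete and continuous domains.
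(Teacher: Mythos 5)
Your proof is correct and follows essentially the same route as the paper: select a stagnating plan $\pi_n \in \Gamma(\mu,\mu_n)$ witnessing $(\mu_n,F_n)\to(\mu,F)$, push it forward by $F\times F_n$ to obtain a stagnating plan $\tilde\pi_n \in \Gamma(\tilde\mu,\tilde\mu_n)$, apply the hypothesis on $(\tilde\mu_n,\tilde f_n)$ along $\tilde\pi_n$, and pull the resulting estimate back through the change of variables to conclude via Proposition \ref{EquivalenceTLp}. You also correctly identify the typo in the statement ($\tilde f\circ F_n$ should read $\tilde f\circ F$), which the paper's own proof implicitly corrects.
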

\begin{proof}
First of all note that the fact that $F_n \in L^2(\mu_n, \R^d: \R^k) $ and $F \in L^2(\mu,\R^d:\R^k)$ guarantees that $\tilde{\mu}_n$ and $\tilde{\mu}$ are probability measures on $\R^k$ with finite second moments. On the other hand, $(\mu_n, F_n) \overset{{TL^2}}{\longrightarrow} (\mu, F)$ as $n \rightarrow \infty$ implies the existence of a stagnating sequence of transportation maps $\left\{  \pi_n\right\}_{n \in \N}$ with $\pi_n \in \Gamma(\mu, \mu_n)$ such that
\begin{equation}
\lim_{n \rightarrow \infty} \int_{\R^d \times \R^d} | F(x)-F_n(y) |^2 d \pi_n(x,y) =0.
\label{auxTL2comp}
\end{equation}
We consider the measures $\tilde{\pi}_n:= (F \times F_n )_{\sharp} \pi_n$ for all $n \in \N$. It is straightforward to check that $\tilde{\pi}_n \in \Gamma(\tilde{\mu}, \tilde{\mu}_n)$ for all $n \in \N$ and by the definition of $\tilde{\pi}_n$ that
$$ 
\lim_{n \rightarrow \infty}  \int_{\R^k \times \R^k} | \tilde{x} - \tilde{y}|^2 d \tilde{\pi}_{n}(\tilde{x}, \tilde{y}) = \lim_{n \rightarrow \infty} \int_{\R^d \times \R^d} | F(x)-F_n(y) |^2 d \pi_n(x,y) =0$$
In other words, $\left\{ \tilde{\pi}_n \right\}_{n \in \N}$ is a stagnating sequence of transportation maps with $\tilde{\pi}_n \in \Gamma(\tilde{\mu}, \tilde{\mu}_n )$. From the fact that $  (\tilde{\mu}_n , \tilde{f}_n ) \overset{{TL^2}}{\longrightarrow} (\tilde{\mu}, \tilde{f}) \quad \text{as } n \rightarrow \infty$ and from Proposition \ref{EquivalenceTLp} it follows that
$$ \lim_{n \rightarrow \infty}  \int_{\R^k \times \R^k}| \tilde{f}(\tilde{x}) - \tilde{f}_n(\tilde{y})|^2 d \tilde{\pi}_n(\tilde{x}, \tilde{y}) =0.  $$
But again by the definition of $\tilde{\pi}_n$ we deduce
$$\lim_{n \rightarrow \infty}  \int_{\R^d \times \R^d}| \tilde{f}(F(x)) - \tilde{f}_n(F_n(y))|^2 d \pi_n(x, y)    = \lim_{n \rightarrow \infty}  \int_{\R^k \times \R^k}| \tilde{f}(\tilde{x}) - \tilde{f}_n(\tilde{y})|^2 d \tilde{\pi}_n(\tilde{x}, \tilde{y}) =0   $$
Using again Proposition \ref{EquivalenceTLp} we obtain the desired result.
\end{proof}

%%%%%%%%%%%%%%%%%%%%%%%%%%%%%
\subsection{$\Gamma$-convergence.} \label{sec:gamma}
We recall the notion of $\Gamma$-convergence in general setting. 
\begin{definition}
Let $(X,d_X)$ be a metric space and let $(\Omega , \mathcal{F}, \mathbb{P})$ be a probability space. Let $\left\{ F_n\right\}_{n \in \N}$ be a sequence of (random) functionals $F_n: X \times \Omega \rightarrow [0, \infty] $ and let $F$ be a (deterministic) functional $F : X \rightarrow [0, \infty]$. We say that the sequence of functionals $\left\{ F_n \right\}_{n \in \N}$ $\Gamma$-converges (in the $d_X$ metric) to $F$, if for almost every $\omega \in \Omega$, \textbf{all} of the following conditions hold:
\begin{enumerate}
\item \textbf{Liminf inequality:} For all $x \in X$ and all sequences $\left\{ x_n\right\}_{n \in \N}$ converging to $x$ in the metric $d_X$ it is true that
$$  \liminf_{n \rightarrow \infty} F_n(x) \geq F(x)   $$
\item \textbf{Limsup inequality:} For all $x \in X$ there exists a sequence $\left\{ x_n\right\}_{n \in \N}$ converging to $x$ in the metric $d_X$ such that
$$  \limsup_{n \rightarrow \infty} F_n(x) \leq F(x)   $$
\end{enumerate}
\end{definition}
The notion of $\Gamma$-convergence is particularly useful when combined with an appropriate notion of compactness. See \cite{braides2002gamma,DalMaso}.
\begin{definition}
We say that  the sequence of nonnegative random functionals $\left\{ F_n\right\}_{n \in \mathbb{N}}$ satisfies the compactness property if for almost every $\omega \in \Omega$, it is true that every bounded (with respect to $d_X$) sequence $\left\{x_n\right\}_{n \in \N}$ in $X$ for which
\begin{equation*}
\sup_{n \in \N} F_{n}(x) < \infty,
\end{equation*} 
is precompact in $X$.
\label{defCompac}
\end{definition}
Now that we have defined the $TL^2$-space, and we have defined the notion of $\Gamma$-convergence, we can rephrase the content of Theorem \ref{DiscreteGamma} in the following way. Under the conditions on the domain $D$, the density $\rho$ and the parameter $\veps_n$ in Theorem \ref{DiscreteGamma}, with probability one, \textbf{all} of the following statements hold:
\begin{enumerate}
\item \textbf{Liminf inequality:} For all $u \in L^2(\nu)$, and all sequences $\left\{  u_n\right\}_{n \in \N}$ with $u_n \in L^2( \nu_n)$ and with $u_n \overset{{TL^2}}{\longrightarrow} u$ it is true that
$$  \liminf_{n \rightarrow \infty}  G_{n, \veps_n}(u_n) \geq \sigma_\eta G(u). $$
\item \textbf{Limsup inequality:} For all $u \in L^2(\nu)$, there exists a sequence $\left\{  u_n\right\}_{n \in \N}$ with $u_n \in L^2( \nu_n)$ and with $u_n \overset{{TL^2}}{\longrightarrow} u$ for which 
$$  \limsup_{n \rightarrow \infty}  G_{n, \veps_n}(u_n) \leq \sigma_\eta G(u). $$
\item \textbf{Compactness:} Every sequence $\left\{u_n\right\}_{n \in \N}$ with $u_n \in L^2(\nu_n)$, satisfying
\begin{equation*}
\sup_{n \in \N} F_{n}(x) < \infty, 
\end{equation*} 
is precompact in $TL^2$, that is, every subsequence of $\left\{u_n\right\}_{n \in \N}$ has a further subsequence, which converges in the $TL^2$-sense to an element of $L^2(D)$.
\end{enumerate}
In a similar fashion we can rephrase the content of Theorem \ref{DiscreteGammaNormalized}.

%Since the functionals $G_{n, \veps_n}$ and $\overline{G}_{n , \veps}$ depend on data (and hence are random), we need to recall  what it means for a sequence of random functionals to $\Gamma$-converge to a deterministic functional. 
%We remark that the previous definition guarantees that the conclusions of Proposition \ref{comp_gen} hold with probability one. We use the convention of omitting the dependence of $F_n$ on $\omega$ and write $F_n:X \rightarrow [0 , \infty] $, keeping in mind that we are always working with a fixed  $\omega \in \Omega$, and hence we can think that we are working with deterministic functionals. This convention of course applies to the functionals $G_{n , \veps_n}$  and $\overline{G}_{n , \veps_n}$ studied in this paper.

\subsection{Stability of $k$-means clustering} \label{sec:kmeans}
Here we prove some basic facts about the functional $F_{\mu,k}$ defined in \eqref{kmeans} and about Theorem \ref{th:km}. Our first observation is that there exist minimizers of $F_{\mu,k}$. We note that $F_{\mu,k}$ is a continuous function which is non-negative and the existence of minimizers can be obtained from a straightforward application of the direct method of the calculus of variations as we now illustrate. If the support of $\mu$ has $k$ or fewer points, then including these points in $\z$ provides a minimizer for which $F(\z)=0$.  On the other hand, if the support of $\mu$ has more than $k$ points then to show that a minimizer exists it is enough to obtain pre-compactness of a minimizing sequence due to the continuity of $F_{\mu,k}$. Let $\left\{ \z^m \right\}_{m \in \N}$ be a minimizing sequence of $F_{\mu,k}$.

By considering a subsequence we can assume that for any $i=1, \dots, k$, $z_i^m$ either converges to
some $z_i \in \R^N$ or diverges to $\pm \infty$. Also without the loss of generality we can assume that for some $1 \leq l \leq k+1$, the sequence $\left\{ z_i^m\right\}_{m \in \N}$ converges for $i < l$ and diverges for $i \geq l$. Our goal is to show that $l = k+1$. Assume for the sake of contradiction that $l \leq k$. First note that if $l=1$ (when no subsequence converges) then $F_{\mu,k}(\z^m) \to \infty$ as $m  \to \infty$, which is impossible. So we can assume that $z_1^m$ converges to $z_1$ as $m \to \infty$. 
%We can assume that for all $m$, $|z_1^m| < R = |z_1|+1$. ...
It is straightforward to show using the finiteness of the second moment of $\mu$, that 
\begin{equation}
F_{\mu,k}(\z^m) \to F_{\mu, l-1}(\left\{z_1,\dots,z_{l-1} \right\}), \quad \text{as } m \to \infty.
\label{auxStabilitykmeans1}
\end{equation}
However unless $l=k+1$, adding $k-(l-1)$ points from $\supp(\mu) \backslash \{z_1, \dots, z_{l-1}\}$ to $\{z_1, \dots, z_{l-1}\}$  would result on a value of $F_{\mu,k}$ that is strictly below $F_{\mu, l-1}(\left\{z_1,\dots,z_{l-1} \right\})$ and from \eqref{auxStabilitykmeans1}, this would contradict the assumption that $\left\{ \z^m \right\}_{m \in \N}$ is a minimizing sequence. We conclude that $\left\{ \z^m\right\}_{m \in \N}$ converges up to subsequence.

We now turn to comparing the properties of $F_{\mu,k}$ for different measures $\mu$, the ultimate goal is to prove Theorem \ref{th:km}. Let $\mu$ and $\nu$ be Borel probability measures on $\R^N$ with finite second moments and let  $\pi \in \Gamma(\mu,\nu)$ be the optimal transportation plan realizing the Wasserstein distance between $\mu$ and $\nu$, that is, assume 
$$d_2^2(\mu, \nu) = \iint_{\R^N \times \R^N} |x-y|^2 d\pi(x,y).$$  
Then
\begin{align*}
|F_{\mu,k}(\z) - F_{\nu,k}(\z)| & = \left| \int_{\R^N} d(x,\z)^2 d \mu(x) -  \int_{\R^N} d(y,\z)^2 d\nu(y) \right| \\
& = \left| \iint_{\R^N\times \R^N} \left( d(x,\z)^2 - d(y,\z)^2 \right) d\pi(x,y) \right|  \\
& \leq  \iint_{\R^N\times \R^N} \left|  d(x,\z)^2 - d(y,\z)^2 \right|  d\pi(x,y)  \\
& \leq \iint_{\R^N \times \R^N} (|x-y| + d(y,\z))^2 - d(y,\z)^2 d\pi(x,y) \\
& \leq d_2^2(\mu,\nu) + 2d_2(\mu, \nu) \sqrt{F_{\nu,k}(\z)},
\end{align*}
where the last inequality is obtained after expanding the integrand and using Cauchy-Schwartz inequality. By symmetry, we conclude that
\begin{equation} \label{contFmu}
| F_{\mu,k}(\z) - F_{\nu,k}(\z)|  \leq d_2(\mu, \nu) \left(2\min \left\{  \sqrt{F_{\mu,k}(\z)},  \sqrt{F_{\nu,k}(\z)} \right\} +d_2(\mu, \nu) \right).
\end{equation}
We also need the following Lemma.

\begin{lemma} \label{lem:nomassbdry}
Let $\mu $ be a Borel probability measure on $\R^k$ with finite second moment and at least $k$ points in its support. 
Let $\z= (z_1,\dots, z_k)$ be a minimizer of the functional $F_{\mu,k}$. Denote by $V_1,\dots, V_k$ the (closed) Voronoi cells induced by the points $z_1,\dots, z_k$, i.e. , $V_i:= \left\{ x \in \R^k \: : \: |x-z_i| = d(x, \z)  \right\}$. Then,
$$  \mu(V_i\cap V_j ) = 0  , \quad \forall i\not = j.$$
\end{lemma}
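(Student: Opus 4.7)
The plan is to argue by contradiction. Suppose $\mu(V_i \cap V_j) > 0$ for some $i \neq j$, and set $A := V_i \cap V_j$.

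As a preliminary step, I would first show that any minimizer $\z$ of $F_{\mu,k}$ has distinct centers, i.e.\ $z_i \neq z_j$ for $i \neq j$. Indeed, if $z_i = z_j$ then $F_{\mu,k}(\z)$ depends only on the $k-1$ distinct values $\{z_l : l \neq i\}$, hence $F_{\mu,k}(\z) \geq \min F_{\mu,k-1}$. Since $|\supp\mu| \geq k$ by hypothesis, one can pick $p \in \supp\mu$ distinct from each $z_l$ with $l \neq i$; a sufficiently small ball around $p$ carries positive $\mu$-mass and lies strictly closer to $p$ than to any $z_l$, so augmenting $\{z_l\}_{l \neq i}$ with $p$ yields a configuration with strictly smaller cost, forcing $\min F_{\mu,k} < \min F_{\mu,k-1}$ and contradicting the optimality of $\z$.

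The heart of the argument is the first-order (centroid) condition along valid partitions. For any measurable partition $\R^k = W_1 \sqcup \cdots \sqcup W_k$ with $W_l \subseteq V_l$, the identity $d(x,\z) = |x - z_l|$ on $W_l$ gives $F_{\mu,k}(\z) = \sum_l \int_{W_l} |x - z_l|^2\, d\mu$. Perturbing only the $l$-th center to $z_l^\veps = z_l + \veps v$, the partition $\{W_m\}$ is merely sub-optimal for $\z^\veps$, while $F_{\mu,k}(\z^\veps) \geq F_{\mu,k}(\z)$ by minimality; combining these,
\begin{equation*}
0 \leq F_{\mu,k}(\z^\veps) - F_{\mu,k}(\z) \leq -2\veps \Big\langle v, \int_{W_l}(x-z_l)\, d\mu \Big\rangle + \veps^2 |v|^2 \mu(W_l),
\end{equation*}
for every $v \in \R^k$ and every $\veps \in \R$. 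Dividing by $\veps$ and sending $\veps \to 0^\pm$ forces the centroid condition $z_l = \mu(W_l)^{-1}\int_{W_l} x\, d\mu$ whenever $\mu(W_l) > 0$.

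To derive the contradiction, I will exploit that $A \subseteq V_i \cap V_j$ can be freely assigned either to $W_i$ or to $W_j$ in a valid partition. Fix a valid partition $\{W_l^{(1)}\}$ with $A \subseteq W_i^{(1)}$, and let $\{W_l^{(2)}\}$ coincide with $\{W_l^{(1)}\}$ outside $A$ and assign $A$ to $W_j^{(2)}$ instead. Then $W_i^{(1)} = W_i^{(2)} \sqcup A$ and $W_j^{(2)} = W_j^{(1)} \sqcup A$, with $\mu(W_i^{(1)}), \mu(W_j^{(2)}) \geq \mu(A) > 0$. When additionally $\mu(W_i^{(2)})>0$ and $\mu(W_j^{(1)})>0$, applying the centroid condition in both partitions and using $\mu(A) > 0$ gives, after a short computation, $z_i = c_A = z_j$, where $c_A := \mu(A)^{-1} \int_A x\, d\mu$. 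This contradicts the preliminary step. The main obstacle is the degenerate case where $\mu(W_i^{(2)}) = 0$ or $\mu(W_j^{(1)}) = 0$, in which the centroid condition is vacuous on that cell. This is handled by observing that then $W_i^{(1)}$ (respectively $W_j^{(2)}$) coincides with $A$ modulo a $\mu$-null set, so the centroid condition on that cell directly yields $z_i = c_A$ (respectively $z_j = c_A$); in every case one recovers $z_i = z_j = c_A$, completing the contradiction.
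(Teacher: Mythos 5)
Your proof is correct and reaches the same conclusion by a noticeably different route. Both arguments hinge on the same centroid (first-order) condition and on the freedom to reassign the boundary set $A=V_i\cap V_j$ between the two cells, but they extract the contradiction differently. The paper observes that $A$ lies on the perpendicular bisector hyperplane of the segment $[z_i,z_j]$, decomposes $F_{\mu,k}(\z)$ once with $V_i$ as the $z_i$-cell and once with $V_i\setminus V_j$, and concludes that the centroids of $\mu_{\llcorner V_i}$ and $\mu_{\llcorner V_i\setminus V_j}$ differ strictly in the $z_j-z_i$ direction while both must equal $z_i$ --- a contradiction involving only $z_i$ and an explicit piece of Euclidean geometry. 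You instead perturb to derive the centroid condition, toggle $A$ between the $i$-th and $j$-th cells in two partitions, and by a purely algebraic mean-splitting argument conclude $z_i=c_A=z_j$, contradicting distinctness. Your version has two small advantages: it explicitly proves that the centers $z_1,\dots,z_k$ are pairwise distinct (the paper asserts this without argument), and it explicitly covers the degenerate case $\mu(V_i\setminus V_j)=0$, where the paper's computation as literally written would divide by $\underline{\mu}_i(\R^k)=0$; the geometric patch (centroid of a measure concentrated on $P_{ij}$ lies on $P_{ij}$, but $z_i\notin P_{ij}$) is easy but is left implicit in the paper. The paper's version is a bit shorter once the hyperplane observation is made and only involves one center; yours is more mechanical and dimension-free.
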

\begin{proof}
We start by recalling that if $k=1$ then the minimizer $z_1$ of $F_{\mu,1}$ is the centroid of $\mu$, that is $z_1 = \int x d \mu(x)$. We now consider $k \geq 2$. 
Since the support of $\mu$ has at least $k$ points, the points $z_1, \dots, z_k$ are distinct.
Assume that $\mu(V_i \cap V_j) >0$ for some $i \not =j$. Note that the set $V_i \cap V_j$ is contained in the plane $P_{ij}$ with normal vector $z_i - z_j$, and containing the point $\frac{1}{2}z_i + \frac{1}{2}z_j$. 
Let $\overline \mu_i = \mu \llcorner_{V_i}$ and $\overline \theta_i = \mu - \overline \mu_i$. Let $\hat \z = (z_1, \dots, z_{i-1}, z_{i+1}, \dots, z_n)$.
Note that $F_{\mu,k}(\z) = F_{\overline \mu_i, 1}(z_i) +  F_{\overline \theta_i, k-1}(\hat \z)$. Consequently $z_i$ minimizes $F_{\overline \mu_i, 1}$ and by remark above, $z_i$ is the centroid of $\overline \mu_i$, that is
$$  z_i =\frac{1}{\overline \mu_i(\R^k)} \int_{\R^k} x d  \overline \mu_i(x).$$
Now, let $\underline \mu_i = \mu_{\llcorner V_i \backslash V_j}$ and  $\underline \theta_i = \mu - \underline \mu_i$. Analogously to above 
$F_{\mu,k}(\z) = F_{\underline \mu_i, 1}(z_i) +  F_{\underline \theta_i, k-1}(\hat \z)$.
Hence $z_i$ minimizes $F_{\underline \mu_i, 1}$ and thus is the centroid of $\underline \mu_i$, i.e., 
$$  z_i =\frac{1}{\underline \mu_i(\R^k)} \int_{\R^k} x d  \underline \mu_i(x).$$
But note that $\mu(V_i \cap V_j) > 0$ implies that 
$$\frac{1}{\overline \mu_i(\R^k)} \int_{\R^k}  \langle x ,  z_j - z_i \rangle  d  \overline \mu_i(x) > \frac{1}{\underline \mu_i(\R^k)} \int_{\R^k} \langle x ,  z_j - z_i\rangle d  \underline \mu_i(x). $$
This, contradicts the fact that the centroids of $\underline \mu_i$ and $\overline \mu_i$ are both equal to $z_i$.
\end{proof}

The proof of Theorem \ref{th:km} is now a direct consequence of \eqref{contFmu} and Lemma \ref{lem:nomassbdry}.
\begin{proof}[Proof of Theorem \ref{th:km}]
Let $a^k = F_{\mu,k}(\z^k)$ and $a_m^k = F_{\mu_m,k}(\z_m^k)$, where $\z^k$ is a minimizer of $F_{\mu,k}$ and where $\z_m^k$ is a minimizer of $F_{\mu_m,k}$. Note that since the support of $\mu$ has at least $k$ points, $a_l > a_k$ for all $l <k$.
From \eqref{contFmu} it follows that $a_m^k \to a^k$ as $m \to \infty$ for any $k \in \N$. 
To show that  $\{\z_m^k, m \in \N\}$ is precompact it is enough to show that all coordinates are uniformly bounded. If this is not the case then there exists $1 \leq l \leq k$ such that coordinates
$1$ to $l$ converge, while those between $l+1$ and $k$ diverge to $\pm \infty$. 
Arguing as in the proof of the existence of minimizers at the beginning of this Section, and using \eqref{contFmu}, one obtains that 
$F_{\mu_m,k}(\z_m^k)$ converges to $F_{\mu , l}(\left\{ z_1,\dots,z_l \right\})$ for some $z_1, \dots, z_l \in \R^N$. If $l<k$, then this would imply that $a_l \leq  F_{\mu , l}(\left\{z_1,\dots,z_l\right\}) = \lim_{m \to \infty} a_m^k = a_k$, which would contradict the fact that $a_l > a_k$. Thus concluding that along a subsequence $\z_m^k \to \overline{\z}^k$ for some $\overline{\z}^k$. To show that $\overline{\z}^k$ minimizes $F_{\mu,k}$ simply observe that from \eqref{contFmu} and the continuity of $F_{\mu,k}$ it follows that
$$ F_{\mu,k}(\overline{\z}^k) = \lim_{m \rightarrow \infty} F_{\mu,k}(\z_m^k) = \lim_{n \rightarrow \infty} F_{\mu_m,k}(\z_m^k)= \lim_{m \rightarrow \infty} a_m^k= a^k, $$
which implies that indeed $\overline{\z}^k$ minimizes $F_{\mu,k}$.

Finally, the last part of the Theorem on convergence of clusters, follows from the fact that $\mu_m$ converge weakly to $\mu$, that their second moments are uniformly bounded, and that the boundaries of Voronoi cells change continuously when the centers are perturbed. 
\end{proof}

\subsection{The Spectra of $\mathcal{L} $, $ \mathcal{N}^{sym}$ and $\mathcal{N}^{rw}$}\label{SectionSpectrumL} 
The purpose of this section is to present some facts about the spectra of the %(formally defined)
 operators $\mathcal{L}$, $\mathcal{N}^{sym}$, and $\mathcal{N}^{rw}$. 
 %We start by defining rigorously what is meant by the spectrum of $\mathcal{L}$ via a weak formulation of equation \eqref{PDELimitFunctional0}. 
 These facts are standard %in the literature of linear elliptic PDEs
  (see \cite{evans2010partial}, or Chapter 8 in \cite{ButtazzoBook}). We present them for the convenience of the reader.

Let $D$ be an open, bounded, connected domain with Lipschitz boundary and let $\rho: D \rightarrow \R$ be a continuous density function satisfying \eqref{DensityBound}. For a given $w \in L^2(D)$ we consider the PDE
\begin{alignat}{2} \label{PDELimitFunctional} 
\begin{aligned}
 \mathcal{L}(u) &= w  \: & & \te{in }   D, \\ 
 \frac{\partial u}{\partial \bf{n}} & =0 \: &  & \te{on }  \partial D,
\end{aligned}
\end{alignat}
where we recall $\mathcal{L}$ is formally defined as $\mathcal{L}(u)= - \frac{1}{\rho}\divergence(  \rho^2 \nabla u )$. We say that $u \in H^1(D)$ is a \emph{weak solution} of \eqref{PDELimitFunctional} if
\begin{equation}
\int_{D} \nabla u \cdot \nabla v \rho^2(x) dx = \int_{D}vw \rho(x)dx, \quad \forall v \in H^1(D).
\label{weakEqn}
\end{equation}
\begin{remark}
Note that if $u$ is a solution of \eqref{PDELimitFunctional} in the classical sense, then integration by parts shows that $u$ is a weak solution of \eqref{PDELimitFunctional}. 
\end{remark}
A necessary condition for \eqref{PDELimitFunctional} to have a solution in the weak sense, is that $w $ belongs to the space
$$\mathcal{U}:= \left\{ w\in L^2(D) \: : \: \int_{D}w \rho(x) dx =0 \right\}.$$ 
This can be deduced by considering the test function $v \equiv 1$ in \eqref{weakEqn}. We consider the space
$$\mathcal{V}:= \left\{ v \in H^1(D) \: : \: \int_{D}v \rho(x) dx =0 \right\},$$ 
 and consider the bilinear form $a : \mathcal{V} \times \mathcal{V} \rightarrow \R$ given by
\begin{equation} \label{def:a}
  a(u,v) :=  \int_{D} \nabla u \cdot \nabla v  \rho^2\, dx. 
 \end{equation}
One can use the assumptions on $\rho$ in \eqref{DensityBound}, and Poincare's inequality (see Theorem 12.23 in \cite{Leoni}), to show that $a$ is coercive with respect to the $H^1$ inner product on $\mathcal{V}$, defined by
$$ \langle  u, v \rangle_{H^1(D)}:= \int_{D}u v dx + \int_{D} \nabla u \cdot \nabla v dx. $$
In addition,  $a$ is continuous and symmetric. 

Therefore by Lax-Milgram theorem \cite{evans2010partial}[Sec. 6.2] for any $w \in \mathcal{U}$ there exists a unique solution $u \in \mathcal{V}$ to \eqref{PDELimitFunctional}. From \eqref{weakEqn} and the assumption \eqref{DensityBound} on $\rho$, it follows that 
%From the continuity, coercivity and symmetry of $a$, we deduce that $a$ is an inner product on $\mathcal{V}$ which is equivalent to that inherited from $H^1(D)$. From now on we can consider $\mathcal{U}$ endowed with the inner product $\langle  \cdot, \cdot \rangle_{\rho}$ and $\mathcal{V}$ endowed with the inner product $a(\cdot, \cdot)$. The Riesz representation theorem then implies that for any $w \in \mathcal{U}$ there exists a unique solution $u \in \mathcal{V}$ to \eqref{PDELimitFunctional} and from \eqref{weakEqn} and the assumption \eqref{DensityBound} on $\rho$, it follows that 
\begin{equation}
\int_{D} | \nabla u |^2 \rho^2(x) dx \leq C \int_{D} | w |^2  \rho(x)dx,
\label{ContinuityOfInverse}
\end{equation}
for a constant $C$. We can then define the inverse $\mathcal{L}^{-1}: \mathcal{U} \rightarrow \mathcal{V}$ of $\mathcal{L}$, by letting  $  \mathcal{L}^{-1}: w \mapsto  u,  $
where $u$ is the unique solution of \eqref{PDELimitFunctional}.  From \eqref{ContinuityOfInverse}, it follows that $\mathcal{L}^{-1}$ is a continuous linear function. 
Rellich--Kondrachov theorem (see Theorem 11.10 in \cite{Leoni}) implies that
$\mathcal{L}^{-1}$ is compact.
\medskip

We say that  $\lambda \in \R$ is an \emph{eigenvalue} of the operator $\mathcal{L}$, if there exists a nontrivial $u \in H^1(D)$ which is a weak solution of \eqref{PDELimitFunctional0}. That is if
\begin{equation} 
a(u,v) = \int_{D} \nabla u \cdot \nabla v \rho^2(x) dx = \lambda \int_{D}u v \rho(x)dx = \lambda \langle u,v \rangle_\rho\,, \quad  \forall v \in H^1(D).  
\label{DefEigenfuncWeak}
\end{equation}
Such function $u$ is called an \emph{eigenfunction}.
%
%We can now interpret equation \eqref{PDELimitFunctional0} defining the spectrum of $\mathcal{L}$ in the weak sense.
%\begin{definition}
%We say that $\lambda \in \R$ is an \emph{eigenvalue} of the operator $\mathcal{L}$, if there exists a nontrivial $u \in H^1(D)$ for which
%\begin{equation} 
%\int_{D} \nabla u \cdot \nabla v \rho^2(x) dx = \lambda \int_{D}u v \rho(x)dx\:, \: \forall v \in H^1(D).  
%\label{DefEigenfuncWeak}
%\end{equation}
%The function $u$ is then called an eigenfunction of $\mathcal{L}$ with eigenvalue $\lambda$.
%\label{DefEigenfuncWeak1}
%\end{definition}
\begin{remark}
We remark that $\lambda_1=0$ is an eigenvalue of $\mathcal{L}$ and that the function $u_1$ identically equal to one is an eigenfunction associated to $\lambda_1$. Given that $D$ is connected, it follows that the eigenspace associated to $\lambda_1=0$ is the space of constant functions on $D$. We also remark that $\mathcal{U}$ is by definition the orthogonal complement (with respect to the inner product $\langle\cdot,\cdot \rangle_\rho$) of $\Span \left\{u_1 \right\}$. 
\end{remark}
Using the definition of $\mathcal{L}$, the definition of weak solutions to \eqref{PDELimitFunctional} it follows that
\begin{equation}
u \text{ is an eigenfunction of } \mathcal{L} \text{ with eigenvalue } \lambda \not = 0 \; \text{ iff } \; \mathcal{L}^{-1}(u) = \frac{1}{\lambda} u.
\label{EigenFuncOfInverse} 
\end{equation}
In other words the non-constant eigenfunctions of $\mathcal{L}$ are the eigenfunctions of $\mathcal{L}^{-1}$, and the nonzero eigenvalues of $\mathcal{L}$ are the reciprocals of the eigenvalues of $\mathcal{L}$. Thus, by understanding the structure of the spectrum of $\mathcal{L}^{-1}$, one can obtain properties of the spectrum of $\mathcal{L}$.
\begin{proposition} \label{prop:spcomp}
The operator $\mathcal{L}^{-1}: \mathcal{V} \rightarrow \mathcal{V}$ is a  selfadjoint, positive semidefinite (with respect to the inner product $a(\cdot, \cdot)$) and compact.  The eigenvalues of $\mathcal{L}^{-1}$ can be arranged as a decreasing sequence of positive numbers,
$$ \lambda_2^{-1} \geq \lambda_{3}^{-1} \geq \dots  $$
repeated according to (finite) multiplicity and converging to zero. Moreover, there exists an orthonormal basis $\left\{v_k \right\}_{k \geq 2}$ of $\mathcal{V}$, where each of the functions $v_k$ is an eigenfunction of $\mathcal{L}^{-1}$ with corresponding eigenvalue $\lambda_k^{-1}$.
\end{proposition}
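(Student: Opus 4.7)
The plan is to verify the hypotheses of the spectral theorem for compact, self-adjoint, positive operators on a Hilbert space, applied to $\mathcal{L}^{-1}$ acting on $\mathcal{V}$ equipped with the inner product $a(\cdot,\cdot)$. First I would note that $\mathcal{V}\subseteq \mathcal{U}$ (since functions in $\mathcal{V}$ automatically satisfy $\int_D w\rho\,dx=0$), so $\mathcal{L}^{-1}$ is well-defined on $\mathcal{V}$ by the Lax--Milgram argument already recorded in the text, and its image lies in $\mathcal{V}$ by construction. I would also observe that Poincaré's inequality on $\mathcal{V}$, together with the two-sided bounds \eqref{DensityBound} on $\rho$, show that $a(\cdot,\cdot)$ is equivalent to the $H^1$ inner product on $\mathcal{V}$, so $(\mathcal{V},a)$ is a Hilbert space.

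Next I would establish symmetry and positivity at the same time from the single defining identity
\begin{equation*}
a(\mathcal{L}^{-1}w, v) \;=\; \langle w, v\rangle_\rho \qquad \forall v\in H^1(D),
\end{equation*}
which is just the weak formulation \eqref{weakEqn}. Taking $w=w_1,\,v=w_2$ and then $w=w_2,\,v=w_1$ and using symmetry of both $\langle\cdot,\cdot\rangle_\rho$ and $a(\cdot,\cdot)$ yields $a(\mathcal{L}^{-1}w_1,w_2)=a(w_1,\mathcal{L}^{-1}w_2)$. Taking $v=w$ gives $a(\mathcal{L}^{-1}w,w)=\|w\|_\rho^2\ge 0$, with equality only when $w=0$; this is in fact strict positivity, and in particular implies that $\mathcal{L}^{-1}$ has trivial kernel.

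For compactness I would factor $\mathcal{L}^{-1}\colon \mathcal{V}\to \mathcal{V}$ (with the $H^1$-topology on both sides) through the embedding $\mathcal{V}\hookrightarrow L^2(D)$. Given a bounded sequence $\{w_n\}\subset \mathcal{V}$ in the $a$-norm (equivalently, $H^1$-norm), the Rellich--Kondrachov theorem (already cited) provides an $L^2$-convergent subsequence $w_{n_k}\to w$; the a priori bound \eqref{ContinuityOfInverse}, applied to differences, then gives that $\mathcal{L}^{-1}w_{n_k}\to \mathcal{L}^{-1}w$ in $H^1$, proving compactness.

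Finally, the classical spectral theorem for compact self-adjoint operators on a Hilbert space produces an at most countable sequence of real eigenvalues accumulating only at $0$, of finite multiplicity, together with an $a$-orthonormal basis of eigenfunctions; positivity and injectivity force all eigenvalues to be strictly positive, hence arrangeable as $\lambda_2^{-1}\geq \lambda_3^{-1}\geq\cdots\to 0$ (the reciprocal notation being consistent with \eqref{EigenFuncOfInverse}). The main technical point, and the only step that is not entirely bookkeeping, is the compactness argument, where one must be careful that the appropriate embeddings have the correct continuity/compactness properties after restriction to the mean-zero subspaces $\mathcal{U}$ and $\mathcal{V}$; this is however immediate once one notes that both are closed subspaces of $L^2(D)$ and $H^1(D)$ respectively.
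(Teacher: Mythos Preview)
Your proposal is correct and follows essentially the same route as the paper: both hinge on the identity $a(\mathcal{L}^{-1}w,v)=\langle w,v\rangle_\rho$ to obtain self-adjointness and positivity, invoke Rellich--Kondrachov for compactness, and then apply the spectral theorem for compact self-adjoint operators. Your write-up is in fact more explicit on a couple of points (strict positivity/injectivity, and the factorization through $L^2$ in the compactness step) that the paper leaves implicit.
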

\begin{proof}
 In order to show that $\mathcal{L}^{-1}: \mathcal{V} \rightarrow \mathcal{V}$ is self-adjoint with respect to $a(\cdot, \cdot)$, take $v_1, v_2 \in \mathcal{V}$ and let $u_i= \mathcal{L}^{-1}v_i$ for $i=1,2$. 
We claim that 
$$  a( \mathcal{L}^{-1}v_1 , v_2   ) = \langle v_1 , v_2 \rangle_\rho.   $$ 
In fact, from the definition of $\mathcal{L}^{-1}$ it follows that
$$  a( \mathcal{L}^{-1}v_1 , v_2   ) = a(u_1, v_2) = \int_{D} \nabla u_1 \cdot \nabla v_2 \rho^2(x) dx = \int_{D} v_1 v_2 \rho(x) dx = \langle v_1, v_2   \rangle_\rho      $$
From the previous identity, it immediately follows that $\mathcal{L}^{-1}$ is self-adjoint and positive semidefinite with respect to the inner product $a(\cdot, \cdot)$.
The compactness of $\mathcal{L}^{-1}$ follows from Rellich--Kondrachov theorem (see Theorem 11.10 in \cite{Leoni}). The statements about the spectrum of $\mathcal{L}^{-1}$ are a direct consequence of Riesz-Schauder theorem and Hilbert-Schmidt theorem  (see \cite{SimonReed}).
\end{proof}
For $k \geq 2$, let $v_k$ be eigenfunctions as in the previous proposition and define $u_k$  by
\begin{equation}
 u_k := \sqrt{\lambda_k} v_k.
 \label{orthonormalBase} 
 \end{equation}
We claim that $\left\{ u_k \right\}_{k \geq 2}$ is an orthonormal base of $\mathcal{U}$ with respect to $\langle \cdot,\cdot \rangle_{\rho}$. In fact, it follows from the definition of $\mathcal{L}^{-1}$ and\eqref{DefEigenfuncWeak} that
$$\delta_{kl} = a(v_k, v_l) = \lambda_k \langle v_k , v_l \rangle_{\rho}  = \frac{\sqrt{\lambda_k}}{\sqrt{\lambda_l}} \langle u_k , u_l , \rangle_\rho,$$
where $\delta _{kl} =1$ if $k=l$ and $\delta_{kl}=0$ if $k \not = l$. Hence 
$ \langle u_k , u_l , \rangle_\rho = \delta_{kl}$.
In other words $\left\{ u_k \right\}_{k \geq 2}$ is an orthonormal set.
Completeness follows from the completeness in Proposition \ref{prop:spcomp} and density of $H^1(D)$ in $L^2(D)$.
%
% Now let us show, that actually, $\left\{ u_k\right\}_{k \geq 2}$ is a orthonormal base for $\mathcal{U}$. Suppose that $w \in \mathcal{U}$ is such that $\langle
%w , u_k  \rangle_\rho =0 $ for all $k$, we want to show that $w \equiv 0$. Note that by definition of the $u_k$ and by \eqref{weakEqn}, we have for all $k \geq 2$
%$$ 0 =   \langle w , v_k  \rangle_\rho = a(\mathcal{L}^{-1}w , v_k ).    $$  
%Since $\left\{ v_k \right\}_{k \geq 2}$ is a orthonormal base for $\mathcal{V}$, we conclude that $\mathcal{L}^{-1}w \equiv 0$. Using \eqref{weakEqn} again, we conclude that
%$$ \int_{D} w v \rho dx =0, \forall v \in H^1(D).  $$ 
%Since $H^1(D)$ is dense in $L^2(D)$, we conclude that $w \equiv 0$ as we wanted to show. 

By setting $u_1 \equiv 1$ and by noticing that $L^2(D)= \Span\left\{ u_1 \right\} \oplus \mathcal{U} $, we conclude that $\left\{ u_k \right\}_{k \in \N}$ is a orthonormal base for $L^2(D)$ with inner product $\langle \cdot ,\cdot \rangle_{\rho}$. 
The next proposition is a direct consequence of the previous discussion and \eqref{EigenFuncOfInverse}. \nc
\begin{proposition}
$\mathcal{L}$ has a countable family of eigenvalues $\left\{ \lambda_{k} \right\}_{k \in \N}$ which can be written as an increasing sequence of nonnegative numbers which tends to infinity as $k$ goes to infinity, that is,
$$  0=\lambda_1 < \lambda_2   \leq \dots \leq \lambda_k \leq \dots  $$
Each eigenvalue, is repeated according to (finite) multiplicity. Moreover, there exists $\left\{ u_k \right\}_{k\in \N}$ an orthonormal basis (with respect to $\langle\cdot, \cdot \rangle_\rho$)  of $L^2(D)$, such that for every $k\in \N$, $u_k$ is an eigenfunction of $\mathcal{L}$ associated to $\lambda_k$.
\label{PropoSpectrumL}
\end{proposition}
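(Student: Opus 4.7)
The plan is to transfer the spectral decomposition of the compact selfadjoint inverse $\mathcal L^{-1}$ (Proposition \ref{prop:spcomp}) to $\mathcal L$ itself, then combine it with the trivial eigenpair $(0,\mathbf 1)$. First I would record that $\lambda_1=0$ is an eigenvalue of $\mathcal L$ by observing that the constant function $u_1\equiv 1$ satisfies \eqref{DefEigenfuncWeak} with $\lambda=0$, and that by connectedness of $D$ together with the Poincar\'e inequality that underlies the coercivity of the form $a$ in \eqref{def:a}, the full eigenspace associated to $0$ is $\Span\{u_1\}$. Moreover, since $\nu$ is a probability measure on $D$, one has $\|u_1\|_\rho^2=\int_D \rho\,dx=1$, so $u_1$ is unit-norm in $L^2(D,\rho)$.

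Next I would apply Proposition \ref{prop:spcomp} on the closed subspace $\mathcal V\subset H^1(D)$ to produce an orthonormal basis $\{v_k\}_{k\geq 2}$ of $\mathcal V$ (with respect to the inner product $a(\cdot,\cdot)$) consisting of eigenfunctions of $\mathcal L^{-1}$, with eigenvalues $\lambda_2^{-1}\geq \lambda_3^{-1}\geq\cdots$ counted with multiplicity and converging to $0$. Invoking the equivalence \eqref{EigenFuncOfInverse}, the numbers $0<\lambda_2\leq\lambda_3\leq\cdots$ are exactly the nonzero eigenvalues of $\mathcal L$, arranged in nondecreasing order, tending to infinity, each of finite multiplicity (inherited from the compact operator $\mathcal L^{-1}$), and the $v_k$ are the corresponding eigenfunctions. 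The normalization $u_k:=\sqrt{\lambda_k}\,v_k$ introduced in \eqref{orthonormalBase} is the one established in the discussion following Proposition \ref{prop:spcomp} to convert $a$-orthonormality of $\{v_k\}$ into $\langle\cdot,\cdot\rangle_\rho$-orthonormality of $\{u_k\}_{k\geq 2}$ in $\mathcal U$.

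It then remains to glue the two pieces together. Since the test function $v\equiv 1$ in \eqref{DefEigenfuncWeak} shows that any eigenfunction with nonzero eigenvalue lies in $\mathcal U$, and conversely $u_1\in\Span\{u_1\}$ is $\langle\cdot,\cdot\rangle_\rho$-orthogonal to $\mathcal U$ by the very definition of $\mathcal U$, we get the orthogonal decomposition $L^2(D)=\Span\{u_1\}\oplus\mathcal U$ with respect to $\langle\cdot,\cdot\rangle_\rho$. Hence $\{u_k\}_{k\in\N}$ is a $\langle\cdot,\cdot\rangle_\rho$-orthonormal system in $L^2(D)$ and its span is dense, yielding the claimed orthonormal basis of eigenfunctions.

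The only nontrivial point in the argument—everything else being a bookkeeping translation from $\mathcal L^{-1}$ to $\mathcal L$—is establishing the completeness of $\{u_k\}_{k\geq 2}$ in $\mathcal U$ from the completeness of $\{v_k\}_{k\geq 2}$ in $\mathcal V$; here I would rely on the density of $H^1(D)$ in $L^2(D)$ (equivalently, of $\mathcal V$ in $\mathcal U$), together with the fact that the two inner products $a(\cdot,\cdot)$ and $\langle\cdot,\cdot\rangle_\rho$ induce comparable topologies on $\mathcal V$ via Poincar\'e and \eqref{DensityBound}, so that $a$-density implies $\langle\cdot,\cdot\rangle_\rho$-density after passing to the closure in $L^2(D)$.
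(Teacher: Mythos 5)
Your proof follows the paper's route exactly: reduce to the compact self-adjoint inverse $\mathcal{L}^{-1}$ on $\mathcal{V}$ from Proposition \ref{prop:spcomp}, transfer eigenvalues and eigenfunctions back to $\mathcal{L}$ via \eqref{EigenFuncOfInverse}, rescale $v_k$ to $u_k = \sqrt{\lambda_k}\,v_k$ to pass from $a$-orthonormality to $\langle\cdot,\cdot\rangle_\rho$-orthonormality, and combine with the constant eigenfunction via the decomposition $L^2(D) = \Span\{1\} \oplus \mathcal{U}$. One slight imprecision worth noting: the $a$-norm and the $\langle\cdot,\cdot\rangle_\rho$-norm are not \emph{comparable} on $\mathcal{V}$ (the former, being an $H^1$-type norm, controls the latter but not conversely), though that one-directional control is all your completeness argument actually needs, and the paper is equally terse on this point.
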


Finally we present the Courant-Fisher maxmini principle. 
\begin{proposition}
Consider an orthonormal base $\left\{  u_k\right\}_{k \in \N}$ for $L^2(D)$ with respect to the inner product $\langle \cdot, \cdot \rangle_\rho$, where for each $k\in \N$, $u_k$ is an eigenfunction of $\mathcal{L}$ with eigenvalue $\lambda_k$. Then, for every $k \in N$
\begin{equation}
\lambda_k = \min_{\|u\|_{\rho}=1 \:, \: u \in {S^*}^{\perp}}  G(u), 
\label{VarEigen1L}
\end{equation}
where $S^*=\Span\left\{ u_1, \dots, u_{k-1} \right\}$ and where $S^*$ denotes the orthogonal complement of $S^*$ with respect to the inner product $\langle \cdot, \cdot \rangle_{\rho}$. Additionally,
\begin{equation}
\lambda_{k} = \sup_{ S \in \Sigma_{k-1}} \min_{\|u\|_{\rho}=1 \:, \: u \in S^\perp}  G(u),
\label{VarEigen2L}
\end{equation}
where $\Sigma_{k-1}$ denotes the set of $(k-1)$-dimensional subspaces of $L^2(D)$, and where $S^\perp$ represents the orthogonal complement of $S$ with respect to the inner product $\langle \cdot, \cdot \rangle_{\rho}$.
\label{CourantFisher} 
\end{proposition}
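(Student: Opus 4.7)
The plan is to establish the two characterizations simultaneously via the spectral expansion of functions in $L^2(D,\rho)$. The central ingredient is the computation of the bilinear form $a$ on eigenfunctions: from the weak eigenvalue equation \eqref{DefEigenfuncWeak} applied with $v = u_l$, we obtain $a(u_j,u_l) = \lambda_j \langle u_j, u_l\rangle_\rho = \lambda_j \delta_{jl}$, so the eigenfunctions are $a$-orthogonal as well as $\langle \cdot,\cdot\rangle_\rho$-orthonormal. For any $u \in L^2(D)$, Proposition \ref{PropoSpectrumL} gives the expansion $u = \sum_{k} c_k u_k$ with $c_k = \langle u,u_k\rangle_\rho$ converging in $L^2(\rho)$ and $\|u\|_\rho^2 = \sum_k c_k^2$. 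When $u \in H^1(D)$, testing \eqref{DefEigenfuncWeak} with $v = u$ yields $a(u,u_k) = \lambda_k c_k$ for each $k$.

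First I would establish the energy identity: writing $P_N u := \sum_{k=1}^N c_k u_k$, the relation $a(u,u_k) = \lambda_k c_k = a(P_N u, u_k)$ shows that $a(u - P_N u, u_k) = 0$ for $k \leq N$, hence $a(u - P_N u, P_N u) = 0$ and
\begin{equation*}
G(u) = a(u,u) = a(u - P_N u, u - P_N u) + \sum_{k=1}^N \lambda_k c_k^2.
\end{equation*}
Since $a(u - P_N u, u - P_N u) \geq 0$, letting $N \to \infty$ gives the Bessel-type inequality $\sum_{k=1}^\infty \lambda_k c_k^2 \leq G(u)$, valid for all $u \in H^1(D)$. (For $u \in L^2(D) \setminus H^1(D)$ we have $G(u) = \infty$, so only $H^1$ competitors matter in \eqref{VarEigen1L}--\eqref{VarEigen2L}.)

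For \eqref{VarEigen1L}, if $u \in (S^*)^\perp$ with $\|u\|_\rho = 1$ then $c_1 = \cdots = c_{k-1} = 0$, so the Bessel inequality gives $G(u) \geq \sum_{j \geq k} \lambda_j c_j^2 \geq \lambda_k \sum_{j \geq k} c_j^2 = \lambda_k$. Equality is attained at $u = u_k$ since $G(u_k) = a(u_k,u_k) = \lambda_k$. For \eqref{VarEigen2L}, the lower bound $\geq \lambda_k$ is immediate by choosing $S = S^*$ and applying \eqref{VarEigen1L}. For the upper bound, given any $(k-1)$-dimensional $S \subset L^2(D)$, the subspace $\Span\{u_1,\dots,u_k\}$ has dimension $k$, so a dimension count produces a nonzero $v \in \Span\{u_1,\dots,u_k\} \cap S^\perp$; after normalization, $v = \sum_{j=1}^k c_j u_j$ with $\sum_{j=1}^k c_j^2 = 1$, and by the $a$-orthogonality,
\begin{equation*}
G(v) = a(v,v) = \sum_{j=1}^k \lambda_j c_j^2 \leq \lambda_k \sum_{j=1}^k c_j^2 = \lambda_k,
\end{equation*}
so the inner minimum over $\{\|u\|_\rho = 1,\ u \in S^\perp\}$ is at most $\lambda_k$ for every such $S$.

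The only subtle point I foresee is the energy identity in the first step: the series $\sum c_k u_k$ converges to $u$ only in $L^2(\rho)$ a priori, not in $H^1$, so one cannot simply write $G(u) = \sum \lambda_k c_k^2$ directly. The trick is that the partial-sum computation above never requires $H^1$-convergence of the full series; it only uses exact $a$-orthogonality on finite truncations and the consequent Pythagorean decomposition, which suffices to produce the one-sided Bessel inequality needed for the lower bound in \eqref{VarEigen1L}. Everything else is a clean dimension-counting argument.
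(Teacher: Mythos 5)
Your proof is correct and self-contained. Note that the paper does not actually supply its own proof here — it cites Chapter 8.3 of Bucur--Buttazzo — so your argument supplements the paper rather than paralleling a given one. What you wrote is the standard Courant--Fischer argument via the spectral expansion: $a$-orthogonality of eigenfunctions (from the weak eigenvalue equation with eigenfunction test functions), the Bessel inequality $\sum_k \lambda_k c_k^2 \le G(u)$ obtained by Pythagoras on finite truncations, the lower bound $\lambda_k$ on $(S^*)^\perp$ with equality at $u_k$, and the dimension-counting upper bound for arbitrary $S \in \Sigma_{k-1}$. The one genuinely delicate point — that $\sum_k c_k u_k$ converges to $u$ only in $L^2(\rho)$ and not a priori in $H^1$, so one cannot pass directly to the identity $G(u)=\sum_k \lambda_k c_k^2$ — you handled correctly: the one-sided inequality from finite truncations and nonnegativity of $a$ is exactly what the Rayleigh-quotient lower bound requires, and the equality case is checked separately at $u_k$. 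The dimension-count step is also fine, since the orthogonal projection of $\Span\{u_1,\dots,u_k\}$ onto a $(k-1)$-dimensional $S$ necessarily has nontrivial kernel, and that kernel lies in $S^\perp$. No gaps.
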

The proof (of a similar statement) can be found in Chapter 8.3 in \cite{ButtazzoBook}. 

\begin{remark}
If the density $\rho$ is smooth, then the eigenfunctions of $\mathcal{L}$ are smooth inside $D$. 
\end{remark}
\medskip 

We now turn to  the spectrum of $\mathcal{N}^{sym}$.
We say that $\tau \in \R$ is an \emph{eigenvalue} of the operator $\mathcal{N}^{sym}$, if there exists a nontrivial $u \in H^1_{\sqrt{\rho}}(D)$ which solves \eqref{PDELimitFunctionalN}. That is if 
\begin{equation} 
\int_{D} \nabla \left( \frac{u}{\sqrt{\rho}} \right) \cdot \nabla \left( \frac{v}{\sqrt{\rho}} \right)\rho^2(x) dx = \tau \int_{D}u v \rho(x)dx, \quad \forall v \in H_{\sqrt{\rho}}^1(D).  
%\label{DefEigenfuncWeak}
\end{equation}
The function $u$ is then called an eigenfunction of $\mathcal{N}^{sym}$ with eigenvalue $\tau$.
%\label{DefEigenfuncWeak1}
\begin{remark}
We remark that $\tau_1=0$ is an eigenvalue of $\mathcal{N}^{sym}$ and that the function $\overline{u}_1$ equal to
$$ \overline{u}_1(x)= \frac{\sqrt{\rho(x)}}{\| \sqrt{\rho}\|_{\rho}} $$
is an eigenfunction of $\mathcal{N}^{sym}$, with eigenvalue $\tau_1=0$.  Given that $D$ is connected, it actually follows that $\tau_1=0$ has multiplicity one and thus the eigenspace associated to $\tau_1=0$ is the space of multiples of $\sqrt{\rho}$. 
\end{remark}
Following the same ideas used when considering the spectrum of $\mathcal{L}$, we can establish the following analogous results. 
\begin{proposition}
$\mathcal{N}^{sym}$ has a countable family of eigenvalues $\left\{ \tau_{k} \right\}_{k \in \N}$  which can be written as an increasing sequence of nonnegative numbers which tends to infinity as $k$ goes to infinity, that is,
$$  0=\tau_1\leq \tau_2   \leq \dots \leq \tau_k \leq \dots  $$
Each eigenvalue, is repeated according to (finite) multiplicity. Moreover, there exists $\left\{ \overline{u}_k \right\}_{k\in \N}$ an orthonormal basis (with respect to $\langle\cdot, \cdot \rangle_{\rho}$)  of $L^2(D)$, such that for every $k\in \N$, $\overline{u}_k$ is an eigenfunction of $ \mathcal{N}^{sym}$ associated to $\tau_k$. 
\end{proposition}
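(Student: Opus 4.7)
The plan is to reduce the proof to the same argument already carried out for $\mathcal L$ in Proposition \ref{PropoSpectrumL}, via the substitution $\tilde u := u/\sqrt{\rho}$. Setting $\tilde v := v/\sqrt{\rho}$ in the weak formulation of the eigenvalue problem for $\mathcal N^{sym}$, we see that $u \in H^1_{\sqrt{\rho}}(D)$ is an eigenfunction with eigenvalue $\tau$ precisely when $\tilde u \in H^1(D)$ satisfies
\[ \int_D \nabla \tilde u \cdot \nabla \tilde v \, \rho^2 \, dx \;=\; \tau \int_D \tilde u\, \tilde v\, \rho^2\, dx \qquad \forall\, \tilde v \in H^1(D). \]
This is a generalized eigenvalue problem on $H^1(D)$ with the same bilinear form $a(\cdot,\cdot)$ as in \eqref{def:a}, but with mass weighted by $\rho^2$ on the right hand side instead of $\rho$.

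First I would split off the trivial eigenpair: because $D$ is connected and the bilinear form involves $|\nabla \tilde u|^2$, the kernel of the problem consists of constants, so the eigenspace for $\tau_1 = 0$ is one-dimensional and a unit eigenfunction in $L^2(\nu)$ is $\overline u_1 = \sqrt{\rho}/\|\sqrt{\rho}\|_\rho$. On the orthogonal complement I would define the solution operator $T: \mathcal U' \to \mathcal V'$, where
\[ \mathcal V' := \Bigl\{ u \in H^1(D)\,:\, \int_D u\, \rho^2\, dx = 0\Bigr\}, \qquad \mathcal U' := \Bigl\{ w \in L^2(D)\,:\, \int_D w\, \rho^2\, dx = 0\Bigr\},\]
with $Tw$ the unique $u \in \mathcal V'$ satisfying $a(u,v) = \int_D w v\, \rho^2\, dx$ for all $v \in \mathcal V'$. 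Existence and uniqueness follow from Lax--Milgram exactly as in the proof for $\mathcal L$: $a$ is continuous, symmetric, and coercive on $\mathcal V'$ with respect to the $H^1$-inner product, with coercivity provided by the lower bound $\rho \geq m > 0$ together with a Poincar\'e--Wirtinger inequality on $\mathcal V'$ (valid because the $\rho^2$-mean-zero constraint, combined with $\rho$ bounded above and below, differs from the standard Lebesgue mean-zero constraint only up to fixed constants).

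Next I would show $T$ is self-adjoint and positive semidefinite with respect to $a(\cdot,\cdot)$ by the same two-line computation as in Proposition \ref{prop:spcomp}, and compact by Rellich--Kondrachov (continuity of $T$ from $\mathcal U'$ into $H^1(D)$ followed by the compact embedding $H^1(D) \hookrightarrow L^2(D)$). Riesz--Schauder together with the Hilbert--Schmidt theorem then yield a decreasing sequence of positive eigenvalues of $T$ converging to $0$ and an $a$-orthonormal basis of $\mathcal V'$ consisting of corresponding eigenfunctions; as in \eqref{orthonormalBase}, rescaling them produces $\{\tilde u_k\}_{k \geq 2}$ orthonormal with respect to $\langle\cdot,\cdot\rangle_{\rho^2\,dx}$, and the reciprocals of the eigenvalues of $T$ give the sequence $0 < \tau_2 \leq \tau_3 \leq \cdots \to \infty$ of positive eigenvalues of $\mathcal N^{sym}$, each with finite multiplicity.

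Finally I would transfer back by setting $\overline u_k := \sqrt{\rho}\, \tilde u_k$ for $k \geq 2$. The key point is that the map $u \mapsto u/\sqrt{\rho}$ is an isometry from $L^2(\nu) = L^2(\rho\,dx)$ onto $L^2(\rho^2\,dx)$ since
\[ \int_D |\tilde u|^2 \rho^2\, dx \;=\; \int_D |u|^2 \rho\, dx, \]
so $\{\overline u_k\}_{k \geq 2}$ is orthonormal in $L^2(\nu)$ on the orthogonal complement of $\Span\{\overline u_1\}$; together with $\overline u_1$, and using density of $H^1(D)$ in $L^2(D)$ (pulled back via the isometry), this yields the desired orthonormal basis of $L^2(D)$ with respect to $\langle\cdot,\cdot\rangle_\rho$. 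The main obstacle is purely bookkeeping: one must be careful that the natural Hilbert space in which $\tilde u$ lives is $L^2(\rho^2\,dx)$ rather than $L^2(\nu)$, and that the weight appearing in the ``mass'' side of the eigenvalue equation has changed from $\rho$ (for $\mathcal L$) to $\rho^2$ (for $\mathcal N^{sym}$); once this correspondence is tracked correctly, the argument is verbatim the one used for $\mathcal L$.
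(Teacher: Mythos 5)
Your proof is correct and carries out exactly the route the paper sketches when it says the argument ``follows the same ideas used when considering the spectrum of $\mathcal L$.'' The key reduction via $\tilde u = u/\sqrt{\rho}$, which converts the $\mathcal N^{sym}$ eigenproblem into a generalized eigenproblem for $a(\cdot,\cdot)$ with $\rho^2$-weighted mass and uses the isometry $L^2(\rho\,dx) \to L^2(\rho^2\,dx)$ to transfer back, is precisely the bookkeeping needed to make the $\mathcal L$-argument apply verbatim, and your treatment of the Poincar\'e inequality under the $\rho^2$-mean-zero constraint, the Lax--Milgram/Rellich--Kondrachov step, and the completeness transfer are all in order.
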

\begin{proposition}
Consider a orthonormal base $\left\{  \overline{u}_k\right\}_{k \in \N}$ for $L^2(D)$ with respect to the inner product $\langle \cdot, \cdot \rangle_\rho$, where for each $k\in \N$, $\overline{u}_k$ is an eigenfunction of $\mathcal{N}^{sym}$ with eigenvalue $\tau_k$. Then, for every $k \in \N$
\begin{equation}
\tau_k = \min_{\|u\|_{\rho}=1 \:, \: u \in {S^*}^{\perp}}  \overline{G}(u), 
\label{VarEigen1}
\end{equation}
where $S^*=\Span\left\{ \overline{u}_1, \dots, \overline{u}_{k-1} \right\}$. Additionally,
\begin{equation*}
\tau_{k} = \sup_{ S \in \Sigma_{m-1}} \min_{\|u\|_{\rho}=1 \:, \: u \in S^\perp}  \overline{G}(u),
\label{VarEigen2}
\end{equation*}
where $\Sigma_{m-1}$ denotes the set of $(m-1)$-dimensional subspaces of $L^2(D)$, and where $S^\perp$ represents the orthogonal complement of $S$ with respect to the inner product $\langle \cdot, \cdot \rangle_{\rho}$. 
\end{proposition}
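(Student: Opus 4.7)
The plan is to mirror the Courant--Fisher argument used for $\mathcal{L}$ in Proposition \ref{CourantFisher}, adapted to $\mathcal{N}^{sym}$ and its quadratic form $\overline{G}$. Since the proposition already assumes that $\{\overline{u}_k\}_{k \in \N}$ is an $\langle \cdot, \cdot\rangle_\rho$-orthonormal basis of $L^2(D)$ consisting of eigenfunctions of $\mathcal{N}^{sym}$, the proof reduces to two ingredients: a spectral expansion of $\overline{G}$ in terms of this basis, and an elementary dimension-count argument.

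First I would identify $\overline{G}$ as the quadratic form of $\mathcal{N}^{sym}$. Introducing the symmetric bilinear form
\[
\overline{a}(u, v) := \int_D \nabla\!\left(\frac{u}{\sqrt{\rho}}\right) \cdot \nabla\!\left(\frac{v}{\sqrt{\rho}}\right) \rho^2 \, dx
\]
on $H^1_{\sqrt{\rho}}(D) \times H^1_{\sqrt{\rho}}(D)$, the weak formulation of \eqref{PDELimitFunctionalN} gives $\overline{a}(\overline{u}_k, v) = \tau_k \langle \overline{u}_k, v\rangle_\rho$ for all $v \in H^1_{\sqrt{\rho}}(D)$, and by definition $\overline{a}(u, u) = \overline{G}(u)$. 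In particular $\overline{a}(\overline{u}_k, \overline{u}_l) = \tau_k \delta_{kl}$.

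Next I would establish the spectral identity $\overline{G}(u) = \sum_{k \in \N} \tau_k c_k^2$ for $u \in H^1_{\sqrt{\rho}}(D)$ with $c_k = \langle u, \overline{u}_k \rangle_\rho$ and $\|u\|_\rho = 1$. On finite partial sums this is immediate from the pairwise orthogonality relation above. Extending to arbitrary $u \in H^1_{\sqrt{\rho}}(D)$ requires the closedness of $\overline{a}$ together with the density of finite eigenexpansions in the form-norm $(\|\cdot\|_\rho^2 + \overline{G}(\cdot))^{1/2}$; these in turn follow from an analog of Proposition \ref{prop:spcomp} for $\mathcal{N}^{sym}$, namely that its restriction to the orthogonal complement of $\Span\{\sqrt{\rho}\}$ has a compact self-adjoint inverse. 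That compact inverse is produced by Lax--Milgram plus Rellich--Kondrachov after the substitution $w = u/\sqrt{\rho}$, which turns the weak eigenvalue problem into one with weight $\rho^2$ on both sides, and is coercive on $\{w \in H^1(D) : \int_D w \rho^2\, dx = 0\}$ by Poincare's inequality under hypothesis \eqref{DensityBound}. For $u \notin H^1_{\sqrt{\rho}}(D)$ we have $\overline{G}(u) = \infty$ by definition, so the minimization problem automatically restricts to the form domain.

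With the spectral identity in hand, both characterizations are immediate. For \eqref{VarEigen1}: if $u \in (S^*)^\perp$ with $\|u\|_\rho = 1$ then $c_j = 0$ for $j < k$, whence $\overline{G}(u) = \sum_{j \geq k} \tau_j c_j^2 \geq \tau_k \sum_{j \geq k} c_j^2 = \tau_k$, with equality attained by $u = \overline{u}_k$. For the sup-min formula, the choice $S = S^*$ already yields the lower bound $\tau_k$; conversely, for any $(k-1)$-dimensional subspace $S \subset L^2(D)$, the $k$-dimensional space $\Span\{\overline{u}_1, \dots, \overline{u}_k\}$ must intersect $S^\perp$ nontrivially by a dimension count, so we may pick a unit element $u = \sum_{j=1}^k c_j \overline{u}_j$ of the intersection and obtain $\overline{G}(u) = \sum_{j=1}^k \tau_j c_j^2 \leq \tau_k$. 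The main obstacle in this plan is the spectral expansion step, which hinges on the functional-analytic setup (compact resolvent, closed form, dense form domain) for $\mathcal{N}^{sym}$; once this analog of Proposition \ref{prop:spcomp} is in place, the remainder of the argument is a purely algebraic expansion together with a dimension-count.
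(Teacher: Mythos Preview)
Your proposal is correct and follows essentially the same approach the paper indicates: the paper does not spell out a proof for this proposition but simply remarks that ``following the same ideas used when considering the spectrum of $\mathcal{L}$, we can establish the following analogous results,'' and for the $\mathcal{L}$ version (Proposition~\ref{CourantFisher}) it merely cites Chapter~8.3 of \cite{ButtazzoBook}. Your plan of reducing to the $\mathcal{L}$ case via the substitution $w = u/\sqrt{\rho}$, setting up the compact-resolvent framework analogous to Proposition~\ref{prop:spcomp}, and then running the standard spectral-expansion plus dimension-count argument is exactly what that remark is pointing to.
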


Finally, we consider the spectrum of $\mathcal{N}^{rw}$.
We say that $\tau \in \R$ is an \emph{eigenvalue} of the operator $\mathcal{N}^{rw}$, if there exists a nontrivial $u \in H^1(D)$ for which
\begin{equation} 
\int_{D} \nabla u \cdot \nabla v\rho^2(x) dx = \tau \int_{D}u v \rho^2(x)dx\:, \: \forall v \in H^1(D).  
\label{DefEigenfuncWeak}
\end{equation}
The function $u$ is then called an eigenfunction of $\mathcal{N}^{rw}$ with eigenvalue $\tau$.
\label{DefEigenfuncWeak1}
From the definition, it follows that $\tau$ is an eigenvalue of $\mathcal{N}^{rw}$ with eigenfunction $u$ if and only if $\tau$ is an eigenvalue of $\mathcal{N}^{sym}$ with eigenvector $w := \sqrt{\rho}u$. This is analogous to \eqref{ConditionNsymNrw} in the discrete case.

\section{Convergence of the spectra of unnormalized graph Laplacians}
\label{ProofsMainTheorems}
We start by establishing Theorem \ref{DiscreteGamma}. 

\begin{proof}[Proof of Theorem \ref{DiscreteGamma}]
As done in Section 5 in \cite{GTS} and due to the assumptions $\textbf{(K1)}-\textbf{(K3)}$ on $\bm{\eta}$, we can reduce the problem to that of showing the result for the kernel $\bm{\eta}$ defined by
$$ \bm{\eta}(t):=
\begin{cases}
 1,  & \te{if } t\in[0,1], \medskip \\
 0,  & \te{if } t>1.
\end{cases}
$$
We use the sequence of transportation maps $\left\{ T_n\right\}_{n \in \N}$ from Proposition \ref{thm:InifinityTransportEstimate}.  Let $\omega \in \Omega$ be such that
\eqref{InifinityTransportEstimate d=2} and \eqref{InifinityTransportEstimate d>2} hold in cases  $d=2$ and $d \geq 3$ respectively.  By Proposition \ref{thm:InifinityTransportEstimate} the complement in $\Omega$ of such $\omega$'s is contained in a set of probability zero. The key idea in the proof is that the estimates of Proposition \ref{thm:InifinityTransportEstimate} imply that the transportation happens on a length scale which is small compared to $\veps_n$. By taking a kernel with slightly smaller radius than $\veps_n$ we can then obtain a lower bound, and by taking a slightly larger radius a matching upper bound on the functional $G_{n,\veps_n}$.

\textbf{Liminf inequality:} Assume that $u_n \overset{TL^1}{\longrightarrow} u$ as $n \rightarrow \infty$. Since  $T_{n\sharp} \nu= \nu_n$, using the change of variables \eqref{chofvar} it follows that
\begin{equation}
G_{n , \veps_n}(u_n) = \frac{1}{\veps_n^2} \int_{D \times D} \eta_{\veps_n}\left(T_n(x) - T_n(y) \right)( u_n\circ T_n(x)  - u_n \circ T_n(y) )^2\rho(x)\rho(y) dx dy.
\label{RepTvGraph}
\end{equation}  
Note that for $(x,y) \in D \times D $ 
\begin{equation}
\left| T_n(x) - T_n(y) \right| > \veps_n \Rightarrow |x-y| > \veps_n - 2\|Id - T_n\|_\infty.
\label{implication}
\end{equation}

Thanks to the assumptions on $\left\{  \veps_n \right\}_{n \in \N}$ 
(\eqref{InifinityTransportEstimate d=2} and \eqref{InifinityTransportEstimate d>2} in cases  $d=2$ and $d \geq 3$ respectively), for large enough $n \in \N$:
\begin{equation}
\tilde{\veps}_n: =  \veps_n - 2\|Id - T_n\|_\infty >0.
\label{tildevepsn}
\end{equation}
By \eqref{implication}, and our choice of kernel $\bm{\eta}$, for large enough $n$ and for every $(x,y) \in D\times D$, we obtain 
\begin{equation*}
\bm{\eta}\left(\frac{|x-y|}{ \tilde{\veps}_n} \right) \leq \bm{\eta}\left( \frac{|T_n(x)- T_n(y)|}{\veps_n} \right).
\end{equation*}
We now consider $\tilde u_n = u_n \circ T_n$. Thanks to the previous inequality and \eqref{RepTvGraph}, for large enough $n$ 
\begin{align*}
G_{n , \veps_n}(u_n) \geq & \frac{1}{\veps_n^{d+1}} \int_{D \times D}\bm{ \eta}\left(\frac{|x-y|}{\tilde{\veps}_n}\right) ( \tilde u_n(x) - \tilde u_n(y) )^2\rho(x)\rho(y) dxdy
\\ = &  \left(\frac{\tilde{\veps}_n}{\veps_n} \right)^{d+2} G_{\tilde{\veps}_n}(\tilde u_n).
\end{align*}
Note that $\frac{\tilde{\veps}_n}{\veps_n} \rightarrow 1$ as $n \rightarrow \infty$ and that $u_n \overset{{TL^1}}{\longrightarrow} u$ by definition implies  $\tilde u_n \overset{{L^1(D,\rho)}}{\longrightarrow} u$ as $n \rightarrow \infty$. We deduce from the liminf inequality of Proposition \ref{NonlocalDirichlet} that $  \liminf_{n \rightarrow \infty} G_{\tilde{\veps}_n}(\tilde u_n) \geq \sigma_\eta G(u)  $ and hence:
\begin{equation*}
\liminf_{n \rightarrow \infty} G_{n , \veps_n}(u_n) \geq \sigma_\eta G(u).
\end{equation*}

\textbf{Limsup inequality:}
By using a diagonal argument it is enough to establish the limsup inequality for a dense subset of $L^2(D)$ and in particular we consider the set of Lipschitz continuous functions $u : D \rightarrow \R $. That is, we want to show that if $u: D \rightarrow \R$ is a Lipschitz continuous function, then there exists a sequence of functions $\left\{ u_n \right\}_{n \in \N}$, where $u_n \in L^2(\nu_n)$ and
$$u_n \overset{TL^2}{\longrightarrow} u   \quad \text{ as } n \rightarrow \infty , \quad \limsup_{n \rightarrow \infty}G_{n, \veps_n}(u_n) \leq \sigma_\eta G(u).  $$

We define $u_n$ to be the restriction of $u$ to the first $n$ data points $\x_1, \dots , \x_n$. We note that this operation is well defined due to the fact that $u$ is in particular continuous. It is straightforward to show that given that $u$ is Lipschitz we have $u_n \overset{{TL^2}}{\longrightarrow} u$.

Now, consider $\tilde{\veps}_n := \veps_n + 2\|Id - T_n\|_\infty$ and let $\tilde u_n = u \circ T_n$. The choice of kernel $\bm{\eta}$ implies that for every $(x,y)  \in D\times D$ 
\begin{equation*}
\bm{\eta}\left(\frac{|T_n(x)-T_n(y)|}{ \veps_n} \right) \leq \bm{\eta}\left( \frac{|x- y|}{ \tilde{\veps}_n} \right).
\end{equation*}
It follows that for all $n\in \N$ 
\begin{align}
\begin{split}
\frac{1}{\tilde{\veps}_n^{d+2}} \int_{D \times D} \bm{ \eta} & \left(  \frac{|T_n(x)- T_n(y)|}{\veps_n} \right)  ( \tilde u_n(x) - \tilde u_n(y) )^2 \rho(x)\rho(y) dxdy
\\ & \leq \frac{1}{\tilde{\veps}_n^2}\int_{D \times D} \eta_{\tilde{\veps}_n}\left(x-y \right) (\tilde u_n (x) - \tilde u_n(y)  )^2 \rho(x)\rho(y) dxdy.
\end{split}
\label{Ineq111}
\end{align}
Now let $A_n$ and $B_n$ be given by
$$  A_n := \frac{1}{\tilde{\veps}_n^2}    \int_{D \times D} \eta_{\tilde{\veps}_n}(x-y) (u(x) - u(y))^2  \rho(x)\rho(y) dxdy   $$
$$ B_n:= \frac{1}{\tilde{\veps}_n^2}    \int_{D \times D} \eta_{\tilde{\veps}_n}(x-y) (\tilde{u}_n(x) - \tilde{u}_n(y))^2  \rho(x)\rho(y) dxdy. $$

Then, 
\begin{align}
\begin{split}
 \left(  \sqrt{A_n}- \sqrt{B_n} \right)^2 \leq 
\frac{1}{\tilde{\veps}_n^2} &   \int_{D \times D} \eta_{\tilde{\veps}_n}(x-y)\left(u(x) - \tilde{u}_n(x) + \tilde{u}_n(y) - u(y) \right)^2 \rho(x)\rho(y) dxdy 
\\ & \leq \frac{4}{\tilde{\veps}_n^2}  \int_{D \times D} \eta_{\tilde{\veps}_n}(x-y)(u(x) - \tilde{u}_n(x))^2 \rho(x) \rho(y)dxdy
\\ & \leq  \frac{4C\Lip(u)^2\|\rho\|_{L^\infty(D)}^2 \|Id-T_n\|_{\infty}^2}{\tilde{\veps}_n^2} ,
\end{split}
\label{AssympLips}
\end{align}
where the first inequality follows using Minkowski's inequality, and where $C= \int_{\mathbb{R}^d} \eta(h) dh$. The last term of the previous expression goes to $0$ as $n \rightarrow \infty$, yielding 
$$ \lim_{n \rightarrow \infty } | \sqrt{A_n} - \sqrt{B_n}| =0. $$
 On the other hand, by \eqref{PointwiseNonlocal} it follows that $A_n$ is bounded on $n$ and in particular it follows that
\begin{equation}
  \lim_{n \rightarrow \infty}| A_n - B_n| =0.
   \label{auxDiscreteGamma}
 \end{equation}
We conclude that
\begin{align*}
\limsup_{n \rightarrow \infty} G_{n , \veps_n}(u_n) 
= & \limsup_{n \rightarrow \infty}\frac{1}{\tilde{\veps}_{n}^{d+2}} \int_{D \times D}\bm{ \eta} \left(  \frac{|T_n(x)- T_n(y)|}{\veps_n} \right) ( \tilde{u}_n(x) - \tilde{u}_n(y) )^2 \rho(x)\rho(y)dxdy
\\ \leq & \limsup_{n \rightarrow \infty} \frac{1}{\tilde{\veps}_n^2} \int_{D \times D} \eta_{\tilde{\veps}_n}(x-y) ( \tilde{u}_n(x) - \tilde{u}_n(y) )^2\rho(x)\rho(y)dxdy
\\ = & \limsup_{n \rightarrow \infty} G_{\tilde{\veps}_n}(u) = \sigma_\eta G(u),
\end{align*}
where the first equality is obtained from the fact that $\frac{\veps_n}{\tilde{\veps}_n} \rightarrow 1$ as $n \rightarrow \infty$, the first inequality is obtained from \eqref{Ineq111}, the second equality is obtained from \eqref{auxDiscreteGamma} and the last equality is obtained from \eqref{PointwiseNonlocal}.

\textbf{Compactness:} Finally, to see that the compactness statement holds suppose that $\left\{ u_n \right\}_{n \in \N}$ is a sequence with $u_n \in L^2(\nu_n)$ and such that
$$ \sup_{n \in \N} \| u_n\|_{L^2(\nu_n)}  < \infty, \quad \sup_{n \in \N} G_{n, \veps_n}(u_n) < \infty. $$
Note that in particular $ \sup_{n \in \N} \| u_n\circ T_n\|_{L^2(\nu)} < \infty $. We want to show that 
$$\sup_{n \in \N} G_{\veps_n}(u_n \circ T_n) < \infty  $$

To see this, note that for large enough $n$, we can set  $\tilde{\veps}_n := \veps_n - 2\| Id - T_n\|_\infty$ as in \eqref{tildevepsn}. Thus, for large enough $n$:
\begin{align*}
\frac{1}{\veps_n^{d+2}} \int_{D\times D} & \bm{\eta}\left(\frac{|z-y|}{\tilde{\veps}_n} \right) ( u_n \circ T_n (z) - u_n \circ T_n(y)  )^2 \rho(z) \rho(y)dzdy \\
& \leq  \frac{1}{\veps_n^{d+2}} \int_{D\times D}  \bm{\eta}\left(\frac{|T_n(z)-T_n(y)|}{\tilde{\veps}_n} \right) ( u_n \circ T_n (z) - u_n \circ T_n(y)  )^2 \rho(z) \rho(y) dzdy
\\&= G_{n , \veps_n} (u_n).
\end{align*}
Thus
\begin{equation*}
\sup_{n \in \N} \frac{1}{\veps_n^{d+2}} \int_{D\times D}  \bm{\eta}\left(\frac{|z-y|}{\tilde{\veps}_n} \right) (u_n \circ T_n (z) - u_n \circ T_n(y)  )^2\rho(z)\rho(y) dzdy < \infty.
\end{equation*} 
Finally noting that $\frac{\tilde{\veps}_n}{\veps_n} \rightarrow 1$ as $n \rightarrow \infty$ we deduce that:
\begin{equation*}
\sup_{n \in \N}G_{\veps_n}(u_n \circ T_n)< \infty.
\end{equation*} 
By Proposition \ref{NonlocalDirichlet}  we conclude that $\left\{ u_n \circ T_n \right\}_{n \in \N}$ is relatively compact in $L^2(\nu)$ and hence $\left\{ u_n \right\}_{n \in \N}$ is relatively compact in $TL^2$.
\end{proof}

Now we prove Theorem \ref{ConvergenceSpectrumTheorem}. 
\subsection{Convergence of Eigenvalues}
\label{SecConvEigenvalues}

First of all note that because $\mathcal{L}_{n, \veps_n}$ is self-adjoint with respect to the Euclidean inner product in $\R^n$, in particular it is also self-adjoint with respect to the inner product $\langle\cdot, \cdot\rangle_{\nu_n}$ and  furthermore, it is positive semi-definite. In particular, we can use the Courant-Fisher maxmini principle to write the eigenvalues $0=\lambda_1^{(n)}\leq \dots \leq \lambda_{n}^{(n)}$ of $\mathcal{L}_{n, \veps_n}$ as
\begin{equation*}
\lambda_{k}^{(n)} = \sup_{ S \in \Sigma^{(n)}_{k-1}} \min_{\|u\|_{\nu_n}=1 \:, \: u \in S^\perp}  \langle  \mathcal{L}_{n, \veps_n} u , u \rangle_{\nu_n} ,
\end{equation*}
where $\Sigma^{(n)}_{k-1}$ denotes the set of subspaces of $\R^n$ of dimension $k-1$. On the other hand, for any $u_n \in L^2(\nu_n)$, from \eqref{LaplacianAndEnergy} it follows that
\begin{equation}
G_{n,\veps_n}(u_n) = \frac{2}{n \veps_n^2} \langle \mathcal{L}_{n,\veps_n} u_n , u_n \rangle_{\nu_n}
\label{LaplacianAndEnergyProofs}
\end{equation}
Therefore, 
\begin{equation*}
\frac{2\lambda_{k}^{(n)}}{ n \veps_n^2} = \sup_{ S \in \Sigma^{(n)}_{k-1}} \min_{\|u\|_{\nu_n}=1 \:, \: u \in S^\perp} G_{n , \veps_n}(u).
\label{CourantFisherDiscreteUnnormalized}
\end{equation*}
%On the other hand, the eigenvalues $\lambda_1 \leq \lambda_2 \leq \dots$ of $\mathcal{L}$, can be written as 
%\begin{equation}
%\lambda_k= \sup_{ S \in \Sigma_{k-1}} \min_{\|u\|_{\rho}=1 \:, \: u \in S^\perp} G(u),
%\label{CourantFisherContinuumUnnormalized}
%\end{equation}
%where $\Sigma_{k-1}$ denotes the set of subspaces of $L^2(D)$ with dimension $k-1$, and $S^\perp$ denotes the orthogonal complement of $S$ with respect to the inner product $\langle\cdot, \cdot \rangle_{\rho}$ in $L^2(D)$.
%\begin{theorem}
%For every $k \in \N$ we have:
%\begin{equation*}
%\lim_{n \rightarrow \infty}    \frac{2 \lambda_{k}^{(n)} }{ n \veps_n^{2} }= \sigma_\eta \lambda_k
%\label{EigenvalueConvergence}
%\end{equation*}
%\end{theorem}
Let us first prove the first statement from Theorem \ref{ConvergenceSpectrumTheorem}.  
The proof is by induction on $k$. For $k=1$, we know that $\lambda_1^{(n)}=0$ for every $n$. Also, $\lambda_1=0$, so trivially \eqref{EigenvalueConvergence} is true when $k=1$.
Now, suppose that \eqref{EigenvalueConvergence} is true for  $i=1 , \dots ,k -1$. We want to prove that the result holds for $k$.
 
\textbf{Step 1: } In this first step we prove that $\sigma_\eta \lambda_k \leq \liminf_{n \rightarrow \infty} \frac{2\lambda_k^{(n)}}{n \veps_n^{2}}$. Let $S \in \Sigma_{k-1}$, we let $\left\{ u_1 , \dots , u_{k-1} \right\}$ be an orthonormal base for $S$.  Then, for every $i=1,\dots, k-1$, there exists a sequence $\left\{ u_i^{n} \right\}_{n \in \N} $  (with $u_i^n \in L^2(\nu_{n})$) such that $u_i^{n} \overset{{TL^2}}{\longrightarrow} u_i$ as $n \rightarrow \infty$. The existence of such sequence follows from the limsup inequality of Theorem \ref{DiscreteGamma}. 
Proposition \ref{ContinuityInnerProdTL2} implies that for all $i=1,\dots, k-1$
\begin{equation*}
\lim_{n \rightarrow \infty}\|u_i^{n}\|_{\nu_n}= \|u_i\|_{\rho}=1, \:
\label{auxConvergenceInnerProds1}
\end{equation*}
and that for $i \not = j$
\begin{equation}
\lim_{n \rightarrow \infty} \langle u_i^{n}, u_j^{n}  \rangle_{\nu_n} = \langle u_i , u_j\rangle_\rho=0.
\label{auxConvergenceInnerProds2}
\end{equation}
Thus,  for large enough $n$, the space generated by $\left\{ u_1^n , \dots, u_{k-1}^n  \right\}$
is $k-1$ dimensional. We can use the Gram-Schmidt orthogonalization process, to obtain an orthonormal base $\left\{\tilde{u}_1^n , \dots, \tilde{u}_{k-1}^n \right\}$. 
That is, we define
$ \tilde{u}_1^{n}:= {u_1^{n} }/{\|u_1^{n} \|_{\nu_n}}$,
and recursively 
$\tilde{v}_i^{n} := u_i^{n}- \sum_{j=1}^{i-1}\langle u_i^{n}, \tilde{u}_j^{n} \rangle_{\nu_n} \tilde{u}_{j}^{n}$, and $\tilde{u}_{i}^{n} := { \tilde{v}_i^{n} }/{\|\tilde{v}_i^{n} \|_{\nu_n}}$ for $i=2, \dots, k-1$.

%That is, we define:
%\begin{equation*}
% \tilde{u}_1^{n}:= \frac{u_1^{n} }{\|u_1^{n} \|_{\nu_n}},
%\end{equation*}
%and recursively
%\begin{align*}
%\tilde{v}_i^{n} &:= u_i^{n}- \sum_{j=1}^{i-1}\langle u_i^{n}, \tilde{u}_j^{n} \rangle_{\nu_n} \tilde{u}_{j}^{n},
%\\
%\tilde{u}_{i}^{n}& := \frac{ \tilde{v}_i^{n} }{\|\tilde{v}_i^{n} \|_{\nu_n}}.
%\end{align*}
It follows from \eqref{auxConvergenceInnerProds2} and Proposition \ref{ContinuityInnerProdTL2} that $ \tilde{u}_i^{n } \overset{{TL^2}}{\longrightarrow} u_i $ as $n \rightarrow \infty$ for every $i=1 , \dots , k-1$. Let $S_{n}:= \Span \left\{  \tilde{u}^{n}_{1}, \dots, \tilde{u}^{n}_{k-1} \right\} $. We claim that 
\begin{equation}
 \liminf_{n \rightarrow \infty } \frac{ 2\lambda_k^{(n)}}{n \veps_n^2} \geq \min_{\|u\|_\rho=1, \: u \in S^{\perp} } \sigma_\eta G(u)
\label{ClaimEigenvalues}
\end{equation}
First, note that if 
$$\liminf_{n \rightarrow \infty} \min_{\|u\|_{\nu_n}=1,\: u \in {S_n}^{\perp}} G_{n , \veps_n}(u) = \infty,$$ 
then in particular
$$ \liminf_{n \rightarrow \infty }\frac{ 2\lambda_k^{(n)}}{n \veps_n^2} \geq  \liminf_{n \rightarrow \infty} \min_{\|u\|_{\nu_n}=1,\: u \in {S_n}^{\perp}} G_{n , \veps_n}(u) = \infty, $$ 
and in that case \eqref{ClaimEigenvalues} follows trivially. Let us now assume that  $\liminf_{n \rightarrow \infty} \min_{\|u\|_{\nu_n}=1,\: u \in {S_n}^{\perp}} G_{n , \veps_n}(u) < \infty$. Working on a subsequence that we do not relabel, we can assume without the loss of generality that the liminf is actually a limit, that is,
 \begin{equation*}
\lim_{n \rightarrow \infty} \min_{\|u\|_{\nu_{n}}=1,\: u \in {S_{n}}^{\perp}} G_{n , \veps_n}(u) = \liminf_{n \rightarrow \infty} \min_{\|u\|_{\nu_n}=1,\: u \in {S_n}^{\perp}} G_{n , \veps_n}(u)  < \infty.
\end{equation*}
Consider now a sequence $\left\{ v_n \right\}_{n \in \N}$ with $\|v_n\|_{\nu_n}=1$ and $v_n \in {S_{n}}^{\perp}$ such that
\begin{equation*}
 \lim_{n \rightarrow \infty}  G_{n , \veps_n}(v_n)= \lim_{n \rightarrow \infty} \min_{\|u\|_{\nu_n}=1,\: u \in {S_{n}}^{\perp}} G_{n , \veps_n}(u)  < \infty.
\end{equation*}
Using the compactness from Theorem \ref{DiscreteGamma} and working on a subsequence that we do not relabel, we may assume that
\begin{equation}
v_n\overset{{TL^2}}{\longrightarrow} v, \: \text{as} \: n \rightarrow \infty,
\end{equation} 
for some $v \in L^2(D)$. From Proposition \ref{ContinuityInnerProdTL2}, $\|v\|_\rho = \lim_{n \rightarrow \infty} \|v_n\|_{\nu_{n}}=1$ and $\langle v , u_i \rangle_\rho= \lim_{n \rightarrow \infty} \langle v_n, \tilde{u}_i^{n}  \rangle_{\nu_n} =0$ for every $i=1, \dots , k-1$. In particular, $\|v\|_{\rho}=1$ and $v \in S^{\perp}$. Moreover, given that $v_n \overset{{TL^2}}{\longrightarrow} v$, it follows from the liminf inequality of Theorem \ref{DiscreteGamma} that
\begin{align*}
\min_{\|u\|_\rho=1, \: u \in S^{\perp} } \sigma_\eta G(u) &\leq \sigma_\eta G(v) 
\\
&\leq \liminf_{n \rightarrow \infty} G_{n, \veps_{n}}(v_n ) 
\\
&= \lim_{n \rightarrow \infty} \min_{\|u\|=1, \: u \in {S_{n}}^{\perp}} G_{n , \veps_n}(u)
\\ &\leq  \liminf_{n \rightarrow \infty} \sup_{\tilde{S} \in \Sigma_{k-1}^{(n)}}  \min_{\|u\|=1, \: u \in \tilde{S}^{\perp}} G_{n, \veps_n}(u)
\\& = \liminf_{n \rightarrow \infty} \frac{2\lambda_k^{(n)}}{n\veps_n^2} .
\end{align*}
Thus showing \eqref{ClaimEigenvalues} in all cases. Finally, since $S \in \Sigma_{k-1}$ was arbitrary, taking the supremum over all $S \in \Sigma_{k-1}$ and using the Courant-Fisher maxmini principle we deduce that
\begin{equation*}
\sigma_\eta \lambda_k \leq \liminf_{n \rightarrow \infty} \frac{2\lambda_k^{(n)}}{n \veps_n^2}.
\end{equation*}
\textbf{Step 2:} Now we prove that $\limsup_{n \rightarrow \infty} \frac{2 \lambda_k^{(n)}}{n \veps_n^2} \leq \lambda_k$. Consider $\left\{ u_1^{n}, \dots , u_{k-1}^{n} \right\}$ an orthonormal set (with respect to $\langle\cdot, \cdot \rangle_{\nu_n}$) with $u_i^{n}$ an eigenvector of $\mathcal{L}_{n , \veps_n}$ associated to $\lambda_i^{(n)}$ (this is possible because $\mathcal{L}_{n , \veps_n}$ is self-adjoint with respect to $\langle\cdot, \cdot \rangle_{\nu_n}$). Consider then $S^*_n:= \Span\left\{ u_1^{n} , \dots , u_{k-1}^{n}  \right\}$. We have:
\begin{equation*}
\frac{2\lambda_{k}^{(n)}}{n \veps_n^2} = \sup_{S \in \Sigma_{k-1}^{(n)}} \min_{\|u\|_{\nu_n}=1, \: u \in S^\perp} G_{n , \veps_n}(u) = \min_{\|u\|_{\nu_n}=1, \: u \in {S_n^*}^\perp} G_{n , \veps_n}(u).
\end{equation*}
Working along a subsequence that we do not relabel, we can assume without the loss of generality that $\limsup_{n \rightarrow \infty} \frac{2\lambda_k^{(n)}}{n \veps_n^2} = \lim_{n \rightarrow \infty} \frac{2\lambda_k^{(n)}}{n \veps_n^2}$.  Note that by the induction hypothesis, for every $i=1 , \dots , k-1$ we have:
\begin{equation}
\lim_{n \rightarrow \infty} G_{n , \veps_n}(u_i^{n}) = \lim_{n \rightarrow \infty} \frac{2\lambda_{i}^{(n)}}{n \veps_n^2} = \sigma_\eta \lambda_i < \infty.
\end{equation}
Thanks to this, we can use the compactness from Theorem \ref{DiscreteGamma} to conclude that for every $i =1 , \dots, k-1$ (working with a subsequence  that we do not relabel) :
\begin{equation*}
u_i^{n} \overset{{TL^2}}{\longrightarrow} u_i, \: \text{as} \: n \rightarrow \infty,
\end{equation*}
for some $u_i \in L^2(D)$. From Proposition \ref{ContinuityInnerProdTL2}, $\langle u_i , u_j \rangle_\rho= \lim_{n \rightarrow \infty} \langle u_i^{n}, u_j^{n}  \rangle_{\nu_n}=0$ for $i \not = j$ and $ \|u_i\|_\rho=  \lim_{n \rightarrow \infty} \|u_i^{n}\|_{\nu_n} =1 $ for every $i$. Take $S:=\Span \left\{u_1, \dots , u_{k-1} \right\}$, note that in particular $S \in \Sigma_{k-1}$. Also, take $v \in S^{\perp}$ with $\|v\|_\rho=1$ and such that:
\begin{equation}
\sigma_\eta G(v) = \min_{\|u\|_\rho=1, \: u \in S^\perp} \sigma_\eta G(u) \leq \sigma_\eta \lambda_k.
\label{Aux0}
\end{equation}
%Fix $\epsilon>0$, we can find $v \in S^{\perp}$ with $\|v\|=1$ such that:
%\begin{align*}
%\langle Lv , v \rangle \leq \min_{\|u\|=1, \: u \in S^\perp} \langle Lu, u \rangle + \epsilon
%\\
%\\\leq \lambda_m + \epsilon,
%\end{align*}
The last inequality in the previous expression holds thanks to the Courant-Fisher maxmini principle. By the limsup inequality from Theorem \ref{DiscreteGamma}, we can find $\left\{ v_n \right\}_{n\in \N}$ with $v_n \overset{{TL^2}}{\longrightarrow} v$ as $ n \rightarrow \infty$ and such that $ \limsup_{n \rightarrow \infty}  G_{n , \veps_n}(v_n) \leq \sigma_\eta G(v)$. Let $\tilde{v}_n$ be given by 
 \begin{equation*}
 \tilde{v}_n := v_n  - \sum_{i=1}^{k-1}\langle v_n , u_{i}^{n} \rangle_{\nu_n}u_{i}^{n}.
 \end{equation*}
Note that $\tilde{v}_n \in {S^*_{n}}^{\perp}$. Also note that from Proposition \ref{ContinuityInnerProdTL2}, we deduce that $\langle v_n , u_i^{n} \rangle_{\nu_n} \rightarrow 0$ as $n \rightarrow \infty$ for all $i=1, \dots, k-1$ and thus $\tilde{v}_n \overset{{TL^2}}{\longrightarrow} v$ as $n \rightarrow \infty$. Moreover,
\begin{align}
\begin{split}
G_{n , \veps_n}(\tilde{v}_n)  & =  \frac{2}{n \veps_n^2} \langle \mathcal{L}_{n, \veps_n} \tilde{v}_n, \tilde{v}_n  \rangle \\
&= \frac{2}{n \veps_n^2} \langle \mathcal{L}_{n, \veps_n} v_n, v_n  \rangle - \frac{2}{n \veps_n^2} \sum_{i=1}^{k-1} \langle v_n , u_i^n \rangle_{
\nu_n} \langle \mathcal{L}_{n, \veps_n} v_n , u_i^n \rangle_{\nu_n}-  \frac{2}{n \veps_n^2} \sum_{i=1}^{k-1} \langle v_n , u_i^n \rangle_{
\nu_n} \langle \mathcal{L}_{n, \veps_n} u_i^n , \tilde{v}_n \rangle_{\nu_n}
\\
&= G_{n, \veps_n}(v_n) -  \sum_{i=1}^{k-1} \frac{ 2\lambda_{i}^{(n)}}{n \veps_n^2} \langle v_n , u_i^{n} \rangle_{\nu_n}^2-  \frac{2}{n \veps_n^2} \sum_{i=1}^{k-1} \lambda_i^{(n)}\langle v_n , u_i^n \rangle_{
\nu_n} \langle  u_i^n , \tilde{v}_n \rangle_{\nu_n}
\\
&= G_{n, \veps_n}(v_n) -  \sum_{i=1}^{k-1} \frac{ 2\lambda_{i}^{(n)}}{n \veps_n^2} \langle v_n , u_i^{n} \rangle_{\nu_n}^2
\\
& \leq G_{n, \veps_n}(v_n).
%G_{n , \veps_n}(v_n) - \sum_{i=1}^{k-1}\frac{ 2\lambda_{i}^{(n)}}{n \veps_n^2} \langle v_n , u_i^{n} \rangle_{\nu_n}^2 \leq G_{n , \veps_n}(v_n) .
\end{split}
\end{align}
Therefore,
\begin{equation}
\limsup_{n \rightarrow \infty} G_{n , \veps_n}( \tilde{v}_n )  \leq \limsup_{n \rightarrow \infty}  G_{n , \veps_n}(v_n)  \leq \sigma_\eta G(v).
\label{Aux1}
\end{equation}
Since $\tilde{v}_{n}\overset{{TL^2}}{\longrightarrow} v$ and $\|v\|_\rho=1$, once again from Proposition \ref{ContinuityInnerProdTL2}  we obtain $\lim_{n \rightarrow \infty}\|\tilde{v}_n\|_{\nu_n}=1 $. In particular we can set $\tilde{u}_n:= \frac{\tilde{v}_n}{\| \tilde{v}_n\|_{\nu_n}}$ and use \eqref{Aux1} together with \eqref{Aux0} to conclude that:
\begin{align*}
 \lim_{n \rightarrow \infty}\frac{2 \lambda_{k}^{(n)}}{n\veps_n^2} &= \lim_{n \rightarrow \infty} \min_{\|u\|_{\nu_n}=1, \: u \in {S^*_{n}}^{\perp}  }  
  G_{n, \veps_n}(u)
\\&\leq  \limsup_{n \rightarrow \infty} G_{n , \veps_n} (\tilde{u}_n)
\\ & = \limsup_{n \rightarrow \infty} G_{n , \veps_n} (\tilde{v}_n) 
\\& \leq \sigma_\eta G(v)
\\&\leq \sigma_\eta \lambda_k,
\end{align*}
which implies the desired result.
%
%From now on we denote by $\lambda_1 < \lambda_2 < \lambda_3 < ...$ the \textbf{different} eigenvalues of $L$, and similarly $\lambda_1^{(n)} < \lambda_2^{(n)}< \dots$ the \textbf{different} eigenvalues of $L^{(n)}$. Denote by $E_k$ the eigenspace associated to the eigenvalue $\lambda_k$ of $L$, and similarly $E_k^{(n)}$ the eigenspace of $L^{(n)}$ associated to $\lambda_{k}^{(n)}$. We also consider the projections $\Proj_{k}: L^2(\mu) \rightarrow L^2(\mu)$ on $E_k$  and $\Proj_{k}^{(n)}: L^2(\mu_n) \rightarrow L^2(\mu_n)$ the projection on $E_k^{(n)}$.
%\paragraph{}
% From the previous theorem we have the following corollary:
%
%\begin{corollary}
%For every $m \in \N$ we have:
%\begin{enumerate}
%\item $\lim_{n \rightarrow \infty} \lambda_m^{(n)} = \lambda_m$
%\item $\lim_{n \rightarrow \infty} \dim \left(E_m^{(n)} \right) = \dim(E_m) $
%\end{enumerate}
%\label{ConvergenceEigenvalues}
%\end{corollary}
\subsection{Convergence of Eigenprojections}
We prove the second and third part of Theorem \ref{ConvergenceSpectrumTheorem}. We recall that the numbers $\bar{\lambda}_1 < \bar{
\lambda}_2 < \dots$
denote the distinct eigenvalues of $\mathcal{L}_{n , \veps_n}$. For a given $k \in \N$, we recall that $s(k)$ is the multiplicity of the eigenvalue $\bar{\lambda}_k$ and that $\hat{k} \in \N$ is such that $$\bar{\lambda}_k =  \lambda_{\hat{k}+1} = \dots = \lambda_{\hat{k}+s(k)}.$$
We let $E_k$ be the subspace of $L^2(D)$ of eigenfunctions of $\mathcal{L}$ associated to $\overline{\lambda}_k$, and for large $n$  we let $E_k^{(n)}$ be the subspace of $\R^n$ generated by all the eigenvectors of $\mathcal{L}_{n , \veps_n}$ corresponding to  all eigenvalues listed in $\lambda_{\hat{k}+1}^{(n)}, \dots , \lambda_{\hat{k}+s(k)}^{(n)}$. We remark that by the convergence of the eigenvalues proved in Subsection \ref{SecConvEigenvalues} we have 
\begin{equation}
 \lim_{n \rightarrow \infty} \dim(E_k^{(n)} ) =\dim(E_k)= s(k). 
 \label{convergenceDimension}
\end{equation}
%
%To avoid introducing new notation, we use the convention that the sequence $\lambda_1^{(n)} < \lambda_{2}^{(n)}< \dots$ represents the different eigenvalues of $\mathcal{L}_{n , \veps_n}$  and that $E_1^{(n)} , E_2^{(n)}, \dots$ represent the respective associated eigenspaces. Likewise, the sequence $\lambda_1 < \lambda_2 < \dots$ represents the different eigenvalues of $\mathcal{L}$ and $E_1 , E_2, \dots$ represent the corresponding associated eigenspaces. 
%
%
%
%
%\begin{remark}
%With the previous convention, from the first part of Theorem \ref{ConvergenceSpectrumTheorem} we conclude that for every $k \in \N$
%\begin{equation*}
%\lim_{n \rightarrow \infty}    \frac{2 \lambda_{k}^{(n)} }{ n \veps_n^{2} }= \sigma_\eta \lambda_k.
%\end{equation*}
%and
%\begin{equation*}
%\lim_{n \rightarrow \infty} \dim(E_k^{(n)}) = \dim(E_k).
%\end{equation*}
%\label{RemarkConvergenceDifferentEigenvalues}
%\end{remark}
%
We prove simultaneously the second and third statement of Theorem \ref{ConvergenceSpectrumTheorem}. The proof is by induction on $k$. 

\emph{Base Case:} Let $k=1$. Suppose that $u_n \overset{{TL^2}}{\longrightarrow} u$. We need to show that $\Proj_1^{(n)}(u_n)  \overset{{TL^2}}{\longrightarrow} \Proj_1(u)$. Now, note that since the domain $D$ is connected, the multiplicity of $\overline{\lambda}_1$ is equal to one. In particular, $\Proj_1(u)$ is the function which is identically equal to 
$$\langle u,1\rangle_\rho= \int_{D} u d \nu(x). $$
On the other hand, thanks to \eqref{convergenceDimension}, it follows that for all large enopugh $n$, we have $\dim(E_1^{(n)})=1$ (note that in particular this means that assymptotcally the graphs are connected regardless of what kernel $\eta$ is being used). Therefore, for large enough $n$, $\Proj_1^{(n)}(u_n)$ is the function which is identically equal to $\langle u_n, 1 \rangle_{\nu_n} $. Proposition \ref{ContinuityInnerProdTL2} implies that $\lim_{n \rightarrow \infty} \langle u_n, 1 \rangle_{\nu_n}  = \langle u, 1 \rangle_{\rho}$ and thus $\Proj_1^{(n)}(u_n) \overset{{TL^2}}{\longrightarrow} \Proj_1(u)$ as desired. The second statement of Theorem \ref{ConvergenceSpectrumTheorem} is trivial in this case since for large enough $n$, the only two eigenvectors of $\mathcal{L}_{n , \veps_n}$ with eigenvalue $\lambda_1^{(n)}=0$ and with $\| \cdot\|_{\nu_n}$-norm equal to one is the function which is identically equal to one or the function that is identically equal to $-1$.

\emph{Inductive Step:}
Now, suppose that the second and third statements of Theorem \ref{ConvergenceSpectrumTheorem} are true for $1, \dots , k-1$. We want to prove the result for $k$.  Let $j \in \left\{ \hat{k}+1, \dots, \hat{k}+ s(k)  \right\}$.
We start by proving the second statement of the theorem. Consider $\left\{  u_{j}^n\right\}_{n \in \N}$ as in the statement. From \eqref{LaplacianAndEnergyProofs} it follows that $G_{n, \veps_n}(u_j^n)=\frac{2 \lambda_j^{(n)}}{n \veps_n^2} $. Now, from Subsection \ref{SecConvEigenvalues}, we know that 
$$ \lim_{n \rightarrow \infty} \frac{2 \lambda_j^{(n)}}{n \veps_n^2} = \sigma_\eta \lambda_j  $$ 
and so in particular we have:
$$  \sup_{n \in \N} G_{n , \veps_n}(u_j^n) < \infty.  $$
Since the norms of the $u_j^n$ are equal to one, the compactness statement from Theorem \ref{DiscreteGamma}, implies that $\left\{ u_{j}^n \right\}_{n \in \N}$ is pre-compact.  We have to prove now that every cluster point of $\left\{ u_{j}^n \right\}_{n \in \N}$ is an eigenfunction of $\mathcal{L}$ with eigenvalue $\lambda_j$. So without the loss of generality let us assume that $u_{j}^n \overset{{TL^2}}{\longrightarrow} u_j$ for some $u_j$. Our goal is to show that $u_j$ is an eigenfunction of $\mathcal{L}$ with eigenvalue $\lambda_j$.

By the induction hypothesis, we have $ \Proj_i ^{(n)}(u_j^n) \overset{{TL^2}}{\longrightarrow} \Proj_i(u_j) $ for every $i =1, \dots, k-1$. On the other hand, since $\Proj_i ^{(n)}(u_j^n)=0$ for every $n \in \N $ and for every $i=1, \dots, k-1$, we conclude that $\Proj_i(u_j)=0$ for all $i=1, \dots, k-1$. A straightforward computation as in the proof of Proposition  \ref{PropoSpectrumL} shows that:
\begin{equation}
G(u_j)= \sum_{i=k}^{\infty} \overline{\lambda}_i \| \Proj_i(u_j)\|_\rho^2\geq \overline{\lambda}_k  \sum_{i=k}^{\infty}  \| \Proj_i(u_j)\|_\rho^2=\overline{\lambda}_k \| u_j\|_\rho^2.
\label{AuxPrueba1}
\end{equation}
In addition, since $\|u_j^n\|_{\nu_n}=1$ for all $n$, we deduce from Proposition \ref{ContinuityInnerProdTL2} that $\|u_j\|_\rho=1$. Thus, 
$$ G(u_j) \geq \overline{\lambda}_k .$$ 
On the other hand, the liminf inequality  from Theorem \ref{DiscreteGamma} implies that:
$$  \sigma_\eta \overline{\lambda}_k= \sigma_\eta \lambda_j = \lim_{n \rightarrow \infty} \frac{2 \lambda_j^{(n)}}{n \veps_n^2}  = \lim_{n \rightarrow \infty} G_{n , \veps_n}(u_n^j) \geq \sigma_\eta G(u^j) \geq \sigma_\eta \overline{\lambda}_k.  $$
Therefore, $G(u_j)= \overline{\lambda}_k$ and from \eqref{AuxPrueba1} we conclude that $\| \Proj_i(u_j)\|_\rho =0$ for all $i \not =k$. Thus, $u_j$ is an eigenfunction of $\mathcal{L}$ with corresponding eigenvalue $\lambda_j$ ($=\overline{\lambda}_k$).

Now we prove the third statement from Theorem \ref{ConvergenceSpectrumTheorem}. Suppose that $u^n  \overset{{TL^2}}{\longrightarrow}u$. We want to show that $\Proj_k^{(n)}(u^n)  \overset{{TL^2}}{\longrightarrow} \Proj_k(u)$. To achieve this we prove that for a given sequence of natural numbers there exists a further subsequence for which the convergence holds. We do not relabel subsequences to avoid cumbersome notation. 

From \eqref{convergenceDimension} it follows that for large enough $n$, $\dim(E_k^{(n)})= s(k)$. Hence, for large enough $n$, we can consider $\left\{u_1^n, \dots, u_{s(k)}^n \right\}$ an orthonormal basis (with respect to the inner product $\langle\cdot, \cdot \rangle_{\nu_n}$) for $E_k^{(n)}$, where $u_j^n$ is an eigenvector of $\mathcal{L}_{n , \veps_n}$ with corresponding eigenvalue $\lambda_{\hat{k} +j}^{(n)}$. Now, by the first part of the proof, for every $j=1 , \dots, s(k)$, the sequence $\left\{ u_j^n \right\}_{n \in \N}$ is pre-compact in $TL^2$. Therefore, passing to a subsequence that we do not relabel we can assume that for every $j =1 , \dots , s(k)$ we have:
\begin{equation}
u_j^n \overset{{TL^2}}{\longrightarrow} u_j, \: \text{as} \: n\rightarrow \infty
\label{auxLastProof2}
\end{equation}
for some $u_j \in L^2(D)$. From  \eqref{ContinuityInnerProdTL2}, the $u_j$ satisfy $\|u_j\|_\rho= 1$ for every $j$ and $\langle u_i , u_j \rangle_\rho=0$ for $i \not = j$. In other words, $\left\{ u_1, \dots , u_{s(k)} \right\}$ is an orthonormal set in $L^2(D)$ (with respect to $\langle \cdot, \cdot\rangle_\rho$). Furthermore,  $u_j \in E_k$ for all $j$ by the first part of the proof. In other words, $\left\{u_1, \dots , u_{s(k)} \right\}$ is an orthonormal basis for $E_k$ and in particular
$$\Proj_k(u)= \sum_{j=1}^{s(k)}\langle u, u_j \rangle_{\rho} u_j.$$
On the other hand, for large enough $n$, we have
$$  \Proj_k^{(n)}(u^n) = \sum_{j=1}^{s(k)}\langle u^n, u_j^n \rangle_{\nu_n} u_j^n.  $$
Finally, the fact that $u^n \overset{{TL^2}}{\longrightarrow} u$ and \eqref{auxLastProof2} combined with  Proposition \ref{ContinuityInnerProdTL2} imply that
$$  \Proj_k^{(n)}(u^n) = \sum_{j=1}^{s(k)}\langle u^n, u_j^n \rangle_{\nu_n} u_j^n \overset{{TL^2}}{\longrightarrow}  \sum_{j=1}^{s(k)}\langle u, u_j \rangle_{\rho} u_j = \Proj_k(u). $$

\subsection{Consistency of spectral clustering} \label{sec:csc}

Here we prove statement 4. of Theorem \ref{ConvergenceSpectrumTheorem}.

The procedure in Algorithm 1, can be reformulated as follows.
Let $\mu_n = (u_1^n, \dots, u_k^n)_\sharp \nu_n$, where 
 $u_1^n, \dots, u_k^n$ are orthonormal eigenvectors of $\mathcal{L}_{n, \veps_n}$ corresponding to
 eigenvalues $\lambda_1^{(n)}, \dots, \lambda_k^{(n)}$, respectively. 
Consider the functional $F_{\mu_n,k}$. Let $\z_n$ be its minimizer, and let 
 $\tilde G^n_1, \dots \tilde G^n_k$ be corresponding clusters.  
  The clusters $G_1, \dots, G_k$ of Algorithm 1 are defined by $G_i = (u_1^n, \dots, u_k^n)^{-1}(\tilde G_i)$.
  
By Theorem \ref{th:km} the sequence $\z_n$ is precompact. By Corollary \ref{cor:km} the sequence of measures $\mu_{n}^i= \mu_n \llcorner_{\tilde G^n_i}$ is precompact for all $i=1, \dots, k$.  Consider a subsequence along which $\mu_{n}^i$ converges for every $i=1, \dots, k$, and denote the limit by $\mu^i$. Since $z_n^i = \dashint y d\mu_n^i(y)$ it follows that $z_n^i$ converge as $n \to \infty$, along the same subsequence.
 By statement 2. of Theorem \ref{ConvergenceSpectrumTheorem} along a further subsequence $(\nu_n, u^n_i)$ converge to $(\nu, u_i)$ in $TL^2$ sense  for all $i=1, \dots, k$ as $n \to \infty$. Furthermore from the definition of $TL^2$ convergence follows that
 measures $\mu_n$ converge in the Wasserstein sense to $ \mu:=(u_1, \dots u_k)_\sharp \nu$. 
 Combined with convergence of $\mu_n^i$ to $\mu^i$ implies, via Lemma \ref{lem:tl_char}, that ($\mu_n, \chi_{\tilde G_i^n})$ converge in $TL^2$ topology to $(\mu, \chi_{\tilde G_i})$.
 Consequently, by Lemma \ref{TL2comp}, 
 $(\nu_n, \chi_{\tilde G_i^n} \circ (u_1^n, \dots, u_k^n))$ converge to 
 $(\nu, \chi_{\tilde G_i} \circ (u_1, \dots, u_k))$ in $TL^2$ topology.
 Noting that $\chi_{G^n_i} =\chi_{\tilde G_i^n} \circ (u_1^n, \dots, u_k^n)$ and
 $\chi_{G_i} = \chi_{\tilde G_i} \circ (u_1, \dots, u_k)$ implies
  that $\nu_n \llcorner_{G^n_i}$ converges weakly to $\nu \llcorner_{G_i}$ as desired.
\nc

%%%%%%%%%%%%%%%%%%%%%%%%%%%%%%%%%%%%%%%%%%%%%%%%%
\section{Convergence of the spectra of normalized graph Laplacians}
\label{SectionGammaNormalized}
We start by proving Theorem \ref{DiscreteGammaNormalized}. Recall that for given $u_n \in L^2(\nu_n)$ 
$$  \overline{G}_{n , \veps_n}(u_n) =   \frac{1}{ n \veps_n^2}  \sum_{i,j} W_{i,j} \left(   \frac{u_n(\x_i)}{\sqrt{\mathcal{D}_{ii}}}   -  \frac{u_n(\x_j)}{\sqrt{\mathcal{D}_{jj}}} \right)^2 ,   $$
where $W_{ij}= \eta_{\veps_n}(\x_i-\x_j) $ and $\mathcal{D}_{ii} =\sum_{k=1}^n\eta_{\veps_n}(\x_i-\x_k)$. With a slight abuse of notation we set 
$$\mathcal{D}(\x_i):= \mathcal{D}_{ii}.$$ 
For $u_n \in L^2(\nu_n)$, define $\bar{u}_n \in L^2(\nu_n)$ by 
\begin{equation}
\bar{u}_n(\x_i) := \frac{u_n(\x_i)}{\sqrt{\mathcal{D}(\x_i)/n }}, \quad i \in \left\{1,\dots,n \right\}.
\label{NormalizedFunctionDiscrete}  
\end{equation}
From the definition of $G_{n, \veps_n}$ and $\overline{G}_{n , \veps_n}$, it follows that $\overline{G}_{n , \veps_n}(u_n) =   G_{n , \veps_n} ( \bar{u}_n) $. Similarly, for every $u \in L^2(D)$ it is true that $ \overline{G}(u) =  G(\frac{u}{ \sqrt{\rho}})$.
\label{RemarkUnnormalizedToNormalized}
%In fact, for $\veps>0$, define the non-local Dirichlet energy $G_\veps:L^2(D) \rightarrow [0, \infty)$ by
%$$  G_\veps(u):= \frac{1}{\veps^2} \int_{D}\int_D \eta_\veps(x-y) (u(x) - u(y))^2 \rho(x)\rho(y) dxdy. $$
%Modifying the proof  of Theorem... in \cite{GammaGraphTV} in a straightforward way, one can obtain the following result.
%
%\begin{proposition}
%Assume that $\eta$ satisfies (K1)-(K3). Then, $G_\veps$, $\Gamma$-converge to $\sigma_\eta G$ as $\veps \rightarrow 0$ with respect to the $L^2(D)$-metric. The surface tension  $\sigma_\eta$ is defined in \ref{sigma_eta} and $G$ is given by \eqref{WeightedDirichlet}.
%Moreover, $\left\{ G_\veps \right\}_{\veps>0}$ satisfies the compactness property, that is, every family of functions $\left\{ u_\veps \right\}_{\veps>0} \subseteq L^2(D)$ bounded in $L^2(D)$ and satisfying
%\begin{equation*}
%\sup_{\veps>0} G_\veps(u_\veps ) < \infty,
%\end{equation*}
%is precompact in $L^2(D)$.
%\end{proposition}
To prove Theorem \ref{DiscreteGammaNormalized} we use the following lemma.
\begin{lemma}
Assume that the sequence $\left\{  \veps_n \right\}_{n \in \N} $ satisfies \eqref{HypothesisEpsilon}. With probability one the following statement holds: a sequence $\left\{u_n \right\}_{n \in \N}$, with $u_n \in L^2(\nu_n)$, converges to $u\in L^2(\rho)$ in the $TL^2$-metric if and only if  $\bar{u}_n  \overset{{TL^2}}{\longrightarrow} \frac{u}{\sqrt{ \beta_\eta \rho}}$, where $\bar{u}_n$ is defined in \eqref{NormalizedFunctionDiscrete} and where $\beta_\eta$ is defined in \eqref{BetaEta}.
\label{LemmaDiscreteGammaNormalized}
\end{lemma}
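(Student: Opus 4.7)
The plan is to show that the normalizer $\rho_n(\x_i) := \mathcal{D}(\x_i)/n = \frac{1}{n}\sum_j \eta_{\veps_n}(\x_i - \x_j)$, acting as a kernel density estimator for $\rho$, converges to $\beta_\eta \rho$ in a sufficiently strong sense, and then to transfer $TL^2$-convergence between $u_n$ and $\bar{u}_n$ by multiplying or dividing by $\sqrt{\rho_n}$. Throughout I fix the transportation maps $\left\{T_n\right\}_{n \in \N}$ from Proposition \ref{thm:InifinityTransportEstimate}, which satisfy $\|\id - T_n\|_\infty / \veps_n \to 0$ almost surely by \eqref{HypothesisEpsilon}. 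The push-forward property $T_{n\sharp}\nu = \nu_n$ together with \eqref{chofvar} gives the key identity
\[ \rho_n(T_n(x)) = \int_D \eta_{\veps_n}(T_n(x) - T_n(z)) \rho(z)\, dz, \qquad x \in D. \]

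First, I establish two properties of $\rho_n \circ T_n$: (a) uniform bounds $0 < c \leq \rho_n \circ T_n \leq C$ on $D$ for all large $n$; and (b) $\rho_n(T_n(x)) \to \beta_\eta \rho(x)$ for a.e.\ $x \in D$. For the upper bound in (a), use $\rho \leq M$ and $\int \eta_{\veps_n}\,dh = \beta_\eta$. For the lower bound, pick $r_0 > 0$ with $\bm{\eta}(r_0) \geq \bm{\eta}(0)/2 > 0$ by (K1)--(K2); then $\eta_{\veps_n}(T_n(x)-T_n(z)) \geq \bm{\eta}(0)/(2\veps_n^d)$ whenever $|T_n(x)-T_n(z)| \leq r_0\veps_n$, and this is assured when $|x-z| \leq r_0\veps_n - 2\|\id - T_n\|_\infty \geq (r_0/2)\veps_n$ for $n$ large. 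The Lipschitz boundary of $D$ yields a uniform lower bound $|B(x, (r_0/2)\veps_n) \cap D| \geq c_0 \veps_n^d$ over $x \in D$, which together with $\rho \geq m$ gives the uniform $c > 0$. For (b), at any interior point $x$ the change of variable $z = x + \veps_n h$, the monotonicity of $\bm{\eta}$, and $\|\id - T_n\|_\infty \ll \veps_n$ reduce the integrand to $\bm{\eta}(|h|)\rho(x)$ in the limit, and dominated convergence (with dominant $\bm{\eta}(0) M \cdot \bm{\eta}(\max(|h|-1,0))$, integrable by (K3)) gives $\rho_n(T_n(x)) \to \rho(x)\int \eta = \beta_\eta \rho(x)$. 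Combining (a) and (b) with DCT on $D$, $\rho_n \circ T_n \to \beta_\eta \rho$ in $L^p(\nu)$ for every $p \in [1,\infty)$, and likewise $\sqrt{\rho_n \circ T_n} \to \sqrt{\beta_\eta \rho}$.

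Second, I derive the equivalence. By Proposition \ref{EquivalenceTLp}, $TL^2$-convergence amounts to $L^2(\nu)$-convergence of $u_n \circ T_n$ and of $\bar{u}_n \circ T_n = (u_n \circ T_n)/\sqrt{\rho_n \circ T_n}$. Splitting
\[ \bar{u}_n \circ T_n - \frac{u}{\sqrt{\beta_\eta \rho}} = \frac{u_n \circ T_n - u}{\sqrt{\rho_n \circ T_n}} + u\left(\frac{1}{\sqrt{\rho_n \circ T_n}} - \frac{1}{\sqrt{\beta_\eta \rho}}\right), \]
the $L^2(\nu)$-norm of the first term is at most $c^{-1/2}\|u_n \circ T_n - u\|_{L^2(\nu)}$, while the second vanishes by dominated convergence since its multiplicative factor is uniformly bounded by Step 1(a) and tends to $0$ a.e.\ by Step 1(b), with $u^2 \in L^1(\nu)$ as dominator. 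This proves the forward direction. The reverse implication is symmetric: $u_n \circ T_n = (\bar{u}_n \circ T_n)\sqrt{\rho_n \circ T_n}$ together with the upper bound $C$ and the $L^2(\nu)$-convergence $\sqrt{\rho_n \circ T_n} \to \sqrt{\beta_\eta \rho}$ gives $u_n \circ T_n \to u$ in $L^2(\nu)$.

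The principal technical obstacle is the uniform lower bound $c > 0$ in Step 1(a). Near $\partial D$ the kernel density $\rho_n \circ T_n$ does not tend to $\beta_\eta \rho$ but to a strictly smaller value reflecting the local volume fraction of $D$ in small balls; only the Lipschitz boundary hypothesis prevents this fraction from collapsing and yields the uniform positivity needed to divide by $\sqrt{\rho_n \circ T_n}$. Consequently $\rho_n \circ T_n$ does \emph{not} converge uniformly to $\beta_\eta \rho$, but the $L^p(\nu)$ convergence furnished by dominated convergence is precisely what is needed to execute the multiplicative comparison in Step 2.
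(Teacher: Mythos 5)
Your proof is correct and follows the same overall architecture as the paper's: fix the transportation maps $T_n$ from Proposition \ref{thm:InifinityTransportEstimate}, use \eqref{chofvar} to write $\mathcal D\circ T_n / n$ as a continuum integral, establish a uniform lower bound on that integral via the Lipschitz boundary of $D$, prove pointwise convergence to $\beta_\eta\rho$, and then pass this through the multiplicative comparison between $u_n\circ T_n$ and $\bar u_n\circ T_n$ in $L^2(\nu)$. The place where your implementation differs is in the proof of the pointwise limit: the paper sandwiches the kernel between modified profiles $\underline{\bm\eta}^\alpha$, $\overline{\bm\eta}^\alpha$ (flattened on $[0,2\alpha]$), derives two--sided bounds involving perturbed radii $\hat\veps_n$, $\tilde\veps_n$, and sends $\alpha\to 0$ at the end; you instead change variables to $h=(z-x)/\veps_n$, observe that the argument of $\eta$ converges to $-h$ and that, by monotonicity, $\bm\eta$ is continuous at $|h|$ for Lebesgue--a.e.\ $h$, and then apply dominated convergence in $h$ with dominant $M\,\bm\eta(\max(|h|-1,0))$. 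Your route is arguably a bit more direct and avoids introducing the auxiliary parameter $\alpha$; the paper's kernel--sandwich has the advantage that it reuses the same inequalities already set up for the $\Gamma$-convergence proofs elsewhere in the paper. Both deliver the same intermediate statement, and your Step 2 decomposition into the two error terms (one controlled by the uniform lower bound $c$, the other by dominated convergence with $u^2$ as dominator) is a sound and explicit way to finish what the paper leaves as a short remark.

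Two very small cosmetic points, neither affecting correctness: the stated dominant $\bm\eta(0)\,M\,\bm\eta(\max(|h|-1,0))$ carries a superfluous (and, if $\bm\eta(0)<1$, undersized) prefactor --- the correct dominant is simply $M\,\bm\eta(\max(|h|-1,0))$, which is integrable by (K2)--(K3); and your closing remark correctly emphasizes that $\rho_n\circ T_n$ does not converge uniformly near $\partial D$, which is compatible with the pointwise (at each fixed $x\in D$) convergence you proved --- a reader might want the $n$- vs $x$-uniformity distinction spelled out slightly more explicitly there.
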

\begin{proof}
We prove that $u_n \overset{{TL^2}}{\longrightarrow} u$ implies $\bar{u}_n  \overset{{TL^2}}{\longrightarrow} \frac{u}{\sqrt{ \beta_\eta \rho}}$; the converse implication is obtained similarly. Let $\left\{ T_n \right\}_{n \in \N}$ be the transportation maps from Proposition \ref{thm:InifinityTransportEstimate}, which we know exist with probability one. Using the change of variables \eqref{chofvar} we obtain
$$  \frac{\mathcal{D}(X_i)}{ n } =  \int_{D} \eta_{\veps_n}(\x_i- T_n(y)) \rho(y) dy . $$
If $u_n \overset{{TL^2}}{\longrightarrow} u$, in particular from Proposition \ref{EquivalenceTLp} we have $u_n \circ T_n \overset{{L^2(\rho)}}{\longrightarrow} u$. By Proposition \ref{EquivalenceTLp}, in order to prove that $\bar{u}_n \overset{{TL^2}}{\longrightarrow} \frac{u}{\sqrt{ \beta_\eta \rho}}$, it is enough to prove that $\bar{u}_n \circ T_n \overset{{L^2(\rho)}}{\longrightarrow} \frac{u}{\sqrt{ \beta_\eta \rho}}$, which in turn is equivalent to $\bar{u}_n \circ T_n \rightarrow _{L^2(D)}\frac{u}{\sqrt{ \beta_\eta \rho}}$ due to the fact that $\rho$ satisfies \eqref{DensityBound}.
%
%If we define $d_{n}(x) :=  \int_{D} \eta_{\veps_n}(T_n(x)- T_n(y)) \rho(y) dy$ we see that $\bar{u}_n \circ T_n = \frac{u_n\circ T_n(x)}{\sqrt{d_n(x)}}$.
To achieve this, we first find an $L^\infty$-control on  the terms $\frac{1}{\sqrt{\mathcal{D}\circ T_n /n }}$ and then prove that $\frac{1}{\sqrt{\mathcal{D}\circ T_n /n }}$ converges point-wise to $\frac{1}{\sqrt{\beta_\eta \rho}}$. Since $u_n \circ T_n \overset{{L^2(D)}}{\longrightarrow} u$ this is enough to obtain the desired result. 
For that purpose, we fix an arbitrary $\alpha>0$ and define $\underline{\bm{ \eta}}^\alpha: [0, \infty) \rightarrow [0, \infty)$ and $\overline{\bm{\eta}}^\alpha : [0, \infty) \rightarrow [0, \infty)$ to be
\begin{equation} \label{HatEtaAlpha}
\underline{\bm{ \eta}}^\alpha(t) := 
\begin{cases}
 \bm{\eta}(t), & \te{ if }   t>2 \alpha \\
 \bm{\eta}(2 \alpha), & \te{ if }  t \leq 2 \alpha,
\end{cases}
\end{equation}
and
\begin{equation} \label{TildeEtaAlpha}
\overline{\bm{\eta}}^\alpha(t) := 
\begin{cases}
 \bm{\eta}(t),  \; \te{ if }   t>2 \alpha \\
 \bm{\eta}(0), \:  \te{ if }  t \leq 2 \alpha,
\end{cases}
\end{equation}
where we recall that $\bm{\eta}$ is the radial profile of the kernel $\eta$. We let $\underline{\eta}^\alpha$ and $\overline{\eta}^\alpha$ be the isotropic kernels whose radial profiles are $\underline{\bm{\eta}}^\alpha$  and $\overline{\bm{\eta}}^\alpha$ respectively. Note that thanks to assumption (K2) on $\bm{\eta}$, we have $\underline{\eta}^\alpha  \leq \eta \leq \overline{\eta}^\alpha$. 
Set
$$\hat{\veps}_n:=  \veps_n -  \frac{2 \|Id - T_n\|_{\infty}}{\alpha}, $$
$$\tilde{\veps}_n:=  \veps_n + \frac{2 \|Id - T_n\|_{\infty}}{\alpha}. $$
Note that thanks to the assumptions on $\veps_n$ and the properties of the maps $T_n$, for large enough $n$, $\hat{\veps}_n >0$,   $\frac{\hat{\veps}_n}{\veps_n} \rightarrow 1$ and $\frac{\tilde{\veps}_n}{\veps_n} \rightarrow 1$ as $n \rightarrow \infty$. In addition, from assumption (K2) on $\bm{\eta}$  and the definitions of $\underline{\bm{\eta}}^\alpha$, $\overline{\bm{\eta}}^\alpha$, $\hat{\veps}_n$ and $\tilde{\veps}_n$, it is straightforward to check that for large enough $n$ and for Lebesgue almost every $x,y\in D$, 
$$   \eta \left( \frac{T_n(x) -T_n(y)}{\veps_n} \right) \geq \underline{\eta}^\alpha \left(\frac{x-y}{\hat{\veps}_n} \right) ,  $$
and
$$   \eta \left( \frac{T_n(x) -T_n(y)}{\veps_n} \right) \leq \overline{\eta}^\alpha \left(\frac{x-y}{\tilde{\veps}_n} \right).  $$
From these inequalities, we conclude that for large enough $n$ and Lebesgue almost every $x \in D$
\begin{equation}
\int_{D} \eta_{\veps_n}( T_n(x) - T_n(y) ) \rho(y) dy   \geq  \left(\frac{\hat{\veps}_n}{ \veps_n} \right)^{d} \int_{D} \underline{\eta}^\alpha_{\hat{\veps}_n}(x-y ) \rho(y) dy 
\label{auxGammaNormalized0}
\end{equation}
and
\begin{equation}
\int_{D} \eta_{\veps_n}( T_n(x) - T_n(y) ) \rho(y) dy  \leq  \left(\frac{\tilde{\veps}_n}{ \veps_n} \right)^{d} \int_{D} \overline{\eta}^\alpha_{\tilde{\veps}_n}(x-y ) \rho(y) dy.
\label{auxGammaNormalized01}
\end{equation}
Given that $D$ is assumed to be a bounded open set with Lipschitz boundary, it is straightforward to check that exists a ball $B(0, \theta)$, a cone $C$ with nonempty interior and a family of rotations $\left\{ R_x \right\}_{x \in D}$ with the property that for every $x \in D$ it is true that $x+R_x(B(0,\theta) \cap C ) \subseteq D$. For large enough $n$ ( so that $1>\hat{\veps}_n>0$ ), and for almost every $x \in D$ we have:
\begin{align*}
\begin{split}
\int_{D} \underline{\eta}^\alpha_{\hat{\veps}_n}(x-y ) \rho(y) dy    \geq m \int_{D}  \underline{\eta}^\alpha_{\hat{\veps}_n}(x-y ) dy & = m \int_{x + \hat{\veps}_n h \in D }   \underline{\eta}^\alpha(h) dh \\
& \geq m \int_{R_x(B(0,\theta) \cap C)} \underline{\eta}^\alpha(h) dh 
= m \int_{B(0,\theta) \cap C} \underline{\eta}^\alpha(h) dh >0, 
\end{split}
\end{align*}
where in the first inequality we used assumption \eqref{DensityBound} on $\rho$, and we used the change of variables $h= \frac{x-y}{\hat{\veps}_n}$ to deduce the first equality; to obtain the last equality we used the fact that $\underline{\eta}^\alpha$ is radially symmetric. From the previous chain of inequalities and from \eqref{auxGammaNormalized0} we conclude that for large enough $n$ and for almost every $x \in D$ we have
\begin{equation*}
 \int_{D} \eta_{\veps_n}( T_n(x) - T_n(y) ) \rho(y) dy \geq b >0   
 \label{auxGammaNormalized1}
 \end{equation*}
for some positive constant $b$. Form the previous inequality we obtain the desired $L^\infty$-control on the terms $\frac{1}{\sqrt{\mathcal{D}\circ T_n/n}}$. 
It remains to show that for almost every $x \in D$,
\begin{equation}
 \lim_{n \rightarrow \infty} \int_{D} \eta_{\veps_n}( T_n(x) - T_n(y) ) \rho(y) dy = \beta_\eta \rho(x).
 \label{auxGammaNormalized4}
\end{equation}
For this purpose, we use the continuity of $\rho$ to deduce that for every $x \in D$,
\begin{equation}
\lim_{n \rightarrow \infty} \left| \underline{\beta}_{\alpha}  \rho(x) -   \int_{D} \underline{\eta}^{\alpha}_{\hat{\veps}_n}(x-y ) \rho(y) dy \right| =0,
\label{auxGammaNormalized20}
\end{equation}
where $\underline{\beta}_\alpha = \int_{\R^d}  \underline{\eta}^{\alpha}(h)dh $. Similarly, for every $x \in D$,
\begin{equation}
\lim_{n \rightarrow \infty}  \left| \overline{\beta}_{\alpha}  \rho(x) -   \int_{D} \overline{\eta}^{\alpha}_{\hat{\veps}_n}(x-y ) \rho(y) dy \right| =0,
\label{auxGammaNormalized30}
\end{equation}
where $\overline{\beta}_\alpha = \int_{\R^d}  \overline{\eta}^{\alpha}(h)dh $. From \eqref{auxGammaNormalized0}, we deduce that for large enough $n $, and for almost every $x \in D$,  
%Now, $u_n\circ T_n \overset{{L^2(D)}}{\longrightarrow} u $ implies that $\left\{ u_n \circ T_n \right\}_{n \in \N}$ has uniformly integrable second moments. This together with \eqref{auxGammaNormalized1} allows us to conclude that $\left\{  \bar{u}_n \circ T_n \right\}_{n \in \N}$ has uniformly integrable second moments. Thus, to prove that $\bar{u}_n \circ T_n \overset{{L^2(D)}}{\longrightarrow} \frac{u}{ \sqrt{\beta_\eta \rho}} $ , it is enough to show that the convergence holds in $L^2(D')$ for every $D'$ open set compactly contained in $D$. Hence, we fix $D' \subset \subset D$.  
\begin{align*}
\beta_\eta \rho(x) -   \int_{D} \eta_{\veps_n}( T_n(x) - T_n(y) ) \rho(y) dy 
%&  \leq  \beta_\eta \rho(x) - \left(\frac{\hat{\veps}_n}{ \veps_n} \right)^{d} \int_{D} \underline{\eta}^\alpha_{\hat{\veps}_n}(x-y ) \rho(y) dy 
% \\ & \leq \left( 1-  \left(\frac{\hat{\veps}_n}{ \veps_n} \right)^{d}  \right)\beta_\eta \rho(x) + \left(\frac{\hat{\veps}_n}{ \veps_n} \right)^{d}\left( \beta_\eta \rho(x) -  \int_{D} \underline{\eta}^\alpha_{\hat{\veps}_n}(x-y ) \rho(y) dy    \right)
 \leq & \beta_\eta \rho(x) - \left(\frac{\hat{\veps}_n}{ \veps_n} \right)^{d} \int_{D} \underline{\eta}^\alpha_{\hat{\veps}_n}(x-y ) \rho(y) dy 
\\
 \leq & \left(\frac{\hat{\veps}_n}{ \veps_n} \right)^{d}\left( \beta_\eta \rho(x) -  \underline{\beta}_\alpha \rho(x)  + \underline{\beta}_\alpha\rho(x) -    \int_{D} \underline{\eta}^\alpha_{\hat{\veps}_n}(x-y ) \rho(y) dy    \right) 
\\& +  \left( 1-  \left(\frac{\hat{\veps}_n}{ \veps_n} \right)^{d}  \right)\beta_\eta \rho(x). \end{align*}
Analogously, from \eqref{auxGammaNormalized01}, for almost every $x \in D$,
\begin{align*}
 \int_{D} \eta_{\veps_n}( T_n(x) - T_n(y) ) \rho(y) dy - \beta_\eta \rho(x) 
%&  \leq  \beta_\eta \rho(x) - \left(\frac{\hat{\veps}_n}{ \veps_n} \right)^{d} \int_{D} \underline{\eta}^\alpha_{\hat{\veps}_n}(x-y ) \rho(y) dy 
% \\ & \leq \left( 1-  \left(\frac{\hat{\veps}_n}{ \veps_n} \right)^{d}  \right)\beta_\eta \rho(x) + \left(\frac{\hat{\veps}_n}{ \veps_n} \right)^{d}\left( \beta_\eta \rho(x) -  \int_{D} \underline{\eta}^\alpha_{\hat{\veps}_n}(x-y ) \rho(y) dy    \right)
 \leq & \left(\frac{\tilde{\veps}_n}{ \veps_n} \right)^{d}\left(  \overline{\beta}_\alpha \rho(x)  - \beta_\eta \rho(x)  +  \int_{D} \overline{\eta}^\alpha_{\tilde{\veps}_n}(x-y ) \rho(y) dy  - \overline{\beta}_\alpha\rho(x)   \right) 
\\& +  \left(   \left(\frac{\tilde{\veps}_n}{ \veps_n} \right)^{d} -1  \right)\beta_\eta \rho(x).
\end{align*}
From these previous inequalities, \eqref{auxGammaNormalized20} and \eqref{auxGammaNormalized30} we conclude that for almost every $x\in D$,
\begin{equation*}
\limsup_{n \rightarrow \infty}   \left|   \beta_\eta \rho(x) -  \int_{D} \eta_{\veps_n}( T_n(x) - T_n(y) ) \rho(y) dy    \right| \leq \rho(x)( \overline{\beta}_{\alpha}   - \underline{\beta}_\alpha ).
\end{equation*}
Finally, given that $\alpha$ was arbitrary we can take $\alpha \rightarrow 0$ in the previous expression to deduce that the left hand side of the previous expression is actually equal to zero. This establishes \eqref{auxGammaNormalized4} and thus the desired result.
\end{proof}
The proof of Theorem \ref{DiscreteGammaNormalized} is now straightforward.
\begin{proof}[Proof of Theorem \ref{DiscreteGammaNormalized}]
\textbf{Liminf inequality:} Let $u \in L^2(D)$ and suppose that $\left\{u_n \right\}_{n \in \N}$, $u_n \in L^2(\nu_n)$, is such that $u_n \overset{{TL^2}}{\longrightarrow} u $. From Lemma \ref{LemmaDiscreteGammaNormalized}, we know that $\bar{u}_n \overset{{TL^2}}{\longrightarrow}  \frac{u}{\sqrt{\beta_\eta \rho}}$, where $\bar{u}_n$ was defined in \eqref{NormalizedFunctionDiscrete}. From Theorem \ref{DiscreteGamma} and the discussion at the beginning of this section, we obtain
\begin{equation*}
\liminf_{n \rightarrow \infty} \overline{G}_{n , \veps_n}(u_n) = \liminf_{n \rightarrow \infty}G_{n , \veps_n}\left( \bar{u}_n \right) \geq \sigma_\eta G\left(\frac{u}{\sqrt{\beta_\eta \rho }}\right) = \frac{\sigma_\eta}{\beta_\eta} \overline{G}(u),
\end{equation*}
where the inequality is obtained using the liminf inequality from Theorem \ref{DiscreteGamma}.

\textbf{Limsup inequality:} Let $u \in L^2(D)$. Since $\rho$ is bounded below by a positive constant, $\frac{u}{\sqrt{\beta_\eta \rho} }$ belongs to $L^2(D)$ as well. From the limsup inequality in Theorem \ref{DiscreteGamma}, there exists a sequence $\left\{  v_n \right\}_{n \in \N} $, $v_n \in L^2(\nu_n)$, with  $v_n \overset{{TL^2}}{\longrightarrow} \frac{u}{\sqrt{\beta_\eta \rho}}$ and such that
$$\limsup_{n \rightarrow \infty} G_{n , \veps_n}( v_n) \leq \sigma_\eta G\left(\frac{u}{\sqrt{\beta_\eta \rho}}\right) = \frac{\sigma_\eta}{\beta_\eta} \overline{G}(u).$$
Let us consider the function $u_n \in L^2(\nu_n)$ given by  $u_n(\x_i) :=  v_n(\x_i) \sqrt{\mathcal{D}(\x_i)/n }  $ for $i =1 , \dots, n$. Lemma \ref{LemmaDiscreteGammaNormalized} implies that $u_n \overset{{TL^2}}{\longrightarrow} u$. From the discussion at the beginning of this section we obtain
\begin{equation*}
\limsup_{n \rightarrow \infty} \overline{G}_{n , \veps_n}(u_n) = \limsup_{n \rightarrow \infty} G_{n , \veps_n}(v_n) \leq \frac{\sigma_\eta}{\beta_\eta}\overline{G}(u).
\end{equation*}
\textbf{Compactness:} Suppose that $\left\{ u_n  \right\}_{n \in \N}$, $u_n \in L^2(\nu_n)$, is such that 
\begin{equation*}
\sup_{n \in \N} \|u_n\|_{\nu_n}   < \infty, \quad
\sup_{n \in \N} \overline{G}_{n , \veps_n}(u_n) < \infty.
\end{equation*}
From the discussion at the beginning of the section, we deduce that $\sup_{n \in \N} G_{n , \veps_n}(\bar{u}_n) < \infty$ . Also, from the proof of Lemma \ref{LemmaDiscreteGammaNormalized}, the terms $\frac{1}{\sqrt{\mathcal{D}\circ T_n /n} }$ are uniformly bounded in $L^\infty$. This implies that $\sup_{n \in \N} \| \bar{u}_n\|_{L^2(\nu_n)}   < \infty$ as well. Hence, we can apply the compactness property from Theorem \ref{DiscreteGamma} to conclude that $\left\{ \bar{u}_n \right\}_{n \in \N}$ is precompact in $TL^2$. Using Lemma \ref{LemmaDiscreteGammaNormalized}, this implies that $\left\{ u_n \right\}_{n \in \N}$ is precompact in $TL^2$ as well.
\end{proof}

\begin{proof}[Proof of Theorem \ref{ConvergenceSpectrumTheoremNormalized}]
Using Theorem \ref{DiscreteGammaNormalized}, similar arguments to the ones used in the proof of Theorem \ref{ConvergenceSpectrumTheorem} can be used to establish statements 1., 2., and 3. of Theorem \ref{ConvergenceSpectrumTheoremNormalized}.

The proof of statement 4. (consistency of spectral clustering) of Theorem \ref{ConvergenceSpectrumTheoremNormalized} is analogous to the proof of the statement 4. of Theorem \ref{ConvergenceSpectrumTheorem} which is given in Subsection 
\ref{sec:csc}. The reason that the normalization step does not create new difficulties is the following: since the
eigenvectors $\uu^n := (u_1^n, \dots, u_k^n)$ of $\mathcal{N}^{sym}_{n,\veps_n}$ converge in $TL^2$ to eigenfunctions $\uu= (u_1^n, \dots, u_k^n)$ of $\mathcal{N}^{sym}$ along a subsequence, it can be shown that the normalized vectors  $\uu^n / \| \uu^n \|$  converge to 
$\uu / \| \uu \|$ in $TL^2$
provided that the set of $x \in D$ for  which $\uu =0$ is of $\nu$-measure zero. 
 
In fact, assuming that $\nu(\{ x \in D \: : \: \uu(x) = 0\}) =0$ let us show the $TL^2$ convergence. 
 From the assumption on the set of zeroes of $\uu$ follows that  $\lim_{H \to 0^+} \nu(\{ \|\uu(x)\| < H\}) = 0$. Let $U_H = \{(x,y) \in D \times D \::\: \|\uu(x)\| < H \}$.
 Given $n \in \N$ let $\pi_n \in \Pi(\nu_n, \nu)$ be such that 
 \[  \iint |x-y|^2 + \| \uu^n(x) - \uu(y) \|^2 d\pi_n(x,y) \leq 2 d_{TL^2}^2(\uu^n, \uu) . \]
 Then for any $H>0$
 \begin{align*}
 d_{TL^2}\left( \frac{\uu^n}{\|\uu^n\|}, \frac{\uu}{\|\uu\|}  \right) \leq & \iint |x-y|^2 d\pi_n(x,y) +
 \iint_{U_H} 2^2 d \pi_n(x,y)  \\
& + \iint_{D \times D \backslash U_H} \frac{ \lvert \lvert \| \uu^n(y) \| \uu(x) \pm \|\uu^n(y)\| \uu^n(y) - \|\uu(x)\| \uu^n(y)  \rvert \rvert ^2}{\|\uu(x) \|^2 \, \|\uu_n(y)\| ^2} d\pi_n(x,y) \\
\leq & 4  d_{TL^2}^2(\uu^n, \uu) + o(H) + \frac{16}{H^2} d_{TL^2}^2(\uu^n, \uu). 
 \end{align*}
The right hand side can be made arbitrarily small by first picking $H$ small enough and then $n$ 
large enough along the subsequence where $\uu^n$  converges to $\uu$. The convergence of normalized eigenvector $k$-tupples follows.

To show that $\nu(\{ x \in D \: : \: \uu(x) = 0\}) =0$ it suffices to show
 that the set of $x \in D$ for which $u^1(x)=0$ has zero Lebesgue measure. 
To show this, we need the extra technical condition that $\rho \in C^1(D)$. Because of it and the fact that $\rho$ is bounded away from zero, it follows from the regularity theory of elliptic PDEs, that the function $w_1 := \frac{u_1}{\sqrt{\rho}}$ is of class $C^{1, \alpha}(D)$ (for $\alpha \in (0,1)$) and is a solution of 
$$  - \divergence(\rho^2 \nabla w_1 ) - \tau_1 \rho^2 w_1 =0, \quad \forall x \in D.   $$ 
Consider the sets
\[ 
 N(w_1)  := \left\{  x \in D \: : \: w_1(x)= 0    \right\}  \qquad
 S(w_1) := \left\{ x \in N(w_1) \: : \: \nabla w_1(x)=0 \right\}. 
\] 
By the implicit function theorem, it follows that $N(w_1)\setminus S(w_1)$ can be covered by at most countable $d-1$ dimensional manifolds and hence it follows that the Lebesgue measure of $N(w_1)\setminus S(w_1)$ is equal to zero. On the other hand, it follows from the results in \cite{QHan}, that $S(w_1)$ is $(d-2)$-rectifiable, which in particular implies that the Lebesgue measure of $S(w_1)$ is equal to zero. Since $u_1^{-1}(\left\{0 \right\}) = N(w_1)$, we conclude that the set in which $u_1$ is equal to zero has zero Lebesgue measure.
\nc
\end{proof}

\begin{proof}[Proof of Corollary \ref{ConvergenceSpectrumTheoremNormalizedRW}] Given a sequence $\left\{ u_k^n \right\}_{n \in \N}$,  as in the statement of the corollary, we define
$$  w_k^n:= \mathcal{D}^{1/2} u_k^n. $$
From \eqref{ConditionNsymNrw} it follows that $w_k^n$ is an eigenvector of $\mathcal{N}^{sym}_{n,\veps_n}$. We consider a rescaled version of the vectors $w_k^n$, by setting
$$ \tilde{w}_k^n := \frac{w_k^n}{\sqrt{n}}= \frac{1}{\sqrt{n}} \mathcal{D}^{1/2} u_k^n. $$
From the proof of Lemma \ref{LemmaDiscreteGammaNormalized}, it follows that 
$$ \sup_{n \in \N} \| \tilde{w}_k^n \|_{\nu_n}  < \infty.$$
Thus, from Theorem \ref{ConvergenceSpectrumTheoremNormalized}, up to subsequence,
$$ \tilde{w}_{k}^n \overset{TL^2}{\longrightarrow} w, $$
for some $w \in L^2(D)$ which is an eigenfunction of $\mathcal{N}^{sym}$ with eigenvalue $\tau_k$. Hence, up to subsequence, from Lemma \ref{LemmaDiscreteGammaNormalized} it follows that
$$  u_k^n \overset{TL^2}{\longrightarrow} \frac{w}{\sqrt{\beta_\eta} \rho}. $$
By discussion of Subsection \ref{SectionSpectrumL}, it follows that $\frac{w}{\sqrt{\beta_\eta} \rho}$ is an eigenfunction of $\mathcal{N}^{rw}$ with eigenvalue $\tau_k$. 

The proof of convergence of clusters is the same as given in the proof of Theorem \ref{ConvergenceSpectrumTheorem} presented in Subsection \ref{sec:csc}.
\end{proof}

\bigskip
\noindent {\bf Acknowledgments.}
DS is grateful to  NSF (grant DMS-1211760) for its support. The authors
are thankful to Moritz Gerlach,  Matthias Hein, Thomas Laurent,  James
von Brecht, and Ulrike von Luxburg,
  for enlightening conversations. The authors would like to thank the
Center for Nonlinear Analysis of the Carnegie Mellon University for its
support. Furthermore they are thankful to ICERM, where part of the work
was done, for hospitality.

\bibliography{Biblio}
\bibliographystyle{siam}

\end{document}